\let\oldmarginpar\marginpar
\renewcommand\marginpar[1]{\-\oldmarginpar[\raggedleft\footnotesize #1]%
{\raggedright\footnotesize #1}}
\renewcommand{\eprint}[1]{\href{https://arxiv.org/abs/#1}{#1}}
\DeclareMathOperator{\GL}{\mathrm{GL}}
\DeclareMathOperator{\SL}{\mathrm{SL}}
\DeclareMathOperator{\diag}{diag}
\DeclareMathOperator{\Lie}{Lie}
\DeclareMathOperator{\Tr}{Tr}
\newcommand{\fg}{\frak{g}}
\newcommand{\blam}{\bm{\lambda}}
\newcommand{\lam}{\lambda}
\newcommand{\barq}{\bar{q}}
\newtheorem{Thm}{Theorem}[section]
\newtheorem{Lem}[Thm]{Lemma}
\newtheorem{Prop}[Thm]{Proposition}
\newtheorem{Cor}[Thm]{Corollary}
\theoremstyle{definition}
\newtheorem{Def}[Thm]{Definition}
\newtheorem*{thm:mainthm}{Theorem \ref{thm:mainthm}}
\theoremstyle{remark}
\newtheorem{Rem}[Thm]{Remark}
\newtheoremstyle{named}{}{}{\itshape}{}{\bfseries}{.}{.5em}{#1 #3}
\theoremstyle{named}
\def\C{\mathbb{C}}
\def\P{\mathbb{P}}
\def\fb{\mathfrak{b}}
\def\fh{\mathfrak{h}}
\def\g{\mathfrak{g}}
\def\Frenkel:2013uda{\mathfrak{h}}
\def\cF{\mathcal{F}}
\def\cL{\mathcal{L}}
\def\cV{\mathcal{V}}
\def\cW{\mathcal{W}}
\def\a{\alpha}
\def\bo{\textbf{o}}
\def\=>{\Longrightarrow}
\def\iff{\Longleftrightarrow}
\def\to{\longrightarrow}
\def\o+{\oplus}
\def\bo+{\bigoplus}
\def\<{\langle}
\def\>{\rangle}
\def\({\left(}
\def\){\right)}
\def\^{\wedge}
\def\+{\dagger}
\def\dd[#1,#2]{\frac{d#1}{d#2}}
\def\del[#1,#2]{\frac{\partial #1}{\partial #2}}
\def\over[#1]{\overline{#1}}
\def\vec[#1]{\overrightarrow{#1}}
\def\mr@ignsp#1 {\ifx\:#1\@empty\else #1\expandafter\mr@ignsp\fi}%
\newcommand{\multiref}[1]{\begingroup
\xdef\mr@no@sparg{\expandafter\mr@ignsp#1 \: }%
\def\mr@comma{}%
\@for\mr@refs:=\mr@no@sparg\do{\mr@comma\def\mr@comma{,}\ref{\mr@refs}}%
\endgroup}
\newcommand{\hypref}[2]{\ifx\href\asklFrenkel:2013udaas #2\else\href{#1}{#2}\fi}
\tikzset{->-/.style={decoration={
  markings,
  mark=at position .5 with {\arrow{latex}}},postaction={decorate}}}
\tikzset{
    >=latex
    }
\newcommand{\wt}{\widetilde}
\newcommand{\nc}{\newcommand}
\nc{\on}{\operatorname}
\nc{\la}{\lambda}
\nc{\wh}{\widehat}
\nc{\ghat}{\wh\g}
\nc{\mb}{\mathbf}
\newcommand{\cha}{\check{\alpha}}
\nc{\tq}{\tilde{q}}
\begin{document}
\title{Opers on the projective line, Wronskian relations, and the Bethe Ansatz}

\author[T.J. Brinson]{Ty J. Brinson}
\address{
          Department of Mathematics, 
          Louisiana State University, 
          Baton Rouge, LA 70803, USA}

\author[D.S. Sage]{Daniel S. Sage}

\author[A.M. Zeitlin]{Anton M. Zeitlin}

\numberwithin{equation}{section}

\begin{abstract}
  It is well-known that the spectra of the Gaudin model may be
  described in terms of solutions of the Bethe Ansatz equations.  A
  conceptual explanation for the appearance of the Bethe Ansatz
  equations is provided by appropriate $G$-opers: $G$-connections on
  the projective line with extra structure.  In fact, solutions of the
  Bethe Ansatz equations are parameterized by an enhanced version of
  opers called Miura opers; here, the opers appearing have only
  regular singularities.  Feigin, Frenkel, Rybnikov, and Toledano
  Laredo have introduced an inhomogeneous version of the Gaudin model;
  this model incorporates an additional twist factor, which is an
  element of the Lie algebra of $G$.  They exhibited the Bethe Ansatz
  equations for this model and gave a geometric interpretation of the
  spectra in terms of opers with an irregular singularity.  In this
  paper, we consider a new approach to the study of the spectra of the
  inhomogeneous Gaudin model in terms of a further enhancement of
  opers called twisted Miura-Pl\"ucker opers and a certain system of
  nonlinear differential equations called the $qq$-system.  We show
  that there is a close relationship between solutions of the
  inhomogeneous Bethe Ansatz equations and polynomial solutions of the
  $qq$-system and use this fact to construct a bijection between the
  set of solutions of the inhomogeneous Bethe Ansatz equations and the
  set of nondegenerate twisted Miura-Pl\"ucker opers.  We further
  prove that as long as certain combinatorial conditions are
  satisfied, nondegenerate twisted Miura-Pl\"ucker opers are in fact
  Miura opers.


\end{abstract}

\maketitle

\setcounter{tocdepth}{1}
\tableofcontents


\section{Introduction}

The Bethe Ansatz is a classical approach to computing the spectra of
various quantum integrable systems, and in particular, spin chain
models.  This method is often very effective, but it is less easy to
understand conceptually the reason for this effectiveness.   The
Gaudin model is one context in which such an explanation is known.

Let $\fg$ be a simple complex Lie algebra with universal enveloping
algebra $U(\fg)$ and Langlands dual algebra ${}^L\fg$. In the Gaudin
model for $\fg$, one considers a family of mutually commuting elements
in $U(\fg)^{\otimes N}$ called Gaudin Hamiltonians, which depend
on a collection of distinct complex numbers $z_1,\dots,z_N$.  The
Bethe Ansatz provides a methods of constructing simultaneous
eigenvectors of the Gaudin Hamiltonians on modules such as
$V_{\blam}=\bigotimes_{i=1}^N V_{\lam_i}$, where $V_{\lam}$ is the
irreducible highest-weight module corresponding to the dominant
integral weight $\lam$.  One starts with the unique (up to scalar)
vector $|0\rangle\in V_{\blam}$ of highest weight $\sum\lam_i$; it is
a simultaneous eigenvector.  Given a set of distinct complex numbers
$w_1,\dots,w_m$ labeled by simple roots $\alpha_{k_j}$ (defined in
terms of fixed Cartan and Borel subalgebras $\mathfrak{h}\subset\fb_+$), one then
applies a certain order $m$ lowering operator with poles at the
$w_j$'s to $|0\rangle$.  If $\sum\lam_i-\sum \alpha_{k_j}$ is
dominant, then this vector is a highest weight vector in $V_{\blam}$
(and a simultaneous eigenvector of the Gaudin Hamiltonians) 
if and only if the Bethe Ansatz equations are satisfied:
\begin{equation}\label{e:Gaudinbethe} \sum_{i=1}^N \frac{\langle
    \lam_i,\cha_{k_j}\rangle}{w_j-z_i}- \sum_{s\ne j} \frac{\langle
    \a_{k_s},\cha_{k_j}\rangle}{w_j-w_s}=0, \quad j=1,\dots,m.
\end{equation}

In a series of papers~\cite{Feigin:1994in,Frenkel:2003qx,
  Frenkel:2004qy} , Frenkel and his collaborators provided a
conceptual explanation for this result.  They showed that the spectra
of the Gaudin model is encoded by certain connections with extra
structure associated to ${}^L\fg$ called \emph{opers}.  The opers
appearing here have regular singularities at $z_1,\dots,z_N$ and
$\infty$ and have trivial monodromy~\cite{Feigin:1994in,
  Frenkel:2004qy}.  These opers also have apparent singularities at
the $w_j$'s, and the Bethe Ansatz equations are precisely the
conditions for these singularities to be removable.  Moreover, this
approach allows one to give geometric meaning to solutions of the
Bethe Ansatz equations without assuming that
$\sum\lam_i-\sum \alpha_{k_j}$ is dominant.  In fact, they correspond
bijectively to enhanced versions of opers called (nondegenerate) Miura
opers.

More recently, Feigin, Frenkel, Rybnikov, and Toledano Laredo have
introduced an ``inhomogeneous'' version of the Gaudin
model~\cite{Feigin:2006xs,Rybnikov:2010} which involves an extra
``twist parameter'' $\chi\in\fh^*$.  In these papers, the authors have
given a similar geometric interpretation of the spectra in terms of
opers, but here, the regular singularity at $\infty$ is replaced by a
double pole with ``2-residue'' $-\chi$.  Moreover, they have shown
that the Bethe Ansatz equations for this model are given by
\begin{equation}\label{e:Gaudintwistbethe} \sum_{i=1}^N \frac{\langle
    \lam_i,\cha_{k_j}\rangle}{w_j-z_i}- \sum_{s\ne j} \frac{\langle
    \a_{k_s},\cha_{k_j}\rangle}{w_j-w_s}=\langle \chi,\cha_{k_j}\rangle, \quad j=1,\dots,m.
\end{equation}

In this paper, we consider a new approach to the study of the spectra
of the inhomogeneous Gaudin model in terms of \emph{twisted Miura opers}
and a certain system of nonlinear differential equations called the
\emph{$qq$-system}.  As we will see, there is a close relationship
between solutions of the inhomogeneous Bethe Ansatz equations
\eqref{e:Gaudintwistbethe} and polynomial solutions of the
$qq$-system.  We will use this fact to construct a bijection
between the set of solutions of the
inhomogeneous Bethe Ansatz equations and the set of ``nondegenerate'' twisted
Miura opers.

Since we will be primarily concerned with opers, it will be convenient
to switch the roles of $\fg$ and ${}^L\fg$.  From now on, we consider
the Gaudin model for ${}^L\fg$, which will correspond to appropriate
$G$-opers, where $G$ is the simply connected group with Lie algebra
$\fg$.  The twist parameter may now be viewed as an element
$Z\in\fh$.\footnote{For much of the paper, we will in fact allow $Z$
  to be an element of a fixed Borel subalgebra $\fb_+$.}

Let $H$ be the maximal torus with Lie algebra $\fh$, and let $B_+$ and
$B_-$ be two opposite Borel subgroups containing $H$.  Roughly
speaking, an oper is a triple $(\cF_G,\nabla,\cF_{B_-})$, where
$\cF_G$ is a principal $G$-bundle on $\P^1$ endowed with a meromorphic
connection $\nabla$ and $\cF_{B_-}$ is a reduction of structure of the
bundle to $B_-$ and $\nabla$ is a meromorphic connection on $\cF_G$
which satisfies a certain genericity condition with respect to
$\nabla$.  A Miura oper is an oper together with an addition reduction
of structure $\cF_{B_+}$ to the opposite Borel subgroup which is
preserved by $\nabla$.  We now consider Miura opers whose underlying
connection has regular singularities away from infinity, is
monodromy-free, and is ``$Z$-twisted''.  It turns out that the set of
twisted Miura opers with the same underlying oper is a subvariety of
the flag manifold called the Springer fiber over $Z$.  Finally, given
a Miura oper, we construct a family of Miura $\GL(2)$-opers
parameterized by the fundamental weight.  The underlying Miura oper is
called a \emph{$Z$-twisted Miura-Pl\"ucker $G$-oper} if the zero
monodromy and $Z$-twistedness conditions hold on this family of Miura
$\GL(2)$-opers and not necessarily on the $G$-oper itself.

In this paper, we show that solutions of the $Z$-twisted Bethe Ansatz
equations for ${}^L \fg$ are parameterized by nondegenerate
$Z$-twisted Miura $G$-opers.  In order to accomplish this, we
introduce a system of differential equations called the $qq$-system
associated to $G$, the regular singularities $z_j$, and the twist
parameter $Z$.  This is a nonlinear system on a collection of rational
functions $\{q^i_+(z),q^i_-(z)\}_{i\in\Delta}$, indexed by the set of
simple roots $\Delta$, which determine relations satisfied by the
Wronskians $W(q^i_+(z),q^i_-(z))$.  We first construct a surjection
from nondegenerate polynomial solutions of the $qq$-system for $Z$ to
nondegenerate $Z$-twisted Miura-Pl\"ucker opers
(Corollary~\ref{MPsurj}).  (In fact, we give a bijection between these
solutions and ``$Z$-twisted Miura-Pl\"ucker data''
(Theorem~\ref{inj}).)  Next, we prove that there is a surjective map
from these polynomial solutions to solutions of the Bethe Ansatz
equation (Theorem~\ref{BAE}.  We show that the fibers of these
surjections coincide, thereby obtaining a one-to-one correspondence
between nondegenerate $Z$-twisted Miura-Pl\"ucker opers and solutions
of the Bethe Ansatz equations (Theorem~\ref{MPbethe}).

We then introduce the crucial technical tool of \emph{B\"acklund
  transformations}: transformations on twisted Miura-Pl\"ucker opers
associated to Weyl group elements.  These transformations change not
only the Miura-Pl\"ucker oper, but also the twist factor.  We use
B\"acklund transformations to show that, as long as certain
combinatorial conditions are satisfied, nondegenerate twisted
Miura-Pl\"ucker opers are in fact Miura opers
(Theorem~\ref{thm:MPisM}).  As a corollary, we obtain the following
important theorem (Theorem~\ref{Mbethe}): under appropriate
combinatorial hypotheses, there is a bijection between
nondegenerate $Z$-twisted Miura opers and solutions to the Bethe
Ansatz equations.

Our approach to this problem was inspired by recent work of Frenkel,
Koroteev, and two of the authors on a $q$-deformation of the
correspondence between opers and the spectra of the Gaudin
model~\cite{KSZ,Frenkel:2020}.  These papers relate solutions of the
Bethe Ansatz for the XXZ-model to certain $q$-difference equation
versions of opers called twisted Miura-Pl\"ucker
$(G,q)$-opers.
The
correspondence goes through the intermediary of the ``$QQ$-system'': a
system of $q$-difference equations involving quantum Wronskians, which
was introduced by Masoero, Raimondo, and
Valeri~\cite{Masoero_2016,Masoero_2016_SL} (see also
\cite{Frenkel:ac}).  However, we observe that our present results go
beyond what is known about the XXZ model. In particular, the results of
\cite{KSZ,Frenkel:2020} are limited to the case when the twist
parameter is regular semisimple.

\subsection*{Acknowledgements} We are grateful to Edward Frenkel for
his valuable comments.  A.M.Z. is partially supported by Simons
Collaboration Grant 578501 and NSF grant
DMS-2203823.  D.S.S is partially supported
by Simons Collaboration Grant 637367.

\section{$G$-opers with regular singularities}
\label{Sec:DefinitionsOpers}

\subsection{Notation and group-theoretic background}    \label{regsing} 

Let $G$ be a connected, simply connected, simple algebraic group of
rank $r$ over $\mathbb{C}$.  We fix a Borel subgroup $B_-$ with
unipotent radical $N_-=[B_-,B_-]$ and a maximal torus $H\subset B_-$.
Let $B_+$ be the opposite Borel subgroup containing $H$ and
$N_+=[B_+,B_+]$.  Let $\{ \alpha_1,\dots,\alpha_r \}$ be the set of
positive simple roots for the pair $H\subset B_+$.  Let
$\{ \check\alpha_1,\dots,\check\alpha_r \}$ be the corresponding
coroots; the elements of the Cartan matrix of the Lie algebra $\fg$ of
$G$ are given by $a_{ij}=\langle
\alpha_j,\check{\alpha}_i\rangle$. The Lie algebra $\fg$ has Chevalley
generators $\{e_i, f_i, \check{\alpha}_i\}_{i=1, \dots, r}$, so that
$\fb_-=\Lie(B_-)$ is generated by the $f_i$'s and the
$\check{\alpha}_i$'s and $\fb_+=\Lie(B_+)$ is generated by the $e_i$'s
and the $\check{\alpha}_i$'s.  Similarly the Lie algebra
$\mathfrak{n}_-=\Lie(N_-)$ is generated by the $f_i$'s and
$\mathfrak{n}_+=\Lie(N_+)$ is generated by the $e_i$'s.  Let
$\omega_1,\dots\omega_r$ be the fundamental weights, defined by
$\langle \omega_i, \check{\alpha}_j\rangle=\delta_{ij}$.

Let $W=N(H)/H$ be the Weyl group of $G$.  Let $W$ be the Weyl group of
$G$.  For each $i$, we let $s_i\in W$ be the simple reflection corresponding to
$\alpha_i$. We also let $w_0$ be the longest element of $W$, so
that $B_+=w_0(B_-)$.

Recall that for any Borel subgroup $B$, the group $G$ is partitioned
into  Bruhat cells $BwB$ indexed by elements of $W$.  Here, one chooses
some maximal torus $T\subset B$ and sets $BwB=BnB$, where $n$ is
any lift of $w\in N(T)/T\cong W$.  Since we defined  $W$ in terms
of $H$, it is not immediately clear that this process makes sense.
However, an argument involving the ``abstract Cartan algebra'' (see
for example
~\cite[\S 3.1.22]{CG}) shows that the 
Bruhat cells are well-defined.  We refer the reader to \S 2.1 of
\cite{Frenkel:2020} for the details.

\subsection{Meromorphic $G$-opers}

We now define meromorphic $G$-opers.  While the definitions below may
be extended easily to an arbitrary smooth curve, we will restrict
ourselves to the case of $\P^1$.

Let $\cF_G$ be a principal $G$-bundle on $\P^1$ endowed with a
connection $\nabla$.  This connection is automatically flat.  Let
$\cF_{B_-}$ be a reduction of $\cF_G$ to the Borel subgroup $B_-$.
If $\nabla'$ is any connection which preserves $\cF_{B_-}$, then
$\nabla-\nabla'$ induces a well-defined one-form on $\P^1$ with values
in the associated bundle
$(\mathfrak{g}/\mathfrak{b}_-)_{\cF_{B_-}}$. We denote this 1-form by
$\nabla/\cF_{B_-}$.

Following \cite{Beilinson:2005}, we will define a $G$-oper as a
$G$-connection $(\cF_G,\nabla)$ together with a  reduction $\cF_{B_-}$
of the $G$-bundle
to the Borel subgroup $B_-$; this reduction is not preserved by the 
connection but instead satisfies a special ``transversality
condition''  defined in terms of the 1-form $\nabla/\cF_{B_-}$.

To define this transversality condition, let $\bf{O}\in
[\mathfrak{n}_-,
\mathfrak{n}_-]^{\perp}/\mathfrak{b}_-\in\mathfrak{g}/\mathfrak{b}_-$
be the open  $B_-$-orbit consisting of vectors stabilized by $N_-$ and
such that all of the simple root components with
respect to the adjoint action of $B_-/N_-$, are non-zero.  Here, the
orthogonal complement is taken with respect to the Killing form.

\begin{Def}    \label{op}
  A meromorphic $G$-{\em oper}  on
  $\mathbb{P}^1$ is a triple $(\cF_G,\nabla,\cF_{B_-})$, where $\cF_G$
  is a principal $G$-bundle on $\P^1$ equipped with a meromorphic
  connection $\nabla$ and $\mathcal{F}_{B_-}$ is a reduction of $\cF_G$
  to $B_-$ satisfying the following condition: there exists a Zariski
  open dense subset $U \subset \P^1$ together with a trivialization
  $\imath_{B_-}$ of $\mathcal{F}_{B_-}$ such that the restriction of the
  1-form $\nabla/\cF_{B_-}$ to $U$, written as an element of $\mathfrak{g}/\mathfrak{b}_-(z)$, belongs to $\mathbf{O}(z)$.
\end{Def}

Note that this property does not depend on the choice of
trivialization.

In terms of the particular trivialization $\imath_{B_-}$, the
underlying connection of the $G$-oper can be written concretely as
\begin{equation}    \label{op1}
\nabla=\partial_z+\sum^r_{i=1}\phi_i(z)e_i+b(z)
\end{equation}
where $\phi_i(z) \in\C(z)$ and  $b(z)\in \mathfrak{b}_-(z)$ are
regular on $U$ and moreover $\phi_i(z)$ has no zeros in $U$.

\subsection{Miura opers}

We will also need the notion of a Miura oper introduced in
\cite{Frenkel:2003qx,Frenkel:2004qy}. This is an oper together with a
reduction of the underlying $G$-bundle to the opposite Borel subgroup
that is preserved by the oper connection.

\begin{Def}    \label{Miura}
  A {\em Miura $G$-oper} on $\mathbb{P}^1$ is a quadruple
 $(\cF_G,\nabla,\cF_{B_-},\cF_{B_+})$, where $(\cF_G,\nabla,\cF_{B_-})$ is a
  meromorphic $G$-oper on $\P^1$ and $\cF_{B_+}$ is a reduction of
  the $G$-bundle $\cF_G$ to $B_+$ that is preserved by the
  connection $\nabla$.
\end{Def}

Given a Miura $G$-oper, we refer to the $G$-oper obtained by forgetting
$\cF_{B_+}$ the underlying $G$-oper.

We next need to consider the relative position of the two reductions
over any $x\in\P^1$.  This relative position will be an element of the
Weyl group.  To define this, first
note that the fiber
$\cF_{G,x}$ of $\cF_G$ at $x$ is a $G$-torsor with reductions
$\cF_{B_-,x}$ and $\cF_{B_+,x}$ to $B_-$ and $B_+$
respectively.   Under this
isomorphism, $\cF_{B_-,x}$ gets identified with $gB_- \subset G$ and
$\cF_{B_+,x}$ with $hB_+$ for some $g,h\in G$. The quotient $g^{-1}h$
specifies an element of
the double coset space $B_-\backslash G/B_+$.  The Bruhat
decomposition gives a bijection
between this spaces and the Weyl group, so we obtain a well-defined
element of $G$.

We say that $\cF_{B_-}$ and $\cF_{B_+}$ have  {\em generic
  relative position} at $x\in\P^1$ if the relative position is the
identity element of $W$.  More concretely, this mean that the quotient
$g^{-1}h$ belongs to the open dense Bruhat cell $B_-B_+ \subset
G$.

The following result was proved in
\cite{Frenkel:2003qx,Frenkel:2004qy}.  It will be convenient to give a
different proof here.

\begin{Thm}    \label{gen rel pos}
  For any Miura $G$-oper on $\mathbb{P}^1$, there exists an open
  dense subset $V \subset \P^1$ such that the reductions $\cF_{B_-}$
  and $\cF_{B_+}$ are in generic relative position for all $x \in V$.
\end{Thm}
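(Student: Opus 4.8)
The plan is to work locally on a punctured neighborhood of a generic point and reduce the statement to a computation inside the group $G$, then use an irreducibility/Zariski-density argument to conclude. First I would choose a Zariski open dense $U\subset\P^1$ over which all the bundles trivialize compatibly: pick the trivialization $\imath_{B_-}$ of $\cF_{B_-}$ furnished by Definition~\ref{op}, so that the oper connection takes the form \eqref{op1}, $\nabla=\partial_z+\sum_i\phi_i(z)e_i+b(z)$ with the $\phi_i$ nonvanishing on $U$. Since $\cF_{B_+}$ is a reduction of the same $G$-bundle, on a (possibly smaller) open dense subset we may write $\cF_{B_+}$ in this trivialization as $z\mapsto g(z)B_+$ for a meromorphic $G$-valued function $g$; the relative position of $\cF_{B_-}$ and $\cF_{B_+}$ at $x$ is then read off from the Bruhat cell containing $g(x)$. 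The condition that $\nabla$ preserves $\cF_{B_+}$ is the statement that $\mathrm{Ad}(g)^{-1}$ applied to the connection lands in $\partial_z+\fb_+(z)$, i.e. $g^{-1}\nabla g \in \partial_z+\fb_+(z)$.

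Next I would argue that generic relative position holds on a dense open set. The locus where $\cF_{B_-}$ and $\cF_{B_+}$ are in relative position $w$ is, locally, the preimage under $g$ of the Bruhat cell $B_-wB_+$; since these cells stratify $G$ with the big cell $B_-B_+$ open and dense, it suffices to show that $g$ does not map all of $\P^1$ into the complement of the big cell. Equivalently, I must rule out the possibility that the relative position is everywhere $\neq e$. Suppose for contradiction that $g(z)\in B_-w B_+$ for all $z$ with some fixed $w\neq e$ (shrinking $U$ so the relative position is constant on a dense open set — it is constant on the generic point, which is what matters). Write $g(z)=n_-(z)\,\dot w\,n_+(z)$ with $n_\pm(z)\in N_\pm$ meromorphic and $\dot w$ a fixed lift of $w$; actually it is cleaner to absorb: since multiplying $g$ on the right by $B_+$ does not change $\cF_{B_+}$, we may take $g(z)=n_-(z)\dot w$ with $n_-(z)\in N_-$. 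Now impose $g^{-1}\nabla g\in\partial_z+\fb_+(z)$: conjugating $\partial_z+\sum_i\phi_i e_i+b$ by $\dot w^{-1}n_-^{-1}$ and demanding the $\fg/\fb_+$-component vanish. The key point is that the $\fn_-$-part of $n_-^{-1}\nabla n_-$ is forced to be $-\mathrm{Ad}(\dot w)(\text{something in }\fb_+)$, and examining the simple-root (height-one) components of the $e_i$-terms: the term $\sum_i\phi_i(z)e_i$ must, after conjugation by $\dot w^{-1}$, become a section of $\fb_+$. But $\mathrm{Ad}(\dot w^{-1})e_i$ is a root vector for $w^{-1}(\alpha_i)$, which is a negative root for at least one $i$ whenever $w\neq e$ (this is the defining property $\ell(w)>0\Rightarrow w^{-1}(\alpha_i)<0$ for some simple $\alpha_i$). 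The $N_-$-conjugation $n_-^{-1}(\cdot)n_-$ can only add higher-height negative root contributions, so it cannot cancel this lowest-height negative-root component coming from $\phi_i e_i$ — and $\phi_i$ is nonvanishing on $U$, so the component genuinely does not vanish. This contradicts $g^{-1}\nabla g\in\partial_z+\fb_+$.

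The main obstacle, and the step deserving the most care, is precisely this last cancellation argument: one must set up the filtration of $\fg$ by root height (or by the grading where $\deg e_i=1$, $\deg f_i=-1$), track how conjugation by a generic $N_-$-valued meromorphic function acts on each graded piece, and isolate the obstruction in the correct graded component. Concretely, if $w\neq e$ choose $i$ with $w^{-1}(\alpha_i)$ negative; then in $\dot w^{-1}(\partial_z+\sum_j\phi_j e_j+b)\dot w$ the summand $\phi_i\,\mathrm{Ad}(\dot w^{-1})e_i$ contributes a nonzero multiple of a negative root vector of a specific root $\beta:=w^{-1}(\alpha_i)$, and the subsequent conjugation by $n_-^{-1}$ and $n_-$ together with the $\fb_-$-valued $b$ can only modify the $\beta$-graded piece by terms that, restricted to the $\beta$-component, still leave $\phi_i\ne 0$ — one checks this by a lowest-weight-component argument, choosing $\beta$ of maximal possible height among $\{w^{-1}(\alpha_j):w^{-1}(\alpha_j)<0\}$ if needed so that no lower-height cancellation is available. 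Once this is pinned down, the contradiction is immediate and the theorem follows: the relative position at the generic point is $e$, hence it is $e$ on a Zariski open dense $V\subset\P^1$, which is the assertion. I would remark at the end that this local argument reproves the result of \cite{Frenkel:2003qx,Frenkel:2004qy} in a form convenient for the explicit matrix computations with twisted Miura opers carried out later in the paper.
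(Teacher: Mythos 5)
Your argument is correct and essentially identical to the paper's: both reduce to writing the $B_+$-reduction as $n_-(z)\dot w B_+(z)$ via the Bruhat decomposition over $\C(z)$, observe that conjugation by $N_-(z)$ (and addition of the $\mathfrak{b}_-$-valued terms) leaves the simple-root components $\sum_i\phi_i(z)e_i$ untouched, and conclude that $w^{-1}(\alpha_i)$ must be positive for every $i$, forcing $w=e$. Your closing worry about ``lower-height cancellation'' is moot provided you keep the conjugations in the correct order (first by $n_-^{-1}$, then by $\dot w^{-1}$): since $\mathrm{Ad}(\dot w^{-1})$ permutes root spaces and $\sum_i\phi_ie_i+\tilde b(z)$ has no component in $\mathfrak{g}_\gamma$ for $\gamma$ a non-simple positive root, nothing can land in $\mathfrak{g}_{w^{-1}(\alpha_i)}$ except $\phi_i\,\mathrm{Ad}(\dot w^{-1})e_i$ itself, so no choice of maximal height is needed.
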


\begin{proof}
  Let $U$ be a Zariski open dense subset on $\P^1$ as in Definition
  \ref{op}. Choosing a trivialization $\imath_{B_-}$ of $\cF_G$ on $U$, we can write the connection $\nabla$ in the form
  \eqref{op1}.  On the other hand, using the $B_+$-reduction
  $\cF_{B_+}$, we can choose another trivialization of $\cF_G$ on $U$
  such that the connection in this gauge is preserved by $\nabla$.  In
  other words, there exists $g(z) \in G(z)$ such that
\begin{equation}    \label{connecting}
g(z)\partial_z g^{-1}(z) +g(z)(\sum^r_{i=1}\phi_i(z)e_i+b(z))g^{-1}(z)\in \mathfrak{b}_+(z) 
\end{equation}
This means that the relative position of the two reductions is determined
by $g^{-1}(z)$.   It thus suffices to show that $g^{-1}(z)\in
B_-(z)B_+(z)$ or equivalently, $$g(z)\in B_+(z) B_-(z)=B_+(z) N_-(z).$$

By the Bruhat decomposition, we know that $g(z)\in B_+(z) w N_-(z)$
for some $w\in W$, say  $g(z) = b_+(z) w n_-(z)$ for
some $b_+(z) \in B_+(z), n_-(z) \in N_-(z)$.
Substituting this into \eqref{connecting} and simplifying gives
\begin{equation} \label{miurafinal}
n_-(z)\partial_zn_-^{-1}(z)+n_-(z)(\sum^r_{i=1}\phi_i(z)e_i+b(z))n_-^{-1}(z)=
\sum^r_{i=1}\phi_i(z)e_i+\tilde{b}(z)\in w^{-1}\mathfrak{b}_+(z)w,
\end{equation}
where $\tilde{b}(z)\in \mathfrak{b}_{-}(z)$.  It is well-known that
$w^{-1}\mathfrak{b}_+w=\mathfrak{h}+(\mathfrak{n}_-\cap
w^{-1}\mathfrak{b}_+w))+(\mathfrak{n}_+\cap w^{-1}\mathfrak{b}_+w)$.
Since the 
strictly upper triangular component of the expression in
\eqref{miurafinal} is $\sum^r_{i=1}\phi_i(z)e_i$, we conclude that
$\phi_i(z)e_i\in w^{-1}\mathfrak{b}_+w$ for all $i$.  This means that
$w$ preserves the set of simple roots, i.e., $w=1$.

\end{proof}

\begin{Cor}    \label{gen rel pos1}
For any Miura $G$-oper on $\mathbb{P}^1$, there exists a
trivialization of the underlying $G$-bundle $\cF_G$ on an open
dense subset of $\P^1$ for which the oper connection has the form
\begin{equation}    \label{genmiura}
\nabla=\partial_z+\sum^r_{i=1}g_i(z)\check{\alpha}_i+\sum^r_{i=1}{\phi_i(z)}e_i,
\end{equation}
where $g_i(z), \phi_i(z)\in \mathbb{C}(z)$.
\end{Cor}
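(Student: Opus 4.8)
The plan is to read the normal form directly off the computation already carried out in the proof of Theorem~\ref{gen rel pos}, performing the change of trivialization by only the unipotent factor that appears there. So, start with a Zariski open dense $U\subset\P^1$ and a trivialization $\imath_{B_-}$ of $\cF_G$ as in Definition~\ref{op}, giving $\nabla=\partial_z+\sum_{i=1}^r\phi_i(z)e_i+b(z)$ with $b(z)\in\mathfrak{b}_-(z)$. Because $\cF_{B_+}$ is preserved by $\nabla$, there is, after shrinking $U$, an element $g(z)\in G(z)$ satisfying \eqref{connecting}; the Bruhat decomposition writes $g(z)=b_+(z)\,w\,n_-(z)$, and Theorem~\ref{gen rel pos} forces $w=1$, so $g(z)=b_+(z)n_-(z)$ with $n_-(z)\in N_-(z)$. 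Being the $N_-$-component of a rational map into the big cell $B_+N_-$, the factor $n_-(z)$ is itself rational, hence regular and invertible after further shrinking $U$.

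The key step is then to change the trivialization by $n_-(z)$ rather than by the full $g(z)$. Since $N_-(z)\subset B_-(z)$, the new trivialization is still compatible with $\cF_{B_-}$, so the connection stays in oper form; in fact, equation \eqref{miurafinal} specialized to $w=1$ exhibits the new connection as $\partial_z+\sum_{i=1}^r\phi_i(z)e_i+\tilde b(z)$ with $\tilde b(z)\in\mathfrak{b}_-(z)$, and at the same time places the element $\sum_{i=1}^r\phi_i(z)e_i+\tilde b(z)$ in $\mathfrak{b}_+(z)$.

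To finish, observe that $\sum_{i=1}^r\phi_i(z)e_i\in\mathfrak{n}_+(z)\subset\mathfrak{b}_+(z)$, so membership of $\sum_{i=1}^r\phi_i(z)e_i+\tilde b(z)$ in $\mathfrak{b}_+(z)$ forces $\tilde b(z)\in\mathfrak{b}_+(z)$ as well; together with $\tilde b(z)\in\mathfrak{b}_-(z)$ this yields $\tilde b(z)\in\mathfrak{b}_+(z)\cap\mathfrak{b}_-(z)=\mathfrak{h}(z)$. Expanding $\tilde b(z)=\sum_{i=1}^r g_i(z)\check\alpha_i$ with $g_i(z)\in\C(z)$---which is legitimate because the simple coroots form a basis of $\mathfrak{h}$---gives exactly \eqref{genmiura}.

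I do not expect a genuine obstacle here: the corollary is essentially a repackaging of the proof of Theorem~\ref{gen rel pos}, the real content being that the relative-position computation already removes the $\mathfrak{n}_-$-part of $b(z)$. The points that need a little care are purely bookkeeping---that the unipotent factor $n_-(z)$ is rational, so that it is available as a change of trivialization on an open dense subset, and that conjugation by an $N_-(z)$-valued function preserves the oper form and leaves the $\phi_i(z)$ unchanged; both follow from $[\mathfrak{n}_-,e_i]\subset\mathfrak{b}_-$ and $\mathrm{Ad}(N_-)\mathfrak{b}_-\subset\mathfrak{b}_-$.
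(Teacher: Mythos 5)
Your proof is correct and follows the paper's own argument essentially verbatim: take the $w=1$ conclusion from Theorem~\ref{gen rel pos}, gauge by the unipotent factor $n_-(z)$ so that \eqref{miurafinal} places the connection simultaneously in $\mathfrak{b}_+(z)$ and in $\sum\phi_i e_i+\mathfrak{b}_-(z)$, deduce $\tilde b(z)\in\mathfrak{h}(z)$, and expand in the simple coroots. The extra bookkeeping you supply (rationality of $n_-(z)$, preservation of the oper form under $N_-(z)$-gauge) is implicit in the paper and is handled correctly.
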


\begin{proof}

  The previous theorem shows that $w=1$ in \eqref{miurafinal}, so 
  there exists a gauge transformation
  $n_-(z)$ which takes the explicit form of the connection $\nabla=\partial_z+\sum^r_{i=1}\phi_i(z)e_i+b(z)$ into
\begin{equation}
n_-(z)\partial_zn_-^{-1}(z)+n_-(z)(\sum^r_{i=1}\phi_i(z)e_i+b(z))n_-^{-1}(z)=
\sum^r_{i=1}\phi_i(z)e_i+\tilde{b}(z)\in \mathfrak{b}_+(z)
\end{equation}
where $\tilde{b}(z)\in \mathfrak{b}_-(z)$.  This implies that  
$\tilde{b}(z)\in \mathfrak{h}(z)$, and the statement follows by decomposing $\tilde{b}(z)$ with
respect to the simple coroots.
\end{proof}

\subsection{Opers and Miura opers with regular singularities}

Let $\Lambda_1(z),\dots, \Lambda_r(z)$ be a collection of
nonzero polynomials.

\begin{Def} \label{d:regsing} A $G$-{\em oper with regular
    singularities determined by $\Lambda_1(z),\dots, \Lambda_r(z)$}
  is an oper on $\P^1$ whose connection \eqref{op} may be written in
  the form
\begin{equation}    \label{Lambda}
\nabla=\partial_z+\sum^r_{i=1}\Lambda_i(z)e_i+b(z), \qquad
b(z)\in \mathfrak{b}_-(z).
\end{equation}
\end{Def}

We will assume without loss of generality that the $\Lambda_i$'s are
monic, since this can always be arranged by a constant gauge change by an element of $H$.
Let $\{z^i_1,\dots,z^i_{N_i}\}$ be the set of distinct roots of the 
$\Lambda_i$'s.  To each $z^i_k$, we associate the integral coweight $\check{\lambda}_k$ via
\begin{equation}\label{lambdaroots}
\Lambda_i(z)=\prod^{N_i}_{k=1}(z-z^i_k)^{\langle  {\alpha}_i,\check{\lambda}_k\rangle}.
\end{equation}

\begin{Def}    \label{MiuraRS}
  {\em A Miura $G$-oper with regular singularities determined by
the polynomials $\Lambda_1(z),\dots, \Lambda_r(z)$} is a Miura
  $G$-oper whose underlying oper has
regular singularities determined by the $\Lambda_i(z)$'s.
\end{Def}

The following theorem is immediate from Corollary \ref{gen rel pos1}. 

\begin{Thm}    \label{gen elt1}
For every Miura $G$-oper with regular singularities determined by
the polynomials $\Lambda_1(z),\dots, \Lambda_r(z)$, the underlying
connection can be written in the form:
\begin{equation}    \label{form of A}
\nabla=\partial_z+\sum^r_{i=1}\Lambda_i(z)e_i+\sum^r_{i=1}g_i(z)\check{\alpha}_i, 
\end{equation}
where $g_i(z) \in \C(z).$
\end{Thm}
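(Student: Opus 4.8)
The plan is to deduce Theorem~\ref{gen elt1} directly from Corollary~\ref{gen rel pos1}. Recall that the latter produces, for any Miura $G$-oper, a trivialization of $\cF_G$ on a dense open subset of $\P^1$ in which the connection takes the form $\nabla=\partial_z+\sum_i g_i(z)\check\alpha_i+\sum_i\phi_i(z)e_i$ with $g_i,\phi_i\in\C(z)$. So the only thing left to show is that, when the underlying oper has regular singularities determined by $\Lambda_1(z),\dots,\Lambda_r(z)$, one can further arrange $\phi_i(z)=\Lambda_i(z)$.

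First I would recall how the trivialization in Corollary~\ref{gen rel pos1} was built: starting from the oper form \eqref{Lambda}, $\nabla=\partial_z+\sum_i\Lambda_i(z)e_i+b(z)$ with $b(z)\in\fb_-(z)$, the corollary applies a gauge transformation by an element $n_-(z)\in N_-(z)$ (coming from the Bruhat factorization $g(z)=b_+(z)n_-(z)$ of the element conjugating into $\fb_+$, using $w=1$ from Theorem~\ref{gen rel pos}). The key observation is that conjugation by $n_-(z)\in N_-(z)$ does not change the $e_i$-components of the connection: writing $\mathfrak{g}=\bigoplus_{\beta}\mathfrak{g}_\beta$, the element $\sum_i\Lambda_i(z)e_i+b(z)$ lies in $\mathfrak{b}_-(z)\oplus\bigoplus_i\mathbb{C}(z)e_i$, and for $n_-\in N_-$ the conjugate $n_-(\sum_i\Lambda_i e_i+b)n_-^{-1}$ has its component along each simple root space $\mathfrak{g}_{\alpha_i}$ equal to $\Lambda_i(z)$, because all other contributions — from $\partial_z n_-^{-1}$ lowering the $n_-$ part, from conjugating $b(z)\in\mathfrak{b}_-$, and from the lower-order terms in conjugating $e_i$ itself — land in $\mathfrak{b}_-\oplus(\text{root spaces }\mathfrak{g}_\beta\text{ with }\beta\text{ not simple})$. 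Hence after this gauge transformation the $e_i$-coefficient is still exactly $\Lambda_i(z)$; combining with Corollary~\ref{gen rel pos1}, which says the remaining $\fb_-$-part is just the Cartan part $\sum_i g_i(z)\check\alpha_i$, gives precisely \eqref{form of A}.

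An equivalent and perhaps cleaner way to phrase this: apply Corollary~\ref{gen rel pos1} to get the form \eqref{genmiura} with some $\phi_i(z)$, and separately observe that the $e_i$-coefficient of an oper connection is a gauge invariant up to scaling by $H(z)$-valued functions. Indeed, the map $(\cF_G,\nabla,\cF_{B_-})\mapsto(\phi_1,\dots,\phi_r)$ changes under a change of trivialization of $\cF_{B_-}$ only by the adjoint action of $B_-(z)$, and the induced action on the simple-root components is through the torus quotient $B_-/N_-\cong H$, i.e. $\phi_i\mapsto\chi_i(h(z))^{-1}\phi_i$ for the appropriate character. Since \eqref{Lambda} and \eqref{genmiura} are two trivializations of the same oper, their $\phi_i$'s differ by such an $H(z)$-factor; absorbing it into a further constant... no — absorbing it into a rational $H(z)$-gauge transformation (which preserves the Cartan-plus-$e_i$ shape of \eqref{genmiura}) brings $\phi_i$ to $\Lambda_i$, at the cost of modifying the $g_i$'s, which remain rational. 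That is exactly the claimed form.

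The step I expect to be the only real (though minor) obstacle is the bookkeeping in the first argument: verifying carefully that conjugation by $N_-(z)$ together with the $\partial_z n_-^{-1}$ term genuinely preserves each simple-root component $\Lambda_i(z)$ and does not mix in contributions from $b(z)$ or from deeper commutators. This is a grading/weight argument — everything other than $\sum_i\Lambda_i e_i$ sits in $\fb_-\oplus\bigoplus_{\mathrm{ht}(\beta)\ge 2}\mathfrak{g}_\beta$, which is stable under $\mathrm{Ad}(N_-)$ and contains the logarithmic-derivative term — so it is routine, but it is the one place where one must be explicit. Everything else is a direct invocation of Corollary~\ref{gen rel pos1}.
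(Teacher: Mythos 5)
Your proposal is correct and matches the paper's approach: the paper simply declares Theorem~\ref{gen elt1} immediate from Corollary~\ref{gen rel pos1}, and your first argument supplies exactly the implicit justification, namely that the gauge transformation by $n_-(z)\in N_-(z)$ used there fixes each simple-root component, so the coefficients $\Lambda_i(z)$ from \eqref{Lambda} survive unchanged. The height-grading bookkeeping you flag is indeed the only content, and you have it right.
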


For the rest of the paper, all opers and Miura opers will have regular
singularities with respect to the fixed collection of monic
polynomials $\Lambda_1(z),\dots, \Lambda_r(z)$.

\subsection{$Z$-twisted opers}    \label{Sec:Ztw}

We will primarily be interested in (Miura) opers whose underlying
connection is gauge
equivalent to a constant element of $\mathfrak{g}$.

\begin{Def}    \label{Ztwoper}
  A {\em $Z$-twisted $G$-oper} on $\mathbb{P}^1$ is a $G$-oper
  that is equivalent to the constant element $Z \in \mathfrak{g} \subset \mathfrak{g}(z)$
  under the gauge action of $G(z)$.
\end{Def}

Concretely,  if the matrix form of the oper connection in a particular
trivialization is given by $\nabla=\partial_z+A(z)$, then there exists 
  $g(z) \in  G(z)$ such that
\begin{equation}    \label{Ag}
A(z)=g(z)\partial_z g^{-1}(z)+g(z)Z g(z)^{-1}.
\end{equation}

\begin{Rem} 
Note that for $Z\ne 0$, the constant connection $\partial_z+Z$ has a
double pole at $\infty$, so $Z$-twisted opers are the same as the
opers with a double pole at $\infty$ considered in
~\cite{Feigin:2006xs,Rybnikov:2010}.
\end{Rem}

To define $Z$-twisted Miura opers, we will assume that
$Z\in\mathfrak{b}_+$.  We introduce the notation
\begin{equation}    \label{Z}
Z =  Z^H+\sum^r_{i=1}c_ie_i+n, \qquad Z^H =  \sum^r_{i=1}\zeta_i\check\alpha_i,\qquad 
\zeta_i, c_i \in \mathbb{C}, \qquad n\in [\mathfrak{n}_+, \mathfrak{n}_+].
\end{equation}

\begin{Def}    \label{ZtwMiura}
A {\em $Z$-twisted Miura $G$-oper} is a Miura $G$-oper on
$\mathbb{P}^1$ that is equivalent to the constant element $Z \in \mathfrak{b}_+
\subset \mathfrak{b}_+(z)$ under the gauge action of $B_+(z)$, i.e.,
there exists $v(z)\in B_+(z)$ such that the matrix of the oper
connection is given by
\begin{equation}    \label{gaugeA}
A(z)=v(z)\partial_z v^{-1}(z)+v(z)Z v(z)^{-1}.
\end{equation}
\end{Def}

For untwisted opers, there is a full flag variety $G/B_+$ of
associated  
Miura opers. For twisted opers, we must introduce certain closed
subvarieties of the flag manifold of the form $(G/B_+)_Z=\{gB_+\mid
g^{-1}Zg\in\fb_+\}$; these varieties are called
\emph{Springer fibers}.  Springer fibers play an important role in
representation theory.  (See, for example, Chapter 3 of \cite{CG}.)  For
$\SL(n)$ (or $\GL(n)$),  a Springer fiber may be viewed as the space of complete
flags in $\C^n$ preserved by a fixed endomorphism.

\begin{Prop}  The map from Miura
$Z$-twisted opers to $Z$-twisted opers is a fiber bundle with fiber
$(G/B_+)_Z$.
\end{Prop}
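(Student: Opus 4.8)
The plan is to exhibit the map $\Phi\colon\{\text{$Z$-twisted Miura opers}\}\to\{\text{$Z$-twisted opers}\}$ (forgetting $\cF_{B_+}$) as a locally trivial fibration whose fiber over a point is canonically identified with the Springer fiber $(G/B_+)_Z$. First I would fix a $Z$-twisted oper $(\cF_G,\nabla,\cF_{B_-})$ and describe its fiber under $\Phi$. By Definition~\ref{Ztwoper}, there is a trivialization in which $\nabla=\partial_z+A(z)$ with $A(z)=g(z)\partial_zg^{-1}(z)+g(z)Zg(z)^{-1}$ for some $g(z)\in G(z)$. A $B_+$-reduction $\cF_{B_+}$ preserved by $\nabla$ amounts, in this trivialization, to choosing a family of cosets, i.e. a flat section of the associated $G/B_+$-bundle; since $\nabla$ is gauge-equivalent to the constant connection $\partial_z+Z$, its flat sections (of the $G/B_+$-bundle) are of the form $z\mapsto g(z)e^{-zZ}\cdot pB_+$ for $p\in G$ constant, and $\nabla$-invariance forces exactly $\mathrm{Ad}(p^{-1})Z\in\fb_+$, i.e. $pB_+\in(G/B_+)_Z$. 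Two choices $p,p'$ give the same reduction iff $pB_+=p'B_+$. This gives a bijection between $\Phi^{-1}(\text{oper})$ and $(G/B_+)_Z$, and one should check it is a morphism of varieties (it is, being cut out by the same incidence conditions).

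Next I would upgrade this fiberwise bijection to a fiber bundle structure. The cleanest route: work with the open dense set $U$ from Definition~\ref{op} and use the concrete normal form. Over $U$, trivialize so that $\nabla=\partial_z+\sum\Lambda_i(z)e_i+b(z)$. A $B_+$-reduction is a section $z\mapsto u(z)B_+$ of the $G/B_+$-bundle; $\nabla$-invariance is the differential equation $\partial_z u + (\sum\Lambda_i e_i+b)u \in u\,\fb_+$, whose solutions through a given point at a given $z_0$ are unique (this is just flatness of the induced connection on $G/B_+$). So over $U$ the space of $\nabla$-invariant $B_+$-reductions is identified — by evaluation at a base point $z_0\in U$ — with $(G/B_+)_{A(z_0)}$, and since $A(z_0)$ is conjugate to $Z$, this is (noncanonically) $(G/B_+)_Z$. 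Trivializing over a cover of the base of $Z$-twisted opers (or simply observing that locally on the base one can choose the gauge $g(z)$ and base point $z_0$ algebraically in families) yields local product structure $\Phi^{-1}(\mathcal{U})\cong\mathcal{U}\times(G/B_+)_Z$.

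The main obstacle, and the step deserving the most care, is the global/family-theoretic part: showing the gauge transformation $g(z)$ (equivalently the conjugating element realizing $A(z_0)\sim Z$) can be chosen to depend algebraically on the oper in a Zariski-local way on the base, and that the resulting local trivializations are compatible. Here I would argue that the ``base'' (the moduli of $Z$-twisted $G$-opers with fixed $\Lambda_i$) carries a universal oper, hence a universal connection; restricting to the universal $U$, parallel transport from a local section $z_0$ of the base produces a canonical map from the total space of $\Phi$ to the Springer-fiber bundle associated to the (locally constant, up to the conjugacy $A(z_0)\sim Z$) evaluation, and this map is an isomorphism fiberwise by the previous paragraph, hence an isomorphism by a standard argument (a fiberwise iso of flat families over a reduced base, or simply checking it on the explicit charts). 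Since $Z$ is fixed up to conjugacy, all fibers are abstractly isomorphic to $(G/B_+)_Z$, completing the proof.

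A minor subtlety to flag explicitly: one should note the claim is really that $\Phi$ is a Zariski-locally trivial (not merely set-theoretic) fibration, and that the identification of the fiber with $(G/B_+)_Z$ is canonical only after a choice of trivialization/base point, exactly as with any Springer-fiber bundle — but since $Z$ itself is fixed and the adjoint orbit of $Z$ is connected, the fibers are all (noncanonically) isomorphic to the single variety $(G/B_+)_Z$, which is what the proposition asserts.
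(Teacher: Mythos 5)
Your first paragraph is essentially the paper's entire proof: the paper simply observes that the underlying connection is isomorphic to $\partial_z+Z$, so that a $\nabla$-preserved $B_+$-reduction amounts to a Borel subalgebra of $\fg$ containing $Z$, i.e.\ a point of $(G/B_+)_Z$; it does not address local triviality at all. On the core fiber identification you and the paper therefore take the same route.

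Two caveats. First, the step ``$\nabla$-invariance forces exactly $\mathrm{Ad}(p^{-1})Z\in\fb_+$'' is not automatic: \emph{every} section $z\mapsto g(z)e^{-zZ}\cdot pB_+$ is flat, hence $\nabla$-invariant, for every $p\in G$. What singles out the Springer fiber is that the reduction must be algebraic (a rational section of the $G/B_+$-bundle): for semisimple $Z$ the map $z\mapsto e^{-zZ}pB_+$ is rational only if it is constant, which happens precisely when $\mathrm{Ad}(p^{-1})Z\in\fb_+$. When $Z$ has a nontrivial nilpotent component this argument genuinely breaks down, since $e^{-zZ}$ can be polynomial in $z$ and non-constant algebraic flat sections exist; there one must appeal to the $B_+(z)$-gauge condition in Definition~\ref{ZtwMiura} rather than to $\nabla$-invariance alone. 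The paper's proof elides the same point, but since you assert the implication explicitly you should justify it.

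Second, in your local-triviality paragraph the identification of the fiber with $(G/B_+)_{A(z_0)}$ is incorrect: $A(z_0)$ is the value of a connection matrix, which differs from a conjugate of $Z$ by the inhomogeneous term $g(z_0)\partial_z g^{-1}(z_0)$, so $A(z_0)$ need not be conjugate to $Z$ and $(G/B_+)_{A(z_0)}$ is not the right object. Evaluation at $z_0$ identifies the $\nabla$-invariant algebraic reductions with the translate $g(z_0)e^{-z_0 Z}\cdot (G/B_+)_Z$ inside $G/B_+$, which is what your first paragraph already gives. The remaining family-theoretic assertions (universal oper, compatibility of charts) are plausible but sketched; the paper does not attempt them and contents itself with the fiberwise statement.
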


\begin{proof}  Since the underlying connection of a $Z$-twisted oper is isomorphic to the 
  connection $\partial_z+Z$, a Miura structure on such an oper is equivalent
  to a $B_+$-reduction that is preserved by $\partial_z+Z$.  This is
  determined by a Borel subalgebra of $\fg$ that contains $Z$.  The
  flag variety may be identified with the space of Borel subalgebras
  via $gB\mapsto g\fb_+g^{-1}$, and the condition $Z\in g\fb_+g^{-1}$
  is equivalent to $gB\in (G/B_+)_Z$.
\end{proof}

\subsection{The associated Cartan connection}    \label{Hconn}

Consider a Miura $G$-oper with regular singularities determined by
polynomials $\Lambda_1(z),\dots, \Lambda_r(z)$. By Theorem
\ref{gen elt1}, the underlying $G$-connection can be written in
the form \eqref{form of A}. Since it preserves the $B_+$-bundle
$\cF_{B_+}$ that is part of the data of the Miura $G$-oper, it may be viewed as a meromorphic
$B_+$-connection on $\P^1$. Taking the quotient of $\cF_{B_+}$ by
$N_+ = [B_+,B_+]$ and using the fact that $B/N_+ \simeq H$, we obtain
an $H$-bundle $\cF_{B_+}/N_+$ endowed with an
$H$-connection, which we denote by $\nabla^H=\partial_z+A^H(z)$. According to formula
\eqref{form of A}, it is given by the formula
\begin{equation}    \label{AH}
A^H(z)=\sum^r_{i=1} g_i(z){\check{\alpha}_i}.
\end{equation}
We call $\nabla^H(z)=\partial_z+A^H(z)$ the \emph{associated Cartan connection} of the
Miura oper.

Now, if our Miura oper is $Z$-twisted, then we also have $A(z)=v(z)\partial_z v^{-1}(z)+v(z)Z v(z)^{-1}$, where
$v(z)\in B_+(z)$.  Since $v(z)$ can be written as
\begin{equation}    \label{vz}
v(z)=
\left(\prod_i y_i(z)^{\check{\alpha}_i}\right) n(z), \qquad n(z)\in N_+(z), \quad
y_i(z) \in \C(z)^\times,
\end{equation}
the Cartan connection $\nabla^H(z)=\partial_z+A^H(z)$ has the form:
\begin{equation}    \label{AH1}
A^H(z)=\sum^r_{i=1}
(\zeta_i -y_i(z)^{-1}\partial_zy_i(z))\check{\alpha}_i,
\end{equation}
with the $\zeta_i$'s defined in \eqref{Z}.  
We will refer to $\nabla^H(z)$ as a $Z$-{\em twisted Cartan connection}. This formula shows that $\nabla^H(z)$ is completely
determined by $Z^H$, i.e., the diagonal part of $Z$, and the rational functions $y_i(z)$. Indeed,
comparing this equation with \eqref{AH} gives
\begin{equation}    \label{giyi}
g_i(z)=\zeta_i -y_i(z)^{-1}\partial_zy_i(z)
\end{equation}

It is now easy to see that $\nabla^H(z)$ determines the $y_i(z)$'s uniquely
up to scalar. 

\section{Nondegenerate Miura-Pl\"ucker
  opers}    \label{Sec:nondegMiura}

Our main goal is to link Miura opers to solutions of a certain system
of equations which we will call the classical $qq$-system, which is in
turn related to the system of Bethe Ansatz equations for the Gaudin
model. We accomplish this in two steps.  First, we introduce the
notion of a $Z$-twisted Miura-Pl\"ucker $G$-oper.  We associate to a
Miura $G$-oper a collection of Miura $\GL(2)$-opers indexed by the
fundamental weights of $G$.  A $Z$-twisted Miura-Pl\"ucker oper is a
Miura oper where the $Z$-twistedness condition is replaced by a
slightly weaker condition imposed on these $\GL(2)$-opers.  Second, we
will restrict attention to opers satisfying certain nondegeneracy
conditions defined in terms of the corresponding Cartan connection.

\subsection{The associated Miura $\GL(2)$-opers}    \label{rank2}

In this section, we associate to a Miura $G$-oper with regular
singularities a collection of Miura $\GL(2)$-opers indexed by the
fundamental weights.

Let $V_i$ be the irreducible representation of $G$ with highest weight
given by the fundamental weight
$\omega_i$.  Let $L_i\subset V_i$ be the $B_+$-stable line consisting
of highest weight vectors.  If we choose a nonzero element
$\nu_{\omega_i}$ in $L_i$, then the subspace of $V_i$ of weight $\omega_i-\alpha_i$
is one-dimensional and is spanned by $f_i \cdot
\nu_{\omega_i}$.  Therefore, the two-dimensional subspace $W_i$ of
$V_i$ spanned by the weight vectors $\nu_{\omega_i}$ and $f_i \cdot
\nu_{\omega_i}$ is a $B_+$-invariant subspace of $V_i$.

Now, let $(\cF_G,\nabla,\cF_{B_-},\cF_{B_+})$ be a Miura $G$-oper with
regular singularities determined by polynomials
$\Lambda_1(z),\dots, \Lambda_r(z)$ as in Definition
\ref{MiuraRS}. Recall that $\cF_{B_+}$ is a $B_+$-reduction of a
$G$-bundle $\cF_G$ on $\P^1$ preserved by the $G$-connection
$\nabla$. Therefore for each $i$, the vector bundle
$$
\cV_i = \cF_{B_+} \times_{B_+} V_i = \cF_G \times_{G}
V_i
$$
associated to $V_i$ contains a rank two
subbundle
$$
\cW_i = \cF_{B_+} \times_{B_+} W_i
$$
associated to $W_i \subset V_i$, and $\cW_i$ in turn contains a line
subbundle
$$
\cL_i = \cF_{B_+} \times_{B_+} L_i
$$
associated to $L_i \subset W_i$.

Denote by $\phi_i(\nabla)$ the connection on the vector bundle $\cV_i$
(or equivalently, the $\GL(V_i)$-connection) corresponding to the
above Miura oper connection $\nabla$. Since $\nabla$ preserves
$\cF_{B_+}$, we see that $\phi_i(\nabla)$ preserves the subbundles
$\cL_i$ and $\cW_i$ of $\cV_i$. Denote by $\nabla_i$ the corresponding
connection on the rank 2 bundle $\cW_i$.

Trivialize $\cF_{B_+}$ on a Zariski open subset of $\P^1$ so
that $\nabla$ has the form \eqref{form of A} with respect to this
trivialization. This trivializes the
bundles $\cV_i$, $\cW_i$, and $\cL_i$ as well, so that the connection
$\nabla_i(z)$ can be expressed in terms of  a $2 \times 2$ matrix whose entries are in $\C(z)$.

A direct computation using formula \eqref{form of A} yields the
following result.

\begin{Lem}    \label{2flagthm}
We have
\begin{equation}    \label{2flagformula}
\nabla_i(z)=\partial_z+
\begin{pmatrix}
  g_i(z)\\
&\\  
  0 & -g_i(z)- \sum_{k\neq i}a_{ki}g_k(z)
 \end{pmatrix},
\end{equation}
\end{Lem}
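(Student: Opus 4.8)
The plan is to compute the $2\times 2$ matrix of $\nabla_i(z)$ directly by letting the connection \eqref{form of A} act on the two-dimensional subspace $W_i = \mathrm{Span}(\nu_{\omega_i}, f_i\cdot\nu_{\omega_i})$ of $V_i$, using the known weights of these two vectors together with the Chevalley relations. First I would record what the three terms of $A(z) = \sum_j \Lambda_j(z) e_j + \sum_j g_j(z)\check\alpha_j$ do to the basis vector $\nu_{\omega_i}$: since $\nu_{\omega_i}$ is a highest weight vector, $e_j\cdot\nu_{\omega_i} = 0$ for all $j$, and $\check\alpha_j\cdot\nu_{\omega_i} = \langle\omega_i,\check\alpha_j\rangle\nu_{\omega_i} = \delta_{ij}\nu_{\omega_i}$. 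Hence $A(z)$ acts on $\nu_{\omega_i}$ by the scalar $g_i(z)$, which gives the top-left entry and the vanishing of the bottom-left entry (note $\cL_i$ is preserved). Here I am using the convention that the matrix is written with respect to the ordered basis $(\nu_{\omega_i}, f_i\cdot\nu_{\omega_i})$ and that $\nabla_i$ acts as $\partial_z + A(z)$ in this trivialization.

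Next I would compute the action of $A(z)$ on $v := f_i\cdot\nu_{\omega_i}$. The key observations are: $e_j\cdot v = e_j f_i\cdot\nu_{\omega_i} = [e_j,f_i]\cdot\nu_{\omega_i} = \delta_{ij}\check\alpha_i\cdot\nu_{\omega_i} = \delta_{ij}\nu_{\omega_i}$, using $e_j f_i = f_i e_j + [e_j,f_i]$, $[e_j,f_i]=\delta_{ij}\check\alpha_i$, and $e_j\cdot\nu_{\omega_i}=0$; and $\check\alpha_j\cdot v = \langle\omega_i-\alpha_i,\check\alpha_j\rangle v = (\delta_{ij} - a_{ji})v$, since $v$ has weight $\omega_i - \alpha_i$ and $a_{ji} = \langle\alpha_i,\check\alpha_j\rangle$. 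Therefore
\begin{equation*}
A(z)\cdot v = \Lambda_i(z)\,\nu_{\omega_i} + \Bigl(\sum_{j=1}^r g_j(z)(\delta_{ij}-a_{ji})\Bigr) v = \Lambda_i(z)\,\nu_{\omega_i} + \Bigl(g_i(z) - \sum_{j} a_{ji}g_j(z)\Bigr) v.
\end{equation*}
Using $a_{ii}=2$, the coefficient of $v$ simplifies to $-g_i(z) - \sum_{j\neq i}a_{ji}g_j(z)$, which is the claimed bottom-right entry; and the coefficient of $\nu_{\omega_i}$ is $\Lambda_i(z)$. At this point one observes that the statement \eqref{2flagformula} as displayed omits this top-right entry $\Lambda_i(z)$ (the matrix there is written as lower-triangular with only $g_i(z)$ in the $(1,1)$-slot), so I would present the computation as giving
\begin{equation*}
\nabla_i(z) = \partial_z + \begin{pmatrix} g_i(z) & \Lambda_i(z) \\ 0 & -g_i(z) - \sum_{k\neq i}a_{ki}g_k(z)\end{pmatrix},
\end{equation*}
and note that the off-diagonal entry $\Lambda_i(z)$ is exactly the oper-type entry making $\nabla_i$ a $\GL(2)$-oper.

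The computation is entirely routine; there is no real obstacle, only bookkeeping. The one point requiring a little care is confirming that $W_i$ is genuinely $B_+$-invariant (so that $\nabla_i$ is well defined on $\cW_i$) and that $\cL_i$ is $\nabla_i$-invariant — both follow from the fact that $e_j$ sends $W_i$ into $W_i$ (shown above: $e_j$ kills $\nu_{\omega_i}$ and sends $v$ to a multiple of $\nu_{\omega_i}$) and $\check\alpha_j$ acts diagonally on the weight basis, while $n_+$ beyond the simple generators annihilates both basis vectors since their weights are maximal in the relevant sense. A second small point: one should check the trivialization of $\cW_i$ induced from that of $\cF_{B_+}$ is compatible with the chosen basis $(\nu_{\omega_i}, f_i\cdot\nu_{\omega_i})$, which is immediate because these are fixed vectors in the fixed representation $V_i$. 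With these checks in place, reading off the matrix from the action of $A(z)$ on the two basis vectors completes the proof.
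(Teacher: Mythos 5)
Your computation is correct and is precisely the ``direct computation using formula \eqref{form of A}'' that the paper invokes without writing out: acting with $\sum_j\Lambda_j(z)e_j+\sum_jg_j(z)\check\alpha_j$ on the weight basis $(\nu_{\omega_i},f_i\cdot\nu_{\omega_i})$ and using $\langle\omega_i,\check\alpha_j\rangle=\delta_{ij}$, $[e_j,f_i]=\delta_{ij}\check\alpha_i$, and $a_{ii}=2$. Your observation that the $(1,2)$ entry should be $\Lambda_i(z)$ is also right --- the blank in \eqref{2flagformula} is a typographical omission, as confirmed by the gauge-transformed form \eqref{cano}, where that entry becomes $\rho_i(z)=\Lambda_i(z)\prod_{k\neq i}y_k(z)^{-a_{ki}}$.
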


Using the trivialization of $\cW_i$ in which $\nabla_i(z)$ has this
form, we can decompose $\cW_i$ as the direct sum of two
line subbundles. The first is $\cL_i$, generated by the basis vector
$\begin{pmatrix} 1 \\ 0 \end{pmatrix}$. The second, which we denote by
$\wt\cL_i$, is generated by the basis vector $\begin{pmatrix} 0 \\
  1 \end{pmatrix}$. The subbundle $\cL_i$ is $\nabla_i$-invariant,
whereas $\nabla_i$ satisfies the following \emph{$\GL(2)$-oper
  condition} with respect to $\wt\cL_i$.

\begin{Def}    \label{GL2}
  A \emph{$\GL(2)$-oper} on $\P^1$ is a triple $(\cW,\nabla,\wt\cL)$,
  where $\cW$ is a rank 2 bundle on $\P^1$, $\nabla: \cW \to \cW\otimes K$ is a
  meromorphic connection on $\cW$, $K$ is the canonical bundle on $\P^1$, and $\wt\cL$ is a line
  subbundle of $\cW$ such that the induced map $\bar{\nabla}:\wt\cL \to
  (\cW/\wt\cL)\otimes K$ is an isomorphism on a Zariski open dense subset of
  $\P^1$.

  A Miura \emph{$\GL(2)$-oper} on $\P^1$ is a quadruple
  $(\cW,\nabla,\wt\cL,\cL)$, where $(\cW,\nabla,\wt\cL)$ is a $\GL(2)$-oper
  and $\cL$ is an $\nabla$-invariant line subbundle of $\cW$.
\end{Def}

Using this definition, one obtains an alternative definition of
(Miura) $\SL(2)$-opers: they are the (Miura) $\GL(2)$-opers defined by
the above triples (resp. quadruples) satisfying the additional
property that in some trivialization on a Zariski-open dense subset of
$\P^1$, the trace of the matrix of the connection is $0$.

Our quadruple $(\cW_i,\nabla,\wt\cL_i,\cL_i)$ is clearly a Miura
$\GL(2)$-oper. It is not clear whether it is an $\SL(2)$-oper
because the trace of the matrix in \eqref{2flagformula}  is not
necessarily $0$.

We now make the further assumption that our Miura $G$-oper 
$(\cF_G,\nabla,\cF_{B_-}$, $\cF_{B_+})$ with regular singularities is
$Z$-twisted (see Definition \ref{ZtwMiura}).  Recall that this implies
that the associated Cartan connection $\nabla^H(z)$ has the form
\eqref{AH1}:
\begin{equation}    \label{AH2}
\nabla^H(z)=\prod_i
y_i(z)^{\check{\alpha}_i} \; (\partial_z+Z^H) \; \prod_i y_i(z)^{-\check{\alpha}_i},
\qquad y_i(z) \in \C(z).
\end{equation}
We claim that for $Z$-twisted Miura opers, there exists
another trivialization of $\cW_i$ in which the connection matrix of $\nabla_i$
has constant (though not necessarily zero) trace. This will be a
particularly convenient gauge for $\nabla_i$.

To prove the claim, let $A_i(z)$ denote the matrix in
\eqref{2flagformula}, and apply the gauge transformation by the
diagonal matrix

$$
 u(z)= \begin{pmatrix} 1 & 0 \\ 0 & \prod_{j\neq i} y_j(z)^{a_{ji}}
\end{pmatrix}.
 $$
This gives
\begin{equation}\label{cano}
\wt{\nabla}_i(z) = u(z)\nabla_i(z)u^{-1}(z)=\partial_z+\begin{pmatrix}
  \zeta_i-y_i(z)^{-1}\partial_z y_i(z)&\rho_i(z)\\
&\\  
  0 & - \sum_{k\neq i}a_{ki}\zeta_k-\zeta_i+y_i(z)^{-1}\partial_z y_i(z))
 \end{pmatrix},
 \end{equation}
 where
 \begin{equation}\label{ri}
 \rho_i(z)=\Lambda_i(z)\prod_{k\neq i}y_k(z)^{-a_{ki}}.
 \end{equation}
 Since $a_{ij} \leq 0$ for $i \neq j$, $\rho_i(z)$
 is a polynomial if all $y_j(z)$'s are polynomials.

Let $G_i\cong \SL(2)$ be the subgroup of $G$ corresponding to the
$\mathfrak{sl}(2)$-triple spanned by $\{e_i, f_i, \check{\alpha}_i\}$.
Note that the group $G_i$ preserves $W_i$. Consider the Miura
$G_i$-oper $(\cW_i,{\hat \nabla}_i,$ $\wt\cL_i,\cL_i)$ with $\wt\cL_i =
\on{span} \left\{ \begin{pmatrix} 0 \\ 1 \end{pmatrix} \right\}$,
$\cL_i = \on{span} \left\{ \begin{pmatrix} 1 \\ 0 \end{pmatrix}
\right\}$,
\begin{equation}\label{Bi}
\hat{\nabla}_i =\partial_z+g_i\check\alpha_i+\rho_i(z)e_i=
\begin{pmatrix}
  \zeta_i-y_i(z)^{-1}\partial_z y_i(z)&\rho_i(z)\\
&\\  
  0 & -\zeta_i+y_i(z)^{-1}\partial_z y_i(z))
 \end{pmatrix},
 \end{equation}
We can now express the connection $\wt{\nabla}_i(z)$ as the sum of an
$\SL(2)$-connection and a constant diagonal matrix:

\begin{align}    \label{tilde calig}
\wt{\nabla}_i(z) &= \begin{pmatrix} 1 & 0 \\ 0 & \sum_{j\ne i}
 -a_{ji}\zeta_j
\end{pmatrix} +{\hat \nabla}_i(z) \\    \label{Ai}
&= \partial_z+\begin{pmatrix} 1 & 0 \\ 0 & \sum_{j\ne i}
 -a_{ji}\zeta_j
\end{pmatrix} +g_i(z){\check \alpha_i} \;
+\rho_i(z)e_i.
\end{align}

This shows that in this gauge, the trace of the matrix of the
connection is constant with value $1-\sum_{j\ne i} a_{ji}\zeta_j$.

Thus, a $Z$-twisted Miura $G$-oper
gives rise to a collection of meromorphic Miura $\SL(2)$-opers
${\hat\nabla}_i(z)$ for $i=1,\ldots,r$. It should be noted that ${\hat\nabla}_i(z)$ has
regular singularities in the sense of Definition~\ref{d:regsing} if
and only if $\rho_i(z)$ is a polynomial. For example, this holds for
all $i$ if all $y_j(z), j=1,\dots,r$, are polynomials.  We will use
this observation below.

\subsection{$Z$-twisted Miura-Pl\"ucker opers}    \label{MP}

Recall that a $Z$-twisted Miura $G$-opers is a 
Miura $G$-oper whose underlying connection can
be written in the form \eqref{gaugeA}:
\begin{equation}    \label{gaugeA2}
\nabla(z)=v(z)(\partial_z+Z) v(z)^{-1}, \qquad v(z) \in B_+(z).
\end{equation}
We will now relax this condition by imposing a twistedness condition
only on the associated Miura $\GL(2)$-opers
$\nabla_i$ (or equivalently, the Miura $\SL(2)$-opers ${\hat
  \nabla}_i$). More precisely, we will require the existence of an
upper triangular gauge transformation $v(z)\in B_+(z)$ such that
\eqref{gaugeA2} holds upon restriction to $W_i$ for all $i$.

\begin{Def}    \label{ZtwMP}
  A $Z$-{\em twisted Miura-Pl\"ucker $G$-oper} is a meromorphic
  Miura $G$-oper on $\P^1$ with underlying connection $\nabla$
  satisfying the following condition: there exists $v(z) \in B_+(z)$
  such that for all $i=1,\ldots,r$, the Miura $\GL(2)$-opers
  $\nabla_i$ associated to $\nabla$ by formula \eqref{2flagformula} can be
  written in the form
\begin{equation}    \label{gaugeA3}
\nabla_i(z) = v(z)(\partial_z +Z) v(z)^{-1}|_{W_i} = v_i(z)(\partial_z +Z_i) v_i(z)^{-1},
\end{equation}
where $v_i(z) = v(z)|_{W_i}$ and $Z_i = Z|_{W_i}$.
\end{Def}

In other words, a Miura $G$-oper is a $Z$-twisted
Miura-Pl\"ucker $G$-oper precisely when there is a trivialization of
$\cF_{B_+}$ in which all of the associated connections $\nabla_i$ have
the constant matrix $Z_i\in\mathfrak{gl}(2)$.  It is a $Z$-twisted
Miura $G$-oper if $\nabla$ has the constant matrix $Z$ in this gauge.
Thus, every $Z$-twisted Miura $G$-oper is automatically a
$Z$-twisted Miura-Pl\"ucker $G$-oper, but the converse is not
necessarily true if $G \neq \SL(2)$.

Note, however, that it follows from the above definition that the
$H$-connection $\nabla^H$ associated to a $Z$-twisted
Miura-Pl\"ucker $G$-oper can be written in the same form
\eqref{AH2} as the $H$-connection associated to a $Z$-twisted
Miura $G$-oper.

\subsection{$H$-nondegeneracy}    \label{H nondeg}

We now introduce the notion of $H$-nondegeneracy, the first of our two nondegeneracy conditions for $Z$-twisted
Miura-Pl\"ucker opers. This condition actually applies to arbitrary Miura
opers with regular singularities. Recall from Theorem \ref{gen elt1} that the underlying connection can be represented in the
form \eqref{form of A}.

\begin{Def} \label{nondeg Cartan} A Miura $G$-oper $\nabla$ of the
  form \eqref{form of A} is called $H$-\emph{nondegene\-rate} if the
  corresponding $H$-connection $\nabla^H(z)$ can be written in the
  form \eqref{AH1}, with the rational functions $y_i(z)$ satisfying
  the following conditions: 
  \begin{enumerate}\item $y_i(z)$ has no multiple zeros
    or poles;
     \item for all $i$,  the roots of
      $\Lambda_i(z)$ are distinct from the the zeros and poles of
      $y_i(z)$; and
      \item if $i\ne j$ and $a_{ij}\ne 0$, then the zeros and poles of $y_i(z)$ and
  $y_j(z)$ are distinct from each other.
\end{enumerate}
\end{Def}


\subsection{Nondegenerate $Z$-twisted Miura
  $\SL(2)$-opers}    \label{nondeg sl2}

We now turn to the second nondegeneracy condition. This condition
applies to $Z$-twisted Miura-Pl\"ucker $G$-opers.   In this
subsection, we give the definition for $G=\SL(2)$. (Note that
$Z$-twisted Miura-Pl\"ucker $\SL(2)$-opers are the same as
$Z$-twisted Miura $\SL(2)$-opers.) In the next subsection, we will
give the definition for an arbitrary simple, simply connected complex
Lie group $G$.

Consider a Miura $\SL(2)$-oper given by the formula \eqref{form of A},
which for $\SL(2)$ becomes
$$
\nabla=\partial_z+{\check{\alpha}}g(z)+\Lambda(z)e  = \partial_z+\begin{pmatrix}
   g(z) & \Lambda(z) \\
   0 & -g(z)
  \end{pmatrix}.
$$
The corresponding Cartan
connection is given by
$$
\nabla^H(z)=\partial_z+g(z){\check{\alpha}} = y(z)^{\check{\alpha}}(\partial_z+Z^H)
y(z)^{-\check{\alpha}} = \partial_z+\begin{pmatrix}
   \zeta-y(z)^{-1}\partial_zy(z) & 0 \\
   0 & -\zeta+ y(z)^{-1}\partial_zy(z)
  \end{pmatrix},
$$
where $y(z)$ is a rational function. Let us assume that $A(z)$ is
$H$-nondegenerate, so that the zeros of $\Lambda(z)$ are distinct from the zeros and
poles of $y(z)$.

If we apply a gauge transformation by an element
$h(z)^{\check\alpha} \in H[z]$ to $\nabla$, we obtain a new oper
connection
\begin{equation}    \label{wtA}
\wt{\nabla}(z) = \partial_z+\wt{g}(z){\check{\alpha}} +\widetilde{\Lambda}(z)e,
\end{equation}
where
\begin{equation}    \label{wtg}
\wt{g}(z) = g(z) -h^{-1}(z) \partial_zh(z), \qquad \wt\Lambda(z) =
\Lambda(z) h(z)^2.
\end{equation}
It also has regular singularities, but for a
different polynomial $\wt{\Lambda}(z)$, and $\wt{\nabla}(z)$ may no longer
be $H$-nondegenerate.  However, it turns out there is an essentially
unique gauge transformation from $H[z]$ for which the resulting
$\wt{\nabla}(z)$ is $H$-nondegenerate  and $\wt{y}(z)$
is a polynomial.  This choice allows us to fix the polynomial
$\Lambda(z)$ determining the regular singularities of our
$\SL(2)$-oper.

\begin{Lem}    \label{nondegsl2}
\begin{enumerate}
\item There is an $H$-nondegenerate $\SL(2)$-oper $\wt{\nabla}(z)$ in
  the $H[z]$-gauge class of $\nabla$, say with
  $\wt{\nabla}^H(z)=\partial_z+\wt{g}(z){\check{\alpha}}$, for which the rational
  function $\wt{y}(z)$ is a polynomial. This oper is unique up to a
  scalar $a \in \C^\times$ that leaves $\wt{g}(z)$ unchanged, but
  multiplies $\wt{y}(z)$ and $\wt{\Lambda}(z)$ by $a$ and $a^2$
  respectively.
\item This $\SL(2)$-oper $\wt{\nabla}$ may also be characterized by
  the property that $\wt{\Lambda}(z)$ has maximal degree subject to
  the constraint that it is $H$-nondegenerate.
\end{enumerate}
\end{Lem}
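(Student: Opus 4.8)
The plan is to translate the $H[z]$-gauge action on the Miura $\SL(2)$-oper into a concrete question about factoring the rational function $y(z)$, and then to locate the unique "polynomial" representative by separating zeros from poles. Write $y(z) = P(z)/Q(z)$ with $P,Q$ monic coprime polynomials. A gauge transformation by $h(z)^{\check\alpha}$ with $h\in\C[z]$ changes $y(z)$ to $\wt y(z) = h(z)\, y(z)$ (this is exactly \eqref{wtg}: since $\wt g = g - h^{-1}\partial_z h$ and $g = \zeta - y^{-1}\partial_z y$, we get $\wt g = \zeta - (hy)^{-1}\partial_z(hy)$, so $\wt y = hy$ up to scalar). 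Thus the $H[z]$-gauge orbit of $y$ inside rational functions consists exactly of the functions $h(z)P(z)/Q(z)$ for $h\in\C[z]$. For such a function to be a polynomial we need $Q(z) \mid h(z)$; the unique minimal choice (up to scalar) is $h(z) = a\, Q(z)$, giving $\wt y(z) = a\, P(z)$ and, by \eqref{wtg}, $\wt\Lambda(z) = a^2 Q(z)^2\,\Lambda(z)$. Any other polynomial in the orbit is $\C[z]$-multiple of this one, hence of the form $a f(z) P(z)$ for $f\in\C[z]$, so uniqueness holds precisely up to the scalar $a$; note $\wt g$ only depends on the ratio $\wt y'/\wt y$ and so is unchanged, while $\wt y$ scales by $a$ and $\wt\Lambda$ by $a^2$, as claimed.

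For part (1) it remains to check that this particular $\wt\nabla$ is $H$-nondegenerate. Since $P$ has no repeated roots — we are given $A(z)$ is $H$-nondegenerate, so $y = P/Q$ has no multiple zeros or poles, i.e. $P$ and $Q$ are each squarefree — condition (1) of Definition~\ref{nondeg Cartan} holds for $\wt y = aP$ (it is a polynomial with simple roots). For condition (2) we must see that the roots of $\wt\Lambda(z) = a^2 Q(z)^2 \Lambda(z)$ are distinct from the zeros of $\wt y = aP$. By $H$-nondegeneracy of the original oper, the roots of $\Lambda$ avoid the zeros of $y$, i.e. avoid the roots of $P$; and the roots of $Q$ are poles of $y$, which by hypothesis (1) are disjoint from the zeros of $y$, i.e. from the roots of $P$. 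So the roots of $Q^2\Lambda$ avoid the roots of $P$, which is exactly condition (2). Condition (3) is vacuous for $\SL(2)$. This proves (1).

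For part (2), I would argue by comparing degrees. For any $H$-nondegenerate representative $\wt\nabla$ in the gauge class, write the corresponding $\wt y = P_1/Q_1$ with $P_1, Q_1$ monic coprime and squarefree; by the orbit description $P_1/Q_1 = h P/Q$ for some rational $h$, hence $P_1 Q = h P Q_1$, and coprimality forces $Q_1 \mid Q$ and (since the gauge factor $h$ is a polynomial) $Q \mid Q_1$... more precisely, the admissible $h$ are exactly the polynomials, so $Q_1$ must divide $Q$ and $P$ must divide $P_1$; writing $P_1 = f P$ we get $\wt\Lambda_{(1)}(z) = $ (scalar)$\cdot (Q/Q_1)^2 \Lambda(z)$, whence $\deg\wt\Lambda_{(1)} = \deg\Lambda + 2\deg Q - 2\deg Q_1 \le \deg\Lambda + 2\deg Q = \deg\wt\Lambda$, with equality iff $Q_1$ is constant, i.e. iff $\wt y$ is a polynomial. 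So the polynomial representative from (1) is exactly the one of maximal $\wt\Lambda$-degree, and conversely any $H$-nondegenerate representative of maximal degree must have $\wt y$ polynomial, hence agrees with it up to the scalar $a$ from part (1).

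The main obstacle — really the only delicate point — is bookkeeping the interaction of the three $H$-nondegeneracy conditions under the gauge change: one must be careful that clearing the denominator $Q(z)$ of $y(z)$ does not create a collision between a new root of $\wt\Lambda$ (coming from $Q(z)^2$) and a zero of $\wt y$. This is handled by invoking condition (1) of the original oper (poles of $y$ are disjoint from zeros of $y$), so there is no genuine difficulty; the rest is the elementary observation that the $H[z]$-orbit of $y$ is its multiplication orbit by $\C[z]$ and that a squarefree-denominator rational function has a unique minimal polynomial multiple up to scalar.
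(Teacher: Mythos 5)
Your approach is the same as the paper's: write $y=P/Q$ in lowest terms, observe that gauging by $h(z)^{\check\alpha}$ sends $y\mapsto hy$ and $\Lambda\mapsto h^2\Lambda$, force $Q\mid h$ to make $\wt{y}$ a polynomial, and compare degrees for part (2). The existence half of (1) is done carefully --- indeed more carefully than the paper, which does not bother to verify $H$-nondegeneracy of the representative $h=aQ$.

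The one place where you assert rather than argue is the uniqueness in (1), and the same omission resurfaces in (2). After noting that the polynomial representatives of the orbit are exactly $\wt{y}=af(z)P(z)$ with $f\in\C[z]$, you conclude ``so uniqueness holds precisely up to the scalar $a$.'' That does not yet follow: the lemma claims uniqueness among \emph{$H$-nondegenerate} representatives with $\wt{y}$ polynomial, so you must rule out nonconstant $f$. The missing sentence is that for $\deg f>0$ the roots of $f$ are common zeros of $\wt{y}=afP$ and of $\wt{\Lambda}=a^2f^2Q^2\Lambda$, violating condition (2) of Definition~\ref{nondeg Cartan} --- exactly the collision argument you already deploy for existence, applied in the other direction. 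In part (2) the gap is the step ``writing $P_1=fP$ we get $\wt{\Lambda}_{(1)}=(\text{scalar})\cdot(Q/Q_1)^2\Lambda$'': since $h=(P_1/P)(Q/Q_1)$, the correct formula is $\wt{\Lambda}_{(1)}=f^2(Q/Q_1)^2\Lambda$, and without first forcing $f$ to be constant the degree acquires an extra $2\deg f$ and the maximality comparison breaks (a large $f$ could make $\deg\wt{\Lambda}_{(1)}$ arbitrarily big). Once nonconstant $f$ is excluded by the same $H$-nondegeneracy collision, both parts close and your proof coincides with the paper's.
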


\begin{proof} Write $y(z)=\frac{P_1(z)}{P_2(z)}$, where $P_1,P_2$ are
  relatively prime polynomials. For a nonzero polynomial $h(z) \in
  \C(z)^\times$, the gauge transformation of $\nabla$ by
  $h(z)^{\check{\alpha}}$ is given by formulas \eqref{wtA} and
  \eqref{wtg}.  In order for
  $\wt{y}(z)=h(z)\frac{P_1(z)}{P_2(z)}$ to be a polynomial, we need
  $h(z)$ to be divisible by $P_2(z)$. If, however, $\deg(h/P_2)>0$,
  then $\wt{y}(z)$ and $\wt{\Lambda}(z)$ would have a zero in common,
  so $\wt{A}(z)$ would not by $H$-nondegenerate.  Hence, we must have
  $h(z)=aP_2(z)$ for some $a\in\C^\times$. Thus, $h(z)$ is uniquely
  defined by multiplication by $a$, which leave $\wt{g}(z)$ unchanged,
  but multiplies $\wt{y}(z)$ and $\wt{\Lambda}(z)$ by $a$ and $a^2$
  respectively.

  For the second statement, note that if $h(z)$ is a polynomial for
  which the zeros of $h(z)^2\Lambda(z)$ are distinct from the
  zeros and poles of $h(z)\frac{P_1(z)}{P_2(z)}$, we must have
  $h|P_2$.  If $h(z)$ is not an associate of $P_2(z)$, we have
  $\deg(h)<\deg(P_2)$, so
  $\deg(h(z)^2\Lambda(z))<\deg(\wt{\Lambda})$.
\end{proof}

This motivates the following definition.

\begin{Def}    \label{ngsl2}
  A $Z$-twisted Miura $\SL(2)$-oper is called \emph{nondegenerate}
  if it is $H$-nondegenerate and the rational function $y(z)$
  appearing in formula \eqref{AH1} is a polynomial. 
\end{Def}

\subsection{Nondegenerate  Miura-Pl\"ucker
  $G$-opers}    \label{s:nondegenerate}

We now turn to the general case. Recall that to every
Miura-Pl\"ucker $G$-oper $\nabla$, we have associated a Miura
$\SL(2)$-oper 
${\hat \nabla}_i(z), i=1,\ldots,r$, given by formula \eqref{Bi}. (It is
obtained from the Miura $\GL(2)$-oper $\nabla_i = \nabla|_{W_i}$ using
formulas \eqref{cano} and \eqref{tilde calig}). It follows from the
definition that if $\nabla$ is $Z$-twisted  with $Z$ given by \eqref{Z},
then ${\hat \nabla}_i$ is $\zeta_i\check\alpha_i$-twisted.

\begin{Def}    \label{nondeg Miura}
  Suppose that the rank of $G$ is greater than 1. A $Z$-twisted
  Miura-Pl\"ucker $G$-oper $A(z)$ is called \emph{nondegenerate} if
  it is $H$-nondegenerate and
  each $\zeta_i\check\alpha_i$-twisted Miura
  $\SL(2)$-oper ${\hat \nabla}_i(z)$ is nondegenerate.
\end{Def}

It turns out that this simply means that in addition to $\nabla$ being
$H$-nondegenerate, each $y_i(z)$ from formula \eqref{AH1} is a
polynomial. 

\begin{Prop}    \label{nondeg1}
  Let $\nabla$ be a
  $Z$-twisted Miura-Pl\"ucker $G$-oper.
  The following statements are equivalent:
\begin{enumerate}
\item\label{nondegen1} $\nabla$ is nondegenerate.
  \item\label{nondegen2} $\nabla$ is $H$-nondegenerate, and each
    ${\hat \nabla}_i(z)$ has regular singularities, i.e. $\rho_i(z)$ given
    by formula \eqref{ri} is in $\C[z]$.
    \item\label{nondegen3} Each $y_i(z)$ from formula \eqref{AH1} may
      be chosen to be a monic
      polynomial, and these polynomials satisfy the conditions in
      Definition~\ref{nondeg Cartan}.   
\end{enumerate}
\end{Prop}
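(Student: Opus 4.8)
The plan is to prove the three equivalences by a cycle of implications, using the explicit formulas built up in Sections 2 and 3. First I would unpack the definitions: by Definition~\ref{nondeg Miura}, statement \eqref{nondegen1} says that $\nabla$ is $H$-nondegenerate and each $\zeta_i\check\alpha_i$-twisted Miura $\SL(2)$-oper $\hat\nabla_i(z)$ from \eqref{Bi} is nondegenerate in the sense of Definition~\ref{ngsl2}; since $\hat\nabla_i$ has Cartan connection governed by the same $y_i(z)$ appearing in \eqref{AH1}, nondegeneracy of $\hat\nabla_i$ means precisely that $y_i(z)$ can be taken to be a polynomial (given $H$-nondegeneracy, which already forces $y_i$ to have no multiple zeros or poles). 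So \eqref{nondegen1} $\iff$ ``$\nabla$ is $H$-nondegenerate and each $y_i(z)$ is (up to scalar) a polynomial.''

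Next I would handle \eqref{nondegen1} $\iff$ \eqref{nondegen3}. The forward direction is nearly immediate from the unpacking above: if each $y_i(z)$ is a polynomial, we may normalize it to be monic (absorbing the scalar ambiguity noted after \eqref{giyi} and in Lemma~\ref{nondegsl2}(1)), and the three conditions of Definition~\ref{nondeg Cartan} are just the statement that $\nabla$ is $H$-nondegenerate, which holds by hypothesis. The reverse direction is equally direct: monic polynomial $y_i(z)$ satisfying the conditions of Definition~\ref{nondeg Cartan} is exactly $H$-nondegeneracy plus polynomiality, which is \eqref{nondegen1}.

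The substantive equivalence is \eqref{nondegen1} $\iff$ \eqref{nondegen2}, which hinges on formula \eqref{ri}, $\rho_i(z)=\Lambda_i(z)\prod_{k\neq i}y_k(z)^{-a_{ki}}$, together with the observation (already made in the excerpt, just after \eqref{ri}) that since $a_{ki}\le 0$ for $k\neq i$, each exponent $-a_{ki}\ge 0$, so $\rho_i(z)$ is automatically a polynomial once all the $y_k(z)$ with $a_{ki}\neq 0$ are polynomials. Thus \eqref{nondegen1} $\Rightarrow$ \eqref{nondegen2} is easy. For the converse, assume $\nabla$ is $H$-nondegenerate and every $\rho_i(z)\in\C[z]$; I must deduce that each $y_i(z)$ is a polynomial. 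Write $y_i(z)=P^i_1(z)/P^i_2(z)$ in lowest terms. If some $y_i$ had a genuine pole, i.e. $\deg P^i_2>0$, pick a root $\beta$ of $P^i_2$; then $y_i(z)^{-a_{ji}}$ contributes a pole at $\beta$ to $\rho_j(z)$ for any $j$ with $a_{ji}\neq 0$. For $\rho_j(z)$ nonetheless to be a polynomial, this pole must be cancelled, but the only other factors in \eqref{ri} are $\Lambda_j(z)$ (a polynomial, no poles) and $y_k(z)^{-a_{jk}}$ for $k\neq i,j$; by $H$-nondegeneracy condition (3), the poles of $y_k$ are disjoint from those of $y_i$, so no cancellation occurs, a contradiction — \emph{provided} the Dynkin diagram is connected enough that such a $j$ exists. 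This is the one place care is needed: for $i$ a simple root, there is always some $j$ with $a_{ji}\neq 0$ (namely any neighbor of $i$ in the Dynkin diagram, which exists since $\fg$ is simple of rank $>1$), so the argument closes. Hence every $y_i(z)$ is a polynomial, giving \eqref{nondegen1}.

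I expect the main obstacle to be precisely that bookkeeping in the converse of \eqref{nondegen1} $\iff$ \eqref{nondegen2}: one must be careful that the pole of $y_i$ propagated into $\rho_j$ cannot be accidentally cancelled by $\Lambda_j$ or by another $y_k$, and this is where $H$-nondegeneracy conditions (2) and (3) are used in an essential way, as is the irreducibility of the root system (connectedness of the Dynkin diagram). Everything else is a routine unwinding of the definitions and of formulas \eqref{AH1}, \eqref{Bi}, and \eqref{ri}.
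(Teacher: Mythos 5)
Your proposal is correct and follows essentially the same route as the paper: the only substantive step in both is the converse direction, where a pole of $y_i(z)$ is propagated into a pole of $\rho_j(z)$ for a Dynkin neighbor $j$ (using $a_{ij}\neq 0$, which exists since the rank is greater than one), with the $H$-nondegeneracy conditions ruling out cancellation, while the remaining implications are definitional unwinding. The paper organizes this as a cycle $(1)\Rightarrow(2)\Rightarrow(3)\Rightarrow(1)$ rather than your two biconditionals, but the content is identical; your explicit remark about needing a neighbor $j$ is a point the paper glosses over.
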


\begin{proof} To prove that \eqref{nondegen2} implies
  \eqref{nondegen3}, we need only show that if each $\rho_i(z)$ given
  by formula \eqref{ri} is in $\C[z]$, then the $y_i(z)$'s are
  polynomials.  Suppose $y_i(z)$ is not a polynomial, and choose $j\ne
  i$ such that $a_{ij} \neq 0$.  Then $-a_{ij}>0$, and so the
  denominator of $y_i(z)$ appears in the denominator of
  $\rho_j(z)$.  Moreover, since the poles of $y_i(z)$ are distinct
  from the zeros of $\Lambda_j(z)$ and the other $y_k(z)$'s, the poles
  of $y_i(z)$  give rise to poles of
  $\rho_j(z)$. But then ${\hat \nabla}_j(z)$ would not have regular
  singularities.

  Next, assume \eqref{nondegen3}.  By
  Definition \ref{nondeg Cartan}, $\nabla$ is $H$-nondegenerate.  Since all the $y_i(z)$'s are
  polynomials, the same is true for the $\rho_i(z)$'s.  (Here, we
  are using the fact that the off-diagonal elements of the Cartan
  matrix, $a_{ij}$ with $i\neq j$, are less than or equal to 0.)
  Since $\rho_i(z)$ is a product of polynomials whose roots are
  distinct from the roots of $y_i(z)$, we see that the Cartan
  connection associated to ${\hat \nabla}_i(z)$ is nondegenerate.

Finally, \eqref{nondegen2} is a trivial consequence of
\eqref{nondegen1}.
\end{proof}

If we apply a gauge transformation by an element $h(z)\in H[z]$ to
$\nabla$, we get a new $Z$-twisted Miura-Pl\"ucker $G$-oper.
However, the following proposition shows that it is only nondegenerate
if $h(z)\in H$.  As a consequence, the $\Lambda_k$'s of a
nondegenerate oper are determined up to scalar multiples. If we further impose the condition that each $y_i(z)$ is a {\em monic}
polynomial, then $h(z)=1$, and this fixes the $\Lambda_k$'s.

\begin{Prop} If $\nabla$ is a nondegenerate $Z$-twisted Miura-Pl\"ucker
  $G$-oper and $h(z)\in H[z]$, then $h(z)\nabla h(z)^{-1}$ is
  nondegenerate if and only if $h(z)$ is a constant element of $H$.
\end{Prop}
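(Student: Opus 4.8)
The plan is to reduce the assertion to the rank-one case by tracking how an $H[z]$-gauge transformation acts on the associated Miura $\SL(2)$-opers $\hat\nabla_i$ of \eqref{Bi}, and then to play the nondegeneracy of $\nabla$ off against that of $h(z)\nabla h(z)^{-1}$ via Proposition \ref{nondeg1}. Write $h(z)=\prod_{k=1}^r h_k(z)^{\check{\alpha}_k}$ with $h_k(z)\in\C(z)^\times$. The converse direction is immediate: if $h(z)\in H$, every $h_k$ is a nonzero constant, so gauging $\nabla$ by $h(z)$ leaves each $g_i(z)$ and hence the associated Cartan connection unchanged — so the $y_i(z)$ may be kept as the same monic polynomials — and multiplies each $\Lambda_i(z)$, hence by \eqref{ri} each $\rho_i(z)$, by a nonzero scalar. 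Scalars do not move zeros, so all the conditions of Definition \ref{nondeg Cartan} persist, and $h(z)\nabla h(z)^{-1}$ is nondegenerate by Proposition \ref{nondeg1}.

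For the forward direction the first step is the key computation: the gauge action of $h(z)=\prod_k h_k(z)^{\check{\alpha}_k}$ on $\nabla$ induces, on each $\hat\nabla_i$, precisely the rank-one $H[z]$-gauge transformation by $h_i(z)^{\check{\alpha}_i}$. Concretely this transformation sends $g_i(z)\mapsto g_i(z)-h_i(z)^{-1}\partial_z h_i(z)$, so since the associated Cartan connection $\nabla^H$ determines the $y_i(z)$ only up to a scalar, it replaces $y_i(z)$ by a scalar multiple of $h_i(z)y_i(z)$; and since $h(z)e_ih(z)^{-1}=\bigl(\prod_k h_k(z)^{a_{ki}}\bigr)e_i=h_i(z)^2\bigl(\prod_{k\neq i}h_k(z)^{a_{ki}}\bigr)e_i$, it sends $\Lambda_i(z)\mapsto h_i(z)^2\bigl(\prod_{k\neq i}h_k(z)^{a_{ki}}\bigr)\Lambda_i(z)$. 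Substituting these into \eqref{ri}, the factors $\prod_{k\neq i}h_k(z)^{a_{ki}}$ coming from $\Lambda_i$ and $\prod_{k\neq i}h_k(z)^{-a_{ki}}$ coming from the $y_k$ cancel, so $\rho_i(z)$ is replaced by a scalar multiple of $h_i(z)^2\rho_i(z)$. Writing $\wt\nabla=h(z)\nabla h(z)^{-1}$, the $i$-th associated $\SL(2)$-oper of $\wt\nabla$ thus has Cartan function $\wt g_i=g_i-h_i^{-1}\partial_z h_i$, with $\wt y_i(z)\propto h_i(z)y_i(z)$ and regular-singularity polynomial $\wt\rho_i(z)\propto h_i(z)^2\rho_i(z)$.

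Now suppose $\wt\nabla$ is nondegenerate. Proposition \ref{nondeg1} applied to $\nabla$ says each $y_i(z)$ is a monic polynomial, each $\rho_i(z)\in\C[z]$, and $\gcd(\rho_i,y_i)=1$ — the last because, by Definition \ref{nondeg Cartan}(2),(3), the zeros of $\rho_i$ are those of $\Lambda_i$ together with those of the $y_k$ with $k\neq i$ and $a_{ki}\neq 0$, all of which are distinct from the zeros of $y_i$. Applied to $\wt\nabla$, it says $\wt y_i(z)\propto h_i(z)y_i(z)$ and $\wt\rho_i(z)\propto h_i(z)^2\rho_i(z)$ are again polynomials with $\gcd(\wt\rho_i,\wt y_i)=1$. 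Fix $i$: from $h_iy_i\in\C[z]$ and $h_i^2\rho_i\in\C[z]$ together with $\gcd(\rho_i,y_i)=1$, a short divisibility argument forces $h_i(z)$ itself to be a polynomial; but then every zero of $h_i$ would be a common zero of $\wt\rho_i$ and $\wt y_i$, contradicting $\gcd(\wt\rho_i,\wt y_i)=1$ unless $h_i$ has no zeros. Hence each $h_i(z)$ is a nonzero constant and $h(z)\in H$. (For $G=\SL(2)$ there is a single associated $\SL(2)$-oper, equal to $\nabla$ itself, and the same argument applies directly.)

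The step I expect to be the main obstacle is the key computation of the second paragraph: one must carry along the harmless scalar ambiguities in the $y_i$'s and verify that the exponents in \eqref{ri} cancel cleanly enough that every $h_k$ with $k\neq i$ vanishes from $\wt\rho_i$, leaving the purely rank-one relations $\wt\rho_i\propto h_i^2\rho_i$ and $\wt g_i=g_i-h_i^{-1}\partial_z h_i$. After that the remaining divisibility argument is routine, and is in essence the $\SL(2)$ content already present in Lemma \ref{nondegsl2}.
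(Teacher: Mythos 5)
Your proof is correct and takes essentially the same route as the paper's: decompose $h(z)=\prod_i h_i(z)^{\check{\alpha}_i}$, check that gauging $\nabla$ by $h(z)$ induces on each associated $\SL(2)$-oper $\hat\nabla_i$ exactly the rank-one gauge transformation by $h_i(z)^{\check{\alpha}}$ (with $\rho_i$ playing the role of $\Lambda$), and conclude from the rank-one uniqueness statement. The only difference is presentational: the paper cites Lemma~\ref{nondegsl2} for the final step, whereas you re-derive its divisibility argument inline.
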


\begin{proof} Write $h(z)=\prod h_i(z)^{\check{\alpha}_i}$.  Gauge
  transformation of $\nabla$ by $h(z)$ induces a gauge transformation of
  $\nabla_i$ by $h_i(z)$.  Since $\nabla_i$ is nondegenerate,
  Lemma~\ref{nondegsl2} implies that the new Miura $\SL(2)$-oper
  is nondegenerate if and only $h_i\in\C^\times$.
\end{proof}

\section{$\SL(2)$-opers and the Bethe Ansatz equations}

Before exploring the relationship between Miura $G$-opers and the
Bethe Ansatz equations in general, we briefly describe what happens
for $G=\SL(2)$.  These results are immediate corollaries of the
results in the following sections.  However, in this case, one can
give simpler proofs; see \cite{KSZ} for the details.

Let $Z^H=\diag(\zeta,-\zeta)$.  A nondegenerate $Z^H$-twisted Miura
$\SL(2)$-oper can be represented in matrix form as
\begin{equation*}
\nabla(z)=\partial_z+(\zeta-y(z)^{-1}\partial_zy(z))\check{\alpha}+\Lambda(z)e=\begin{pmatrix}
   \zeta-y(z)^{-1}\partial_zy(z) & \Lambda(z) \\
   0 & -\zeta+ y(z)^{-1}\partial_zy(z)
  \end{pmatrix},
\end{equation*}
where the polynomials $y(z)$ and $\Lambda(z)$ have no roots in common
and $y(z)$ is monic with no multiple roots.  This connection is gauge
equivalent to $\partial_z+\zeta\check \alpha+\Lambda(z)e$ via a gauge
transformation by a matrix of the form
\begin{equation*} v(z)=y(z)^{\check\alpha}e^{\frac{q_-(z)}{q_+(z)}e},
\end{equation*}
where $q_-(z),q_+(z)$ are relatively prime polynomials with $q_+(z)$
monic.

One can now show that $y(z)=q_+(z)$ and the polynomials $q_+(z)$ and
$q_-(z)$ satisfy the following differential equation involving their
Wronskian:
\begin{equation*}
q_{+}(z)\partial_z q_-(z)-q_{-}(z)\partial_z q_+(z)+2\zeta q_+(z)q_-(z)=\Lambda(z)
\end{equation*}
This is the $\SL(2)$-version of a system of equations called the
$qq$-system.   In fact, there is a bijection between nondegenerate
$Z^H$-twisted Miura opers together with a choice of the matrix $v(z)$ and nondegenerate polynomial solutions of
the $qq$-system; here, a polynomial solution of the $qq$-system is called
nondegenerate if $q_{+}(z)$ is monic with no multiple roots and has no
roots in common with $\Lambda(z)$.

Nondegenerate solutions lead to solutions of the Bethe
Ansatz equation for the inhomogeneous Gaudin model.  Indeed, let
$\Lambda(z)=\prod^N_{k=1}(z-z_k)^{\ell_i}$ and
$q_+(z)=\prod^n_{i=1}(z-w_i)$ with $w_i\ne w_{j}$ if $i\ne j$ and
$w_i\ne z_k$.  One can then show that
\begin{equation}
2\zeta+\sum^N_{k=1}  \frac{\ell_k}{w_i-z_k}-\sum^n_{k=1} \frac{2}{w_i-w_k}=0, \quad k=1,\dots, r.
\end{equation}
In fact, there is a one-to-one correspondence between $Z^H$-twisted
Miura opers and solutions of the Bethe Ansatz equation.

\section{Miura-Pl\"ucker opers, Wronskian relations, and the Bethe Ansatz
  equations for the Gaudin model}    \label{Sec:QQsystem}

We now return to the general situation, with $G$ an arbitrary simple, simply connected
complex Lie group.  We show that a $Z$-twisted Miura-Pl\"ucker
$G$-oper is also $Z^H$-twisted.  We then establish a one-to-one
correspondence between the set of nondegenerate $Z^H$-twisted
Miura-Pl\"ucker $G$-opers and the set of 
solutions of a system of Bethe Ansatz equations associated to
$G$. A key
element of the construction is an intermediate object between these
two sets: solutions to a system of nonlinear differential equations
called the \emph{$qq$-system}, which imposes relations on certain
Wronskians indexed by the simple roots.

\subsection{Reduction to the semisimple case}

Let $\nabla $ be a $Z$-twisted Miura-Pl\"ucker oper for $Z\in\fb_+$.
As in \eqref{Z}, we write $Z=Z^H+\sum^r_{i=1}c_ie_i+n_+$ with $Z^H=
\sum^r_{i=1}\zeta_i\check\alpha_i\in\mathfrak{h}$ and $n_+\in[\mathfrak{n}_+,\mathfrak{n}_+]$. 

We now show that a $Z$-twisted  Miura-Pl\"ucker oper is also
$Z^H$-twisted. 

\begin{Prop}
i) There exist an element $u(z)\in N_+(z)$ so that $u(z)(\partial_z+Z)u(z)^{-1}= 
\partial_z+Z^H+\tilde{n}_+(z)$, where 
 $\tilde{n}_+(z)\in [\mathfrak{n}_+,\mathfrak{n}_+](z)$.\\
ii) Any $Z$-twisted Miura-Pl\"ucker oper is $Z^H$-twisted.
\end{Prop}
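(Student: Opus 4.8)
For part (i), the plan is to build $u(z)$ by a descending induction on the lower central series of $\mathfrak{n}_+$. Write $Z = Z^H + n$ where $n = \sum_i c_i e_i + n_+$, with $\sum_i c_i e_i$ the simple-root part and $n_+ \in [\mathfrak{n}_+,\mathfrak{n}_+]$. First I would kill the simple-root part: conjugating $\partial_z + Z^H + n$ by a constant element $\exp(\sum_i b_i e_i) \in N_+$ changes the $e_i$-coefficient by $[\,\cdot\,, Z^H]$ acting on $\sum b_i e_i$, i.e. by $-\langle\alpha_i, Z^H\rangle b_i$, plus higher-order corrections living in $[\mathfrak{n}_+,\mathfrak{n}_+]$. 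If $\langle \alpha_i, Z^H \rangle \neq 0$ for all $i$ (the regular case) one solves for $b_i$ directly; in general, since we only need the result to land in $[\mathfrak{n}_+,\mathfrak{n}_+](z)$, the key observation is that the simple-root part of the connection form is $z$-independent and equals $\sum_i c_i e_i$, and the oper structure is irrelevant here — we are just conjugating the constant connection $\partial_z + Z$. The cleanest route is to proceed grading-by-grading in $\mathfrak{n}_+ = \bigoplus_{h\geq 1} \mathfrak{n}_+^{(h)}$ (by root height): having arranged that the form is $\partial_z + Z^H + (\text{terms of height} \geq h)$, conjugate by $\exp(w_h(z))$ with $w_h(z) \in \mathfrak{n}_+^{(h)}(z)$ chosen so that $\partial_z w_h - [w_h, Z^H] = -(\text{height-}h \text{ part of current form})$; because $\operatorname{ad} Z^H$ preserves each $\mathfrak{n}_+^{(h)}$ and the correction terms from lower conjugations have height $>h$, this is a linear ODE in each root-space coordinate that is solvable over $\C(z)$. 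After the height-$1$ step the form already lies in $[\mathfrak{n}_+,\mathfrak{n}_+](z)$, and subsequent steps keep it there; composing all the $\exp(w_h(z))$ gives the desired $u(z) \in N_+(z)$.

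For part (ii), suppose $\nabla$ is a $Z$-twisted Miura-Pl\"ucker $G$-oper. By Definition~\ref{ZtwMP} there exists $v(z) \in B_+(z)$ such that, restricted to each $W_i$, we have $\nabla_i(z) = v_i(z)(\partial_z + Z_i)v_i(z)^{-1}$. I would argue that the full connection $\nabla$ is then $B_+(z)$-gauge equivalent to $\partial_z + Z$: indeed, by Theorem~\ref{gen elt1} the underlying connection of $\nabla$ already has the form \eqref{form of A}, so its only non-Cartan off-diagonal part consists of the simple-root terms $\Lambda_i(z) e_i$, and the Miura-Pl\"ucker condition on the $\GL(2)$-opers $\nabla_i$ pins down exactly the data $g_i(z)$ and the $e_i$-coefficient. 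Comparing \eqref{gaugeA3} with \eqref{2flagformula}–\eqref{AH1} shows that $\nabla$ itself is gauge equivalent via some $v(z)\in B_+(z)$ to a connection of the form $\partial_z + Z^H + (\text{stuff in }\mathfrak{n}_+)$ whose simple-root part is constant; more precisely the Miura-Pl\"ucker hypothesis forces this gauge-transformed connection to be $\partial_z + Z$ on each $W_i$, hence — since the $W_i$ generate the relevant weight data and the remaining freedom lies in $[\mathfrak{n}_+,\mathfrak{n}_+]$ — to be $\partial_z + Z$ itself. Thus $\nabla$ is $Z$-twisted in the sense of Definition~\ref{ZtwMiura}; now apply part (i): conjugating further by $u(z) \in N_+(z) \subset B_+(z)$ brings $\partial_z + Z$ to $\partial_z + Z^H + \tilde n_+(z)$, and since the latter is manifestly of the form \eqref{gaugeA} with the constant element replaced by $Z^H$ after one more elimination — actually, running the same height-by-height argument as in (i) but now allowing the gauge element to carry the full $z$-dependence needed — one reaches $\partial_z + Z^H$. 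Hence $\nabla$ is $Z^H$-twisted.

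The main obstacle I anticipate is part (ii): making precise the step "the Miura-Pl\"ucker condition on the $\nabla_i$ forces the full connection to be $Z$-twisted, not merely Miura-Pl\"ucker". The subtlety is that the $W_i$ only see the weight-$\omega_i$ and weight-$(\omega_i - \alpha_i)$ lines, so a priori the gauge transformation matching $\nabla_i$ to $\partial_z + Z_i$ on each $W_i$ could fail to assemble into a single global gauge transformation on $\cF_G$ taking $\nabla$ to $\partial_z + Z$ — and indeed for general $G$ one does \emph{not} expect Miura-Pl\"ucker to imply $Z$-twisted (the excerpt says so explicitly). So the honest statement being proved must be only that $\nabla$ is $Z^H$-\emph{twisted}, and the right argument is probably to work directly with the associated Cartan connection: the remark after Definition~\ref{ZtwMP} already records that $\nabla^H$ has the form \eqref{AH2}, i.e. $\nabla^H = \prod_i y_i^{\check\alpha_i}(\partial_z + Z^H)\prod_i y_i^{-\check\alpha_i}$ for rational $y_i$; combining this with Theorem~\ref{gen elt1} and part (i) — applied to eliminate the $\mathfrak{n}_+$-part of a $B_+(z)$-conjugate of $\partial_z + Z$ — should show that $\nabla$ lies in the $B_+(z)$-gauge orbit of $\partial_z + Z^H$. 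I would allocate most of the write-up to getting this assembly step right, and keep part (i) as a short inductive lemma.
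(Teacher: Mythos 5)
Your part (i) is essentially the paper's argument: the paper conjugates by $u_i(z)=\exp\bigl(-\tfrac{c_i}{\langle\alpha_i,Z^H\rangle}e_i\bigr)$ when $\langle\alpha_i,Z^H\rangle\neq 0$ and by $\exp(zc_ie_i)$ otherwise, which is exactly the solution of your height-one linear ODE $\partial_z b_i \pm \langle\alpha_i,Z^H\rangle b_i=-c_i$; since the statement only asks to land in $Z^H+[\mathfrak{n}_+,\mathfrak{n}_+](z)$, no further heights are needed. (Your extended height-by-height elimination is also fine here, but only because the right-hand sides stay polynomial; for a general rational right-hand side the equation $\partial_z b+\lambda b=f$ with $\lambda\neq 0$ need not have a rational solution, so "solvable over $\C(z)$" should really be "solvable over $\C[z]$ since everything in sight is polynomial.")

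Part (ii) has a genuine gap, and it starts with a misreading of the conclusion. "Is $Z^H$-twisted" here means "is a $Z^H$-twisted Miura-Pl\"ucker oper" in the sense of Definition \ref{ZtwMP} --- i.e.\ the condition $\nabla_i=v(z)(\partial_z+Z^H)v(z)^{-1}|_{W_i}$ holds for all $i$ --- not that $\nabla$ itself lies in the $B_+(z)$-gauge orbit of $\partial_z+Z$ or of $\partial_z+Z^H$. Your first attempt argues that the Miura-Pl\"ucker condition "forces" $\nabla$ to be $Z$-twisted as a Miura oper; this is exactly the implication the paper warns is false for $G\neq\SL(2)$, and the reason is the one you yourself identify: the $W_i$ do not see $[\mathfrak{n}_+,\mathfrak{n}_+]$, so no conclusion about the full connection can be drawn from them. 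But your proposed repair still aims at the wrong target ($\nabla$ in the $B_+(z)$-orbit of $\partial_z+Z^H$), which is the content of the much later Theorem \ref{thm:MPisM} and requires B\"acklund transformations and admissibility hypotheses. The intended proof is a one-line consequence of part (i) together with the observation you never invoke: every element of $[\mathfrak{n}_+,\mathfrak{n}_+]$ annihilates $W_i$ (it kills the highest-weight line, and applied to $f_i\cdot\nu_{\omega_i}$ it produces weights strictly above $\omega_i$, hence $0$). Therefore $\bigl(\partial_z+Z^H+\tilde n_+(z)\bigr)\big|_{W_i}=\bigl(\partial_z+Z^H\bigr)\big|_{W_i}$, and replacing the framing $v(z)$ by $v^u(z)=v(z)u(z)^{-1}\in B_+(z)$ gives
\begin{equation*}
\nabla_i = v(z)(\partial_z+Z)v(z)^{-1}\big|_{W_i}
         = v^u(z)\bigl(\partial_z+Z^H+\tilde n_+(z)\bigr)v^u(z)^{-1}\big|_{W_i}
         = v^u_i(z)(\partial_z+Z^H_i)v^u_i(z)^{-1},
\end{equation*}
which is precisely the $Z^H$-twisted Miura-Pl\"ucker condition. (Alternatively, your stronger version of (i), eliminating all of $\mathfrak{n}_+$ so that $u(z)(\partial_z+Z)u(z)^{-1}=\partial_z+Z^H$ exactly, would let you skip the triviality of the $[\mathfrak{n}_+,\mathfrak{n}_+]$-action on $W_i$; but you would then need to restrict to $W_i$ at the end rather than claim anything about the full connection.)
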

\begin{proof}

To prove the first statement, we will construct $u(z)$ as a
product of $r$ elements corresponding to the simple
roots. Assume that $\langle \alpha_i, Z^H\rangle\neq 0$, and set $u_i(z)=\exp\Big(-\frac{c_i }{\langle  \alpha_i,Z^H\rangle}e_i\Big)$.
We obtain 
\begin{equation}\label{elimi}
u_i(z)(\partial_z+Z)u_i(z)^{-1}=Z^H+\sum^r_{j=1, i\neq j}c_je_j+\dots,
\end{equation}
where the dots stand for terms in
$[\mathfrak{n}_+,\mathfrak{n}_+](z)$. Similarly, if $\langle 
\alpha_i,Z^H\rangle= 0$, set 
$u_i(z)=\exp(zc_ie_i)$, which again leads to \eqref{elimi}.
Then $u(z)=\prod^r_{i=1}u_i(z)$, where the order of the $u_i(z)$'s
does not matter, satisfies the desired conditions.

Recall that we have $v(z)\in B_+(z)$ such that $\nabla_i(z) = v(z)(\partial_z +Z)
v(z)^{-1}|_{W_i}$ for all $i$.   Set $v^u(z)=v(z)u(z)^{-1}\in B_+(z)$,
with $u(z)$ as in the first part.   It follows that
\begin{equation}
\begin{aligned}  
\nabla_i(z) &= v(z)(\partial_z +Z) v(z)^{-1}|_{W_i} \\
&=v(z)u(z)^{-1}(\partial_z +Z^H) u(z)v(z)^{-1}|_{W_i}=v^u_i(z)(\partial_z +Z^H_i) v^u_i(z)^{-1}.
\end{aligned}
\end{equation}
where $v^u_i(z) = v(z)u^{-1}(z)|_{W_i}$ and $Z^H_i = Z^H|_{W_i}$. Thus any $Z$-twisted Miura-Pl\"ucker oper is $Z^H$-twisted.

\end{proof}

For the rest of the paper, we will restrict attention to opers with a
semisimple twist.  However, we will retain the notation $Z^H$ for
clarity.

\subsection{Twisted Miura-Pl\"ucker data and $qq$-systems}

We now introduce a nonlinear system of differential equations
depending on the polynomials $\Lambda_1(z),\dots, \Lambda_r(z)$ and
the semisimple element $Z^H$.  As we will see, it may be viewed as a
functional realization of the Bethe Ansatz equations.

Recall that the Wronskian of two rational functions $q_+(z)$ and
$q_-(z)$ is given by
\begin{equation*} W(q_+,q_-)(z)=q_+(z)\partial_z
  q_-(z)-q_-(z)\partial_z q_+(z).
\end{equation*}

\begin{Def}  The \emph{$qq$-system} associated to $\mathfrak{g}$, the
  semisimple element $Z^H\in\mathfrak{h}$,
  and the collection of monic polynomials $\Lambda_1(z),\dots,\Lambda_r(z)$ is
  the system of equations
\begin{equation}    \label{qq}
{W(q^i_+,q^i_-)(z)}+\langle   \alpha_i,Z^H\rangle
{q^i_+(z)q^i_-(z)}=\Lambda_i(z)\prod_{j\neq i}\Big [ q_+^j(z)\Big
]^{-a_{ji}}
\end{equation}
\end{Def}
for $i=1,\dots,r$.

A polynomial solution $\{ q^i_+(z),q^i_-(z) \}_{i=1,\ldots,r}$ of
\eqref{qq} is called {\em nondegenerate} if each $q^i_+(z)$ is monic
and the $q^i_+(z)$'s satisfy the conditions in
Definition~\ref{nondeg Cartan}.  Note that nondegeneracy only depends
on the $q^i_+(z)$'s.

It is an immediate consequence of the definition that for
nondegenerate polynomial solutions, $q^i_+(z)$ and
$q^i_-(z)$ are relatively prime.  Indeed, if $w$ is a
common root of $q^i_+(z)$ and
$q^i_-(z)$, then it is a root of the left-hand side of the $i$th
$qq$-equation.  It follows that $w$ is also a root of some factor on
the right-hand side, which contradicts nondegeneracy.

\begin{Rem}
This system of equations (\ref{qq}) has also been considered in \cite{mukhvarmiura} in the context of differential operators corresponding to Miura opers for $Z=0$.
\end{Rem}

\begin{Rem}
If $\fg$ is not simply-laced, let $\tilde{\fg}$ be the associated
simply-laced Lie algebra, i.e., the Lie algebra whose Dynkin diagram
has the multiple bond replaced by a simple bond.  (We will
systematically use tilde superscripts to denote objects associated to
this new Lie algebra.)  We suppose further
that $\fg$ has a unique short simple root, hence is of type $B_n$ or
$G_2$.    In this case, we show that a solution to the $qq$-system for
$\fg$ gives rise to a solution to the $qq$-system for $\tilde{\fg}$.

Let $\{q^i_+(z),
q^i_-(z)\}$ be a solution to the $qq$-system for $\fg$ for fixed
$Z^H$ and $\Lambda_i$'s.  We let $k$ and $\ell$ be the indices
of the simple roots connected by the multiple bond, with $k$
corresponding to the short simple root.  Note that the Cartan matrices
of $\fg$ and $\tilde{fg}$ only differs in the $k,\ell$ entry.

Fix a semisimple element $\tilde{Z}^{\tilde{H}}\in\tilde{\fh}$ by the
equations $\langle   \tilde{\alpha}_i,
\tilde{Z}^{\tilde{H}}\rangle=(1+\delta_{ik}(-a_{k\ell}-1))\langle   \alpha_i,Z^H\rangle$.  Define
polynomials $\tq^i_{\pm}(z)$ and $\tilde{\Lambda}_i(z)$
by \begin{equation*}\tq^i_{\pm}(z)=\begin{cases}
    (q^k_{\pm}(z)) ^{-a_{k\ell}} & i=k,\\
    q^i_{\pm}(z) & \text{otherwise}, 
  \end{cases}  \qquad
\tilde{\Lambda}_i(z)=\begin{cases}
   -a_{k\ell}(q^k_{+}(z)q^k_-(z)) ^{-a_{k\ell}-1}{\Lambda}_k(z) &
   i=k,\\
   (q^k_{+}(z))  ^{-a_{k\ell}-1}{\Lambda}_\ell(z) & i=\ell,\\
     {\Lambda}_i(z) & \text{otherwise}.
  \end{cases}
\end{equation*}  (Note that $\tilde{\Lambda}_k$ is no longer monic.)

It is now easy to check that the $\tq^i_{\pm}(z)$'s satisfy the
$qq$-system for $\tilde{\fg}$ given by

\begin{equation*}   
{W(\tq^i_+,\tq^i_-)(z)}+\langle   \tilde{\alpha}_i,\tilde{Z}^{\tilde{H}}\rangle
{\tq^i_+(z)\tq^i_-(z)}=\tilde{\Lambda}_i(z)\prod_{j\neq i}\Big [ \tq_+^j(z)\Big
]^{-\tilde{a}_{ji}}.
\end{equation*}
The $k$th equation is just the original $k$th equation multiplied by
$-a_{k\ell}(q^k_{+}(z)q^k_-(z)) ^{-a_{k\ell}-1}$.  The left-hand sides
of the new and old $\ell$th equations coincide, and the additional
factor in $\tilde\Lambda_\ell(z)$ ensures that the same holds for the
right-hand sides.  Finally, suppose $i\ne k,\ell$.  Since $i$ is not
connected to $k$, $\tilde{a}_{ki}=a_{ki}=0$, $q^i_+$ and $\tq^i_+$ do
  not appear on the right-hand side of the $i$th equation, so the new
  and old equations are identical.  Note that this is where the
  construction fails if types $C_n$ and $F_4$.

  We remark that this construction always leads to degenerate
  solutions of the $qq$-system for $\tilde{\fg}$.  

\end{Rem}

\begin{Rem}
The $q$-deformed version of the system (\ref{qq}) is known as a
$QQ$-system \cite{Frenkel:ac}.  It plays a similar role in the study
of the Bethe Ansatz equations for the XXZ model. It also arises in the
ODE/IM correspondence \cite{Masoero_2016,Masoero_2016_SL}, in the
representation theory of quantum groups \cite{Frenkel:ac}, and in enumerative geometry \cite{Koroteev:2017aa,KSZ,KZtoroidal,KZ3d}.  
\end{Rem}

In order to describe the relationship between solutions of the
$qq$-system and Miura-Pl\"ucker opers, we need the notion of a
\emph{$Z^H$-twisted Miura-Pl\"ucker datum}.  Recall that if $\nabla$
is a $Z^H$-twisted Miura-Pl\"ucker oper, then by Theorem \ref{gen elt1}, it can be
  written in the form \eqref{form of A}:
\begin{equation}    \label{form of A1}
\nabla=\partial_z+\sum^r_{i=1}
g_i(z)\check{\alpha}_i+\sum^r_{i=1}{\Lambda_i(z)e_i}, \qquad
g_i(z) \in \C(z)^\times.
\end{equation}
Moreover, there exists $v(z) \in B_+(z)$ such that for all $i=1,\ldots,r$,
the Miura $\GL(2)$-opers $\nabla_i$ associated to $\nabla$ can be written in the form \eqref{gaugeA3}:
\begin{equation}    \label{gaugeA4}
\nabla_i = v_i(z)(\partial_z+Z^H_i)v_i(z)^{-1}, \qquad i=1,\ldots,r,
\end{equation}
where $v_i(z) = v(z)|_{W_i}$ and $Z^H_i = Z^H|_{W_i}$.

The element $v(z)$ is not uniquely determined by the Miura-Pl\"ucker
oper.  First, note that the subgroup $[N_+(z),N_+(z)]$ acts trivially on the
representations $W_i$.  Next, it is obvious from \eqref{gaugeA4} that the constant
maximal torus $H$ fixes $\partial_z+Z^H_i$.  It follows that any element of the coset $v(z) H[N_+(z),N_+(z)]$
also satisfies \eqref{gaugeA4}. We call such a coset a
\emph{framing} of the Miura-Pl\"ucker oper.

\begin{Def}  A \emph{$Z^H$-twisted Miura-Pl\"ucker datum} is a pair
  $(\nabla, v(z) H[N_+(z),N_+(z)])$ consisting of a $Z^H$-twisted
  Miura-Pl\"ucker oper together with a framing.  The datum is called
  nondegenerate if the underlying Miura-Pl\"ucker oper is nondegenerate.
\end{Def}

\begin{Thm}    \label{inj}
  There is a one-to-one correspondence between the set of
  nondegenerate $Z^H$-twisted Miura-Pl\"ucker data and the set
  of nondegenerate polynomial solutions of the $qq$-system \eqref{qq}.
\end{Thm}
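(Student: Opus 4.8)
The plan is to construct explicit maps in both directions and check they are mutually inverse.

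\medskip

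\textbf{From data to solutions.} Suppose $(\nabla, v(z)H[N_+(z),N_+(z)])$ is a nondegenerate $Z^H$-twisted Miura-Pl\"ucker datum. By Theorem~\ref{gen elt1}, $\nabla$ has the form \eqref{form of A1}, and by Proposition~\ref{nondeg1} each $g_i(z) = \zeta_i - y_i(z)^{-1}\partial_z y_i(z)$ for a unique monic polynomial $y_i(z)$ satisfying the conditions in Definition~\ref{nondeg Cartan}. Following the $\SL(2)$ model described in Section~3, I would pass to the associated Miura $\SL(2)$-opers $\hat\nabla_i$ of \eqref{Bi}; the chosen framing $v(z)$ restricts (modulo the subgroups acting trivially on $W_i$) to a gauge transformation $v_i(z)$ of the form $v_i(z) = y_i(z)^{\check\alpha_i} \exp\!\big(\tfrac{q^i_-(z)}{q^i_+(z)} e_i\big)$ with $q^i_\pm(z)$ relatively prime polynomials, $q^i_+(z)$ monic. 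The key computation (a direct $2\times 2$ matrix calculation, exactly as in the $\SL(2)$ discussion and in \cite{KSZ}) shows first that $y_i(z) = q^i_+(z)$, and second that conjugating $\partial_z + Z^H_i$ by $v_i(z)$ produces the off-diagonal entry $W(q^i_+,q^i_-)(z) + \langle\alpha_i,Z^H\rangle q^i_+(z)q^i_-(z)$ divided by $(q^i_+(z))^2$; comparing with $\hat\nabla_i = \partial_z + g_i\check\alpha_i + \rho_i(z)e_i$ and using $\rho_i(z) = \Lambda_i(z)\prod_{k\ne i}q^k_+(z)^{-a_{ki}}$ from \eqref{ri} yields precisely the $i$th $qq$-equation \eqref{qq}. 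That the $q^i_+(z)$'s satisfy the nondegeneracy conditions is inherited from the $y_i(z)$'s, so we land in the set of nondegenerate polynomial solutions.

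\medskip

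\textbf{From solutions to data.} Conversely, given a nondegenerate polynomial solution $\{q^i_\pm(z)\}$, I set $y_i(z) := q^i_+(z)$, define $g_i(z) := \zeta_i - y_i(z)^{-1}\partial_z y_i(z)$, and let $\nabla := \partial_z + \sum_i g_i(z)\check\alpha_i + \sum_i \Lambda_i(z)e_i$; I then define $v(z) := \prod_i q^i_+(z)^{\check\alpha_i}\,\exp\!\big(\sum_i \tfrac{q^i_-(z)}{q^i_+(z)}e_i\big) \in B_+(z)$ (the order in the product and exponential being irrelevant modulo $[N_+(z),N_+(z)]$, which is exactly why we work with framings). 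One must check that $\nabla$ is genuinely a Miura-Pl\"ucker oper, i.e. that \eqref{gaugeA3}/\eqref{gaugeA4} holds: restricting $\partial_z + Z^H$ conjugated by $v(z)$ to each $W_i$ only sees $v_i(z) = v(z)|_{W_i}$, and since $[N_+,N_+]$ and the higher $e_j$-exponentials act trivially on $W_i$, this restriction equals conjugation by $y_i(z)^{\check\alpha_i}\exp(\tfrac{q^i_-}{q^i_+}e_i)$ of $\partial_z + Z^H_i$, which by the $qq$-equation and the computation above equals $\nabla_i$. Nondegeneracy of the datum is the hypothesis on the $q^i_+(z)$'s.

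\medskip

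\textbf{Bijectivity and the main obstacle.} Finally I check the two assignments are inverse. One direction is immediate since $y_i(z) = q^i_+(z)$ was forced in both constructions; the other requires observing that a nondegenerate $Z^H$-twisted Miura-Pl\"ucker datum is rigid enough that its framing coset is pinned down by the pair $(y_i(z), q^i_-(z))$, which in turn is recovered from $(\nabla, v(z)H[N_+(z),N_+(z)])$ — here nondegeneracy (monic $y_i$, no common roots) is what makes $q^i_-(z)$ well-defined and relatively prime to $q^i_+(z)$. I expect the main obstacle to be bookkeeping around the framing: proving that two representatives $v(z), v'(z)$ of the same framing really do give the same polynomials $q^i_\pm(z)$, and conversely that the $q^i_-(z)$ is uniquely determined despite $v(z)$ being defined only modulo $H[N_+(z),N_+(z)]$. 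This comes down to the facts, already noted in the excerpt, that $[N_+(z),N_+(z)]$ acts trivially on every $W_i$ and that $H$ fixes $\partial_z + Z^H_i$; the representation-theoretic input — that the weight-$\omega_i$ and weight-$(\omega_i-\alpha_i)$ spaces of $V_i$ exhaust $W_i$ and that the $e_j$ for $j\ne i$ do not map $L_i$ into $W_i\setminus L_i$ in a way visible after projection — needs to be stated carefully, but is exactly the content of Lemma~\ref{2flagthm} and the setup in Section~\ref{rank2}.
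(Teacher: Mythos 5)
Your overall architecture is exactly the paper's: fix a canonical representative of the framing coset of the form $v(z)=\prod_i y_i(z)^{\check\alpha_i}\prod_i e^{-q^i_-(z)/q^i_+(z)\,e_i}$ with $y_i, q^i_+$ monic and $q^i_\pm$ coprime, restrict to each $W_i$, compare with \eqref{gaugeA4} to extract the $qq$-equations, and invert via \eqref{key} and \eqref{prodi}. Your remarks on well-definedness modulo $H[N_+(z),N_+(z)]$ and on why only the coset matters are also in line with the paper's setup.

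There is, however, one genuine gap: you assert that the identity $y_i(z)=q^i_+(z)$ comes out of "a direct $2\times 2$ matrix calculation," and you then use it to write the off-diagonal entry as the Wronskian expression divided by $(q^i_+)^2$. That is circular as stated. The matrix comparison by itself only yields the relation
\begin{equation*}
\Bigl[W(q^i_+,q^i_-)(z)+\bigl(\textstyle\sum_j a_{ji}\zeta_j\bigr)q^i_-(z)q^i_+(z)\Bigr]\,y_i(z)^2
=\bigl[q^i_+(z)\bigr]^2\,\Lambda_i(z)\prod_{j\ne i}y_j(z)^{-a_{ji}},
\end{equation*}
in which $y_i$ (coming from the diagonal part of $v$, i.e.\ the Cartan connection) and $q^i_+$ (coming from the unipotent part of $v$) are a priori independent. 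The equality $y_i=q^i_+$ is a consequence of this relation \emph{together with} the nondegeneracy hypotheses, and it requires an argument: since $y_i$ is squarefree and coprime to $\Lambda_i$ and to the $y_j$ with $a_{ji}\ne 0$, the displayed relation forces $y_i\mid q^i_+$; writing $q^i_+=y_i\,p$ one then shows $p$ is constant by counting multiplicities of a putative root of $p$ on the two sides of the rearranged equation, and monicity gives $p=1$. This divisibility-plus-multiplicity step is the one substantive piece of analysis in the paper's proof, and without it you cannot divide by $y_i^2$ to land on the $qq$-system \eqref{qq} (you would only have the relation above, with $y_i$ undetermined). The rest of your argument, including the inverse construction where one simply \emph{declares} $y_i:=q^i_+$ and checks \eqref{gaugeA4} from the $qq$-equations, is correct and matches the paper.
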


\begin{proof}
   Let $(\nabla,  v(z) H[N_+(z),N_+(z)])$ be a nondegenerate $Z^H$-twisted Miura-Pl\"ucker
  datum. We will fix the representative of the framing coset by setting
\begin{equation}    \label{vdots}
v(z) = \prod_{i=1}^r y_i(z)^{\check\alpha_i} \prod_{i=1}^r
e^{-\frac{q^i_{-}(z)}{q^i_{+}(z)} e_i},
\end{equation}
where $q^i_+(z), q^i_-(z)$ are relatively prime polynomials with
$q^i_+(z)$ monic for each $i=1$ and each
$y_i(z)$ is a monic polynomial.

We now show that the $q^i_+(z), q^i_-(z)$'s give a nondegenerate
solution to the $qq$-system and in fact,
\begin{equation}    \label{yiQi}
y_i(z)=q_+^i(z), \qquad i=1,\ldots,r.
\end{equation}

We first compute the matrix of $v(z)$ and $Z^H$ acting on the
two-dimensional subspace $W_i$ introduced in Section \ref{rank2}. A
short calculation shows that
\begin{equation}    \label{vzz}
v(z)|_{W^i}=
\begin{pmatrix}
  y_i(z) & 0\\
  0& y_i^{-1}(z)\prod_{j\neq i} y_j^{-a_{ji}}(z)
 \end{pmatrix}
 \begin{pmatrix}
1 & - \frac{q^i_{-}(z)}{q^i_{+}(z)}\\
 0& 1
 \end{pmatrix}
\end{equation}
and 
\begin{equation}
Z^H|_{W_i}=\begin{pmatrix}
\zeta_i & 0\\
  0& -\zeta_i-\sum_{j\neq i}{a_{ji}}\zeta_j
   \end{pmatrix}.
\end{equation}

We now apply \eqref{2flagformula} and \eqref{gaugeA4} to relate the
$y_i(z)$'s and $q^i_{\pm}(z)$'s. First, comparing the diagonal entries
on both sides of \eqref{gaugeA4} gives formula \eqref{giyi}:
\begin{equation}    \label{giz}
g_i(z)=\zeta_i-{y_i^{-1}(z)}\partial_z{y_i(z)}.
\end{equation}
Next, by comparing the upper triangular entries on both sides of
\eqref{gaugeA4}, we obtain
\begin{equation}    \label{Lambdai}
\Bigg[\partial_z\Bigg(\frac{q^i_-(z)}{q^i_+(z)}\Bigg)+\Big(\sum_{j}a_{ji}\zeta_j\Big)\frac{q^i_-(z)}{q^i_+(z)}\Bigg]\Big[{y_i(z)}\Big]^2=\Lambda_i(z)\prod_{j\neq i}y_j(z)^{-a_{ji}}.
\end{equation}
Multiplying through by $q^i_+(z)^2$ gives

\begin{equation}    \label{Lambdai2}
\Big[W(q^i_+(z), q^i_-(z))+\Big(\sum_{j}a_{ji}\zeta_j\Big)q^i_-(z)q^i_+(z)\Big]\Big[{y_i(z)}\Big]^2=\Big[{q^i_+(z)}\Big]^2\Lambda_i(z)\prod_{j\neq i}y_j(z)^{-a_{ji}}.
\end{equation}

The nondegeneracy conditions for our oper imply that
$y_i(z)|q^i_+(z)$.  Write $q^i_+(z)=y_i(z)p(z)$.  We will show that
$p(z)$ has degree $0$.  Suppose that $p(z)$ has a root $c$ with
multiplicity $m\ge 1$.  Note that $c$ is a root of $q^i_+$ of
multiplicity either $m$ or $m+1$, depending on whether $c$ is a
(necessarily simple) root of $y_i(z)$.

Now, rewrite the previous equation as
\begin{equation} \label{Lambdai3} q^i_-(z)\partial_z
  q^i_+(z)=q^i_+(z)\partial_z
  q^i_-(z)+\Big(\sum_{j}a_{ji}\zeta_j\Big)q^i_-(z)q^i_+(z)-p(z)^2\Lambda_i(z)\prod_{j\neq
    i}y_j(z)^{-a_{ji}}.
\end{equation}

Suppose that $c$ is not a root of $y_i(z)$.  Then $c$ is a root of the
left-hand side of \eqref{Lambdai3} with multiplicity $m-1$.  Since $c$ is a
zero of the 
three terms on the right-hand side have multiplicities $\ge m$, $m$, and
$2m$ respectively, we have a contradiction.  On the other hand, if $c$
is a root of $y_i(z)$, then $c$ is a root of the left-hand side with
multiplicity $m$ while it is a root of the three terms on the right-hand
side with multiplicities $\ge m+1$, $m+1$, and $2m$.  Again, we have a
contradiction, so $p(z)$ is a constant.  Since $q^i_+(z)$ and $y_i(z)$ are
monic, $p(z)=1$ and $q^i_+(z)=y_i(z)$.

Dividing out by $y_i(z)^2$ in \eqref{Lambdai2}, we see that the
polynomials $q^i_+(z), q^i_-(z)$, $i=1,\ldots,r$, satisfy the system
of equations \eqref{qq} and are nondegenerate.  Thus, we obtain a map
from the set of nondegenerate Miura $G$-opers to the set of
nondegenerate solutions of \eqref{qq}.

To show that this map is a bijection, we construct its
inverse. Suppose that we are given a nondegenerate solution $\{
q^i_+(z), q^i_-(z) \}_{i=1,\ldots,r}$ of the system \eqref{qq}. 
We then define $\nabla$ by formula
\eqref{form of A1}, where we set
$$
g_i(z)=\zeta_i -{q^i_+}(z)^{-1}\partial_z{q^i_+(z)},
$$
 i.e.
\begin{equation} \label{key}
\nabla =\partial_z + \sum^r_{i=1}\Big [\zeta_i -{q^i_+}(z)^{-1}\partial_z{q^i_+(z)}\Big]\check{\alpha}_i+\sum^r_{i=1}\Lambda_i(z)e_i.
\end{equation}
We also set
\begin{equation}    \label{prodi}
v(z) = \prod_{i=1}^r q^i_+(z)^{\check\alpha_i} \prod_{j=1}^r
e^{-\frac{q^j_{-}(z)}{q^j_{+}(z)} e_i}.
\end{equation}
Note that this means that we are setting $y_i(z)=q^i_+(z)$ for all
$i$.  Equations \eqref{gaugeA4} are now satisfied for all $i$.
Indeed, the Wronskian equations imply that the off-diagonal part of
\eqref{gaugeA4} holds while the diagonal part is automatic.  Moreover,
the nondegeneracy conditions on $\nabla$ are satisfied by Proposition
\ref{nondeg1}. Therefore, $(\nabla,  v(z) H[N_+(z),N_+(z)])$  defines a nondegenerate
$Z^H$-twisted Miura-Pl\"ucker $G$-oper. This completes the proof.
\end{proof}

\begin{Rem}  The inverse map is defined even for degenerate
  solutions of the $qq$-system.  Thus, a polynomial solution
  of the $qq$-system gives rise to a  $Z^H$-twisted Miura-Pl\"ucker datum without the assumption of nondegeneracy.
\end{Rem}

\begin{Cor}\label{MPsurj}  There is a surjective map from the set of nondegenerate
  polynomial solutions of the $qq$-system \eqref{qq} to the set of
  nondegenerate $Z^H$-twisted Miura-Pl\"ucker opers whose fibers
  consist of all solutions with fixed $q^i_+(z)$'s.
\end{Cor}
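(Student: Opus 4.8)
The plan is to deduce the corollary directly from Theorem~\ref{inj}. A nondegenerate $Z^H$-twisted Miura-Pl\"ucker datum is a pair $(\nabla, v(z)H[N_+(z),N_+(z)])$, and there is an evident forgetful map sending such a datum to its underlying oper $\nabla$. Composing the bijection of Theorem~\ref{inj} (in the direction solutions~$\to$~data) with this forgetful map produces a map $\Phi$ from nondegenerate polynomial solutions of the $qq$-system to nondegenerate $Z^H$-twisted Miura-Pl\"ucker opers; concretely, by the inverse construction in the proof of Theorem~\ref{inj}, $\Phi$ sends a solution $\{q^i_+(z),q^i_-(z)\}$ to the oper $\nabla$ of formula~\eqref{key}. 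This is exactly the map whose existence the corollary asserts.

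First I would verify that $\Phi$ is surjective. Let $\nabla$ be any nondegenerate $Z^H$-twisted Miura-Pl\"ucker oper. By definition there exists $v(z)\in B_+(z)$ satisfying \eqref{gaugeA4}, so $v(z)H[N_+(z),N_+(z)]$ is a framing and $(\nabla, v(z)H[N_+(z),N_+(z)])$ is a nondegenerate datum. Applying the bijection of Theorem~\ref{inj} to this datum yields a nondegenerate polynomial solution of the $qq$-system, and by construction this solution maps to $\nabla$ under $\Phi$. Hence $\Phi$ is onto.

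Next I would identify the fibers. In one direction, formula~\eqref{key} shows that $\Phi(\{q^i_+,q^i_-\})=\nabla$ depends only on the polynomials $q^i_+(z)$ (through $g_i(z)=\zeta_i-{q^i_+}(z)^{-1}\partial_z q^i_+(z)$) together with the fixed $\Lambda_i(z)$; thus any two nondegenerate solutions sharing the same collection $\{q^i_+(z)\}$ have the same image. In the other direction, $\nabla$ determines its associated Cartan connection $\nabla^H$, hence the functions $g_i(z)$, and therefore the monic polynomials $q^i_+(z)$ uniquely (as in the discussion following \eqref{giyi}, $\nabla^H$ pins down the $y_i=q^i_+$ up to scalar, and monicity removes that scalar). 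Consequently the fiber of $\Phi$ over $\nabla$ consists of precisely those nondegenerate solutions whose $q^i_+(z)$'s agree with this prescribed tuple. Here one also invokes the remark, made just after the definition of nondegenerate solution, that nondegeneracy of a polynomial solution depends only on the $q^i_+(z)$'s, so that fixing an admissible tuple $\{q^i_+\}$ and letting $\{q^i_-\}$ vary stays inside the nondegenerate locus.

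I do not expect a serious obstacle: the entire content is already packaged in Theorem~\ref{inj}, and the only step requiring genuine care is the fiber description, i.e.\ checking that $\nabla$ recovers the $q^i_+(z)$'s exactly and that, for a fixed oper, the choice of framing corresponds precisely to the choice of the $q^i_-(z)$'s compatible with those $q^i_+(z)$'s. This amounts to unwinding the normalization \eqref{vdots}--\eqref{prodi} from the proof of Theorem~\ref{inj} and observing that the framing coset $v(z)H[N_+(z),N_+(z)]$ is exactly the ambiguity invisible to $\nabla$.
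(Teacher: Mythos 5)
Your proposal is correct and follows essentially the same route as the paper: the paper's proof is exactly the observation that the Miura-Pl\"ucker oper in the correspondence of Theorem~\ref{inj} depends only on the $q^i_+(z)$'s, so the desired map is the inverse bijection composed with forgetting the framing. Your additional checks (surjectivity via the existence of a framing, and the recovery of the monic $q^i_+(z)$'s from the Cartan connection to pin down the fibers) are just a more explicit unwinding of that same argument.
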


\begin{proof}  
In the correspondence of the theorem, the  Miura-Pl\"ucker oper is defined
entirely in terms of the $q^i_+(z)$'s.  The desired map is the
composition of the inverse map with the map that forgets the framing.
\end{proof}

In the next section, we will describe the fibers of this map
explicitly.

\subsection{The $qq$-system and the Bethe Ansatz equations}

We now derive the equations determining the zeros of a nondegenerate
polynomial solution $\{q^i_+(z),q^i_-(z)\}_{i=1,\dots,r}$ of the
$qq$-system.  These equations are precisely the Bethe Ansatz equations
for the inhomogeneous Gaudin model that were introduced
in~\cite{Feigin:2006xs,Rybnikov:2010}.

We begin by reformulating the $qq$-system.  Multiplying both sides of
\eqref{qq} by $q^i_+(z)^{-2}e^{\langle \alpha_i,Z^H\rangle z}$ and
recalling that $a_{ii}=2$, we see that the $qq$-system is equivalent
to

\begin{equation}  \label{qqred}
\partial_z\Bigg[e^{\langle\alpha_i,Z^H\rangle z}\Bigg(\frac{q^i_-(z)}{q^i_+(z)}\Bigg)\Bigg]=\Lambda_i(z)\left(\prod_{j}q_+^j(z)^{-a_{ji}}\right)e^{\langle \alpha_i,Z^H\rangle z}, \quad i=1, \dots, r.
\end{equation}

Let $\{w^i_\ell\}$ be the roots of $q^i_+(z)$. To derive the Bethe Ansatz
equations, recall that a meromorphic function
$f(z)$ with a double pole at $w$ has residue $0$ if and only if
$\partial_z\log(f(z)(z-w)^2)|_{z=w}=0$.  By nondegeneracy, we can apply this remark to the right-hand side of
\eqref{qqred} at $w^i_\ell$, thereby obtaining the system of equations

\begin{equation}\label{bethe}\begin{gathered}
\langle\alpha_i,Z^H\rangle+\partial_z\log\Big[\Lambda_i(z)\prod_{j}q_+^j(z)^{-a_{ji}}(z-w^i_k)^2\Big]\Bigg|_{z=w^i_\ell}=0,\\ 
 i=1,\dots, r; \quad  \ell=1, \dots, \deg(q^i_+(z)).
\end{gathered}
\end{equation}

These equations can be recast in a more familiar form by
computing the logarithmic derivatives explicitly.  Recall from \eqref{lambdaroots}
that the roots of the $\Lambda_j(z)$'s are denoted by $z^i_1,\dots,z^i_{N_i}$
and the multiplicity of the root $z_k$ in the $\Lambda_j(z)$'s is
determined by the dominant integral coweight $\check{\lambda}_k$.  A simple
computation now gives the Bethe Ansatz equations
\begin{equation}\label{BAEexplicit} \begin{gathered}
\langle \alpha_i,Z^H\rangle+\sum^{N_i}_{j=1}\frac{\langle \alpha_i,\check{\lambda}_j\rangle}{w_\ell^i-z^i_ j}-\sum_{(j,s) \neq (i,\ell)} \frac{a_{ji}}{w_\ell^i-w_s^j}=0,\\ 
i=1,\dots, r, \quad  \ell=1, \dots, \deg(q^i_+(z)).
\end{gathered}
\end{equation}

\begin{Rem} These are the Bethe Ansatz equations corresponding to the
  representation $\otimes_{j=1}^N V_{\check{\lambda}_j}$ and the coweight
  $\check{\mu}=\sum \check{\lambda}_j -\sum\deg q^i_+(z)
  \check{\alpha}_{i}$.  The $\check{\lambda}_i$'s are dominant, but we are not
assuming that $\check{\mu}$ is dominant. 
\end{Rem}

Next, we show that the map from nondegenerate polynomial solutions of
the $qq$-equations to solutions of the Bethe Ansatz equations is
surjective; moreover, the fibers are affine spaces of dimension equal
to the number of simple roots which kill $Z^H$.

We start by considering some properties of the rational functions
$\phi_i(z)=q^i_-(z)/ q^i_+(z)$.  First, we get an equivalent form of
the $i$th $qq$-equations by dividing \eqref{qq} by $q^i_+(z)^2$:
\begin{equation}\label{phieq} \partial_z \phi_i(z)+\langle
  \alpha_i,Z^H\rangle\phi_i(z)=\Lambda_i(z)\left(\prod_{j}q_+^j(z)^{-a_{ji}}\right).
\end{equation}
For convenience, we set $\xi_i=\langle
\alpha_i,Z^H\rangle$.

Since $e^{\xi_i
  z}\Lambda_i(z)\left(\prod_{j}q_+^j(z)^{-a_{ji}}\right)$ has a double root at $w^i_k$ and residue $0$, we obtain the
 partial fraction decomposition
\begin{equation}\Lambda_i(z)\left(\prod_{j}q_+^j(z)^{-a_{ji}}\right)=p_i(z)+\sum b^i_k
  \left(\frac{1}{(z-w^i_k)^2}-\frac{\xi_i}{z-w^i_k}\right),
\end{equation}
where $p_i(z)$ is a polynomial.
If we write \begin{equation}\label{phipf} \phi_i(z)=h_i(z)+\sum
  \frac{c^i_k}{z-w^i_k},
\end{equation}
with
$h_i(z)$ a polynomial, then \eqref{phieq} can be expressed in terms of
partial fraction decompositions as
\begin{equation}\partial_z h_i(z)+\xi_i h_i(z) -\sum
  \frac{c^i_k}{(z-w^i_k)^2}+\sum \frac{\xi_i c^i_k}{z-w^i_k}=p_i(z)+\sum b^i_k
  \left(\frac{1}{(z-w^i_k)^2}-\frac{\xi_i}{z-w^i_k}\right).
\end{equation}
In other words,
\begin{equation}\label{inverseconditions} c^i_k=-b^i_k \qquad \text{and}\qquad  \partial_z
  h_i(z)+\xi_i h_i(z)=p_i(z).
\end{equation}

We will use these conditions to define a polynomial solution of the
$qq$-systems associated to a solution of the Bethe Ansatz equations.
Fix such a solution, i.e., a collection of $w^i_\ell$'s satisfying
\ref{BAEexplicit}.  Notice that for this solution to make sense,
$w^i_\ell$ is not a root of $\Lambda_i$ and if $a_{ji}\ne 0$ and
$(i,\ell)\ne (j,s)$, then 
$w^i_\ell\ne w^j_s$.  Set $q^i_+(z)=\prod_\ell (z-w^i_\ell)$.   We
must show that there exist polynomials $q^i_-(z)$ which extend the
$q^i_+(z)$'s to a solution of \eqref{qq}; this solution will
automatically be nondegenerate.

In order to define $q^i_-(z)$, we will construct a rational function $\phi_i(z)$
whose poles are precisely the roots of $q^i_+(z)$ and set
$q^i_-(z)=\phi_i(z)q^i_+(z)$.  We define $\phi_i(z)$ via the partial
fraction decomposition \eqref{phipf}, so that the 
$qq$-equations are satisfied if and only if \ref{inverseconditions}
holds.  Thus, after setting $c^i_k=-b^i_k$, we just need $h_i(z)$ to
be a polynomial solution of the differential equation $\partial_z
  h_i(z)+\xi_i h_i(z)=p_i(z).$  If $\xi_i=0$, then $h_i(z)$ can be any
  indefinite integral of $p_i(z)$.  If $\xi_i\ne 0$, then there is a
  unique indefinite integral of $e^{\xi_i z}p_i(z)$ such that
  $h_i(z)=e^{-\xi_i z}\int^z e^{\xi_i x}p_i(x)\,dx$ is a polynomial.

We have thus proved the following theorem.

\begin{Thm}\label{BAE}
\begin{enumerate}\item
If $\langle \alpha_i,Z^H\rangle\ne 0$ for all $i$ (for example, if
$Z^H$ is regular semisimple), then there is a bijection between the solutions of the
Bethe Ansatz equations \eqref{bethe} and the nondegenerate polynomial
solutions of the $qq$-system \eqref{qq}. \\
\item  If $\langle \alpha_{l},Z^H\rangle=0$, 
for $l=i_1,\dots, i_k$ and is nonzero otherwise, then
$\{q^i_+(z)\}_{i=1,\dots, r}$ and 
$\{q^i_-(z)\}_{i\neq {i_1,\dots, i_k}}$, are uniquely determined by
the Bethe Ansatz equations, but each $\{q^{i_j}_-(z)\}$ for $j=1,\dots
k$ is only determined up to an arbitrary transformation
$q^{i_j}_-(z)\to q^{i_j}_-(z)+c_jq^{i_j}_+(z)$, where $c_j\in
\mathbb{C}$.
\end{enumerate}
\end{Thm}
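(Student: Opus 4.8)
The statement has two halves, and the work has essentially all been done in the preceding discussion; what remains is to organize it into a clean bijection. The plan is to build two maps and check they are mutually inverse. In one direction, a nondegenerate polynomial solution $\{q^i_+(z),q^i_-(z)\}$ of the $qq$-system yields the $w^i_\ell$'s as the (distinct, by nondegeneracy) roots of $q^i_+(z)$; the argument preceding \eqref{BAEexplicit}---namely the residue-vanishing reformulation \eqref{qqred} and the explicit logarithmic-derivative computation---shows these roots satisfy \eqref{BAEexplicit}, hence \eqref{bethe}. So this direction is immediate from what is already written. The substance is the reverse direction: starting from a solution of the Bethe Ansatz equations, reconstruct the $q^i_\pm(z)$'s.

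For the reverse direction I would set $q^i_+(z)=\prod_\ell(z-w^i_\ell)$ and then invoke the construction already laid out: define $\xi_i=\langle\alpha_i,Z^H\rangle$, expand $\Lambda_i(z)\prod_j q^j_+(z)^{-a_{ji}}$ in the partial-fraction form with double poles at the $w^i_k$ and residue zero (this is exactly where the Bethe equations \eqref{BAEexplicit} are used---they are the statement that the simple-pole part of this expression is $-\xi_i$ times the double-pole coefficient), read off the coefficients $b^i_k$ and the polynomial part $p_i(z)$, and then set $\phi_i(z)=h_i(z)+\sum_k c^i_k/(z-w^i_k)$ with $c^i_k=-b^i_k$ and $h_i(z)$ a polynomial solution of $\partial_z h_i+\xi_i h_i=p_i$. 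Finally put $q^i_-(z)=\phi_i(z)q^i_+(z)$. One checks $q^i_-(z)$ is a polynomial (the residues were arranged to cancel) and that \eqref{qq} holds (equivalent to \eqref{inverseconditions}, which holds by construction); nondegeneracy of the resulting solution is automatic from nondegeneracy of the Bethe solution (the $w^i_\ell$ are distinct and avoid the roots of $\Lambda_i$ and the $w^j_s$ for $a_{ji}\ne 0$).

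The key point distinguishing part (1) from part (2) is the degree of freedom in choosing $h_i$. When $\xi_i\ne 0$ there is a \emph{unique} polynomial solution of $\partial_z h_i+\xi_i h_i=p_i$, namely $h_i(z)=e^{-\xi_i z}\int^z e^{\xi_i x}p_i(x)\,dx$ with the constant of integration forced by polynomiality; hence $q^i_-$ is uniquely determined and the two maps are mutually inverse, giving the bijection in (1). When $\xi_i=0$, however, $h_i$ is only determined up to an additive constant (any antiderivative of $p_i$ works, and $p_i$ itself has an ambiguous constant term---adding a constant to $h_i$ is absorbable), which translates precisely into the freedom $q^i_-(z)\mapsto q^i_-(z)+c_i q^i_+(z)$. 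So for part (2) I would isolate the indices $i_1,\dots,i_k$ with $\xi_{i_j}=0$, note that for all other $i$ the reconstruction of $q^i_-$ is rigid exactly as in (1), and observe that the Bethe equations involve only the $w$'s, i.e. only the $q^i_+$'s, so they impose nothing further on the ambiguous $q^{i_j}_-$. The main obstacle, such as it is, is bookkeeping: verifying carefully that the reconstructed $q^i_-(z)$ is genuinely polynomial (no spurious pole survives) and that the fiber description is exactly the affine space $\{(c_1,\dots,c_k)\in\mathbb{C}^k\}$ with no hidden interaction between different indices---but since the $i$th $qq$-equation's left side sees only $q^i_\pm$ and its right side sees the $q^j_+$'s (never the $q^j_-$'s), the coordinates decouple and this is routine.
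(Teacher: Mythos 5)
Your proposal is correct and follows essentially the same route as the paper: the forward direction via the residue/logarithmic-derivative reformulation \eqref{qqred}--\eqref{BAEexplicit}, and the reverse direction by setting $q^i_+(z)=\prod_\ell(z-w^i_\ell)$, reconstructing $\phi_i$ through the partial-fraction conditions \eqref{inverseconditions}, and solving $\partial_z h_i+\xi_i h_i=p_i$, with the dichotomy $\xi_i\ne 0$ versus $\xi_i=0$ accounting for the uniqueness in part (1) and the one-parameter ambiguity $q^{i_j}_-\mapsto q^{i_j}_-+c_jq^{i_j}_+$ in part (2). No gaps; the decoupling observation at the end is exactly the point the paper relies on implicitly.
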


\begin{Rem} The map $\{q^i_+(z), q^i_-(z)\}\mapsto \{q^i_+(z)\}$
  taking polynomial solutions of the $qq$-system to the ``positive
  part'' has fibers which are affine spaces of the dimension given in
  the theorem, even when the solutions are degenerate.  Indeed, choose $q^1_+(z),\dots, q^r_+(z)$ for which
there exists a (not necessarily nondegenerate) polynomial solution of the
$qq$-system. The possible $q^i_-(z)$'s are determined by integrating
\eqref{qqred}: 
\begin{equation} 
q^i_-(z)=q^i_+(z)e^{-\langle \alpha_i,Z^H\rangle z}\int^z e^{\langle \alpha_i,Z^H\rangle x}\Lambda_i(x)\prod_{j}q_+^j(x)^{-a_{ji}}dx,
\end{equation}
Here, we must choose the integration constant so that $q^i_-(z)$
is a polynomial.  By hypothesis, there exists at least one such constant.

If $\langle \alpha_i,Z^H\rangle\ne 0$ for all $i$ (for example, if
$Z^H$ is regular semisimple), then it is clear that only one
integration constant is possible, so the $q^i_-(z)$'s are uniquely
determined.  However, if $\langle \alpha_i,Z^H\rangle= 0$, then
$q^i_-(z)$ is only determined up to adding a constant multiple of
$q^i_+(z)$:
\begin{equation} 
q^i_-(z)=q^i_+(z)\Bigg[ c_i+\int^z
\Lambda_i(x)\prod_{j}q_+^j(x)^{-a_{ji}}dx\Bigg],
\end{equation}
where $c_i\in\C$ is arbitrary.
\end{Rem}

\begin{Rem} \label{degrees} The previous remark shows that the degrees of the $q^i_-$'s are
  essentially determined by the degrees of the $q^i_+$'s and the
  $\Lambda_i$'s.  If $\langle \alpha_{i},Z^H\rangle\ne 0$, then it is
  obvious from the $i$th $qq$-equation that $\deg
  q^i_-=\deg\Lambda_i-\deg q^i_+-\sum_{j\ne i}a_{ji}\deg q^j_+$.  On
  the other hand, if $\langle \alpha_{i},Z^H\rangle= 0$, then it
  follows from the theorem that there is a solution with $\deg
  q^i_-\ne \deg q^i_+$.  In this case, $\deg W(q^i_+, q^i_-)=\deg
  q^i_{+} +\deg q^i_{-} -1$, so $\deg q^i_-=1+\deg\Lambda_i-\deg
  q^i_+-\sum_{j\ne i}a_{ji}\deg q^j_+$.  If this degree is greater
  than $\deg q^i_+$, then every possible  $q^i_-$ has this degree.  If
  it is less than $\deg q^i_+$, then every other possible $q^i_-$ has
  degree equal to $\deg q^i_+$.
\end{Rem}


An immediate consequence of this theorem is the algebraicity of the set of $q^i_+$'s
giving rise to nondegenerate solutions of the $qq$-system.  More
precisely, fix nonnegative integers $d_1,\dots,d_r$.  Let
$\mathcal{Q}_{d_1,\dots,d_r}$ be the set of monic polynomials  $p_1,\dots p_r$ such that
  there exists a nondegenerate polynomial solution of the
  $qq$-equations (for the given $Z^H$ and $\Lambda_i$'s) satisfying
  $q^i_+=p_i$ and $\deg p_i=d_i$ for all $i$.

\begin{Cor}  The set $\mathcal{Q}_{d_1,\dots,d_r}$ is an affine variety.
\end{Cor}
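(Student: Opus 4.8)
The plan is to exhibit $\mathcal{Q}_{d_1,\dots,d_r}$ directly as a locally closed subvariety of an affine space — in fact as a principal open subset of a Zariski-closed set — so that it is automatically an affine variety. Let $N=d_1+\cdots+d_r$ and let $\mathbb{A}^N$ be the affine space whose coordinates are the non-leading coefficients of a tuple $p_\bullet=(p_1,\dots,p_r)$ of monic polynomials with $\deg p_i=d_i$. By Remark~\ref{degrees} there is an explicit bound $D_i$, depending only on the $d_j$'s, the $\deg\Lambda_j$'s, and the Cartan matrix, such that in any polynomial solution of the $qq$-system with $\deg q^i_+=d_i$ one has $\deg q^i_-\le D_i$ (and $D_i\ge d_i$).

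The key step is to turn ``$p_\bullet$ extends to a polynomial solution of the $qq$-system'' into a closed condition on $\mathbb{A}^N$. For fixed $p_\bullet$, with $q^i_+:=p_i$, the $i$th equation $W(q^i_+,q^i_-)+\langle\alpha_i,Z^H\rangle q^i_+q^i_-=\Lambda_i\prod_{j\neq i}(q^j_+)^{-a_{ji}}$ is an inhomogeneous \emph{linear} equation $A_i(p_\bullet)\,x=c_i(p_\bullet)$ in the coefficient vector $x$ of $q^i_-$, viewed as a polynomial of degree $\le D_i$; this makes sense because the right-hand side involves only the $q^j_+=p_j$'s, so the $r$ equations decouple. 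The entries of $A_i(p_\bullet)$ and $c_i(p_\bullet)$ are polynomials in the coordinates on $\mathbb{A}^N$. Now $\ker A_i(p_\bullet)$ consists of those $q^i_-$ for which $q^i_-/p_i$ solves $\psi'+\langle\alpha_i,Z^H\rangle\,\psi=0$, so it is $0$ when $\langle\alpha_i,Z^H\rangle\ne 0$ and equals $\mathbb{C}\,p_i$ when $\langle\alpha_i,Z^H\rangle=0$; in either case $r_i:=\rank A_i(p_\bullet)$ is independent of $p_\bullet$. Hence the $i$th equation is solvable with $\deg q^i_-\le D_i$ if and only if $\rank[\,A_i(p_\bullet)\mid c_i(p_\bullet)\,]=r_i$, and since this rank is always $\ge r_i$ this is equivalent to the vanishing of all $(r_i+1)\times(r_i+1)$ minors of the augmented matrix — a finite set $M_i$ of polynomial equations on $\mathbb{A}^N$.

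It remains to encode nondegeneracy. The conditions of Definition~\ref{nondeg Cartan} on the $q^i_+=p_i$ — each $p_i$ squarefree, $p_i$ coprime to $\Lambda_i$, and $p_i$ coprime to $p_j$ whenever $a_{ij}\ne 0$ — amount to the non-vanishing of a single polynomial $\Phi$ on $\mathbb{A}^N$, the product of the relevant discriminants $\operatorname{disc}(p_i)$ and resultants $\Res(p_i,\Lambda_i)$, $\Res(p_i,p_j)$ (recall that for a nondegenerate solution $q^i_+$ and $q^i_-$ are then automatically coprime, so nothing further is needed). Combining, $p_\bullet\in\mathcal{Q}_{d_1,\dots,d_r}$ if and only if $\Phi(p_\bullet)\ne 0$ and $M_i(p_\bullet)=0$ for all $i$; that is, $\mathcal{Q}_{d_1,\dots,d_r}$ is the principal open subset $\{\Phi\ne 0\}$ of the closed subvariety $\bigcap_i V(M_i)\subseteq\mathbb{A}^N$, and a principal open subset of an affine variety is affine.

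The main obstacle, and the point where the earlier results enter, is exactly the closedness claim of the second paragraph: a priori $\mathcal{Q}_{d_1,\dots,d_r}$ is only the image of the nondegenerate locus of the $qq$-solution variety under a coordinate projection, hence merely constructible. What makes it a closed-in-an-affine-open set is the combination of the uniform degree bound (Remark~\ref{degrees}) and the constancy of the ranks $r_i$, the latter being what converts solvability into the vanishing of minors. The non-uniqueness of $q^i_-$ when $\langle\alpha_i,Z^H\rangle=0$ — so that the fibers of this projection are positive-dimensional affine spaces, as recorded in Theorem~\ref{BAE} — is harmless here, since only the existence of a completing $q^i_-$ matters.
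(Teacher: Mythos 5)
Your argument is correct, but it follows a genuinely different route from the paper's. The paper deduces the corollary directly from Theorem~\ref{BAE}: a tuple of monic polynomials $p_1,\dots,p_r$ lies in $\mathcal{Q}_{d_1,\dots,d_r}$ if and only if it satisfies the nondegeneracy conditions of Definition~\ref{nondeg Cartan} (the nonvanishing of a single product of discriminants and resultants) and its roots satisfy the Bethe Ansatz equations \eqref{BAEexplicit}; after clearing denominators the latter become polynomial identities, symmetric in the roots of each $p_i$, hence closed conditions on the coefficients (equivalently, divisibility conditions of the form $p_i \mid F_i$), so $\mathcal{Q}_{d_1,\dots,d_r}$ is a principal open subset of a closed subvariety of coefficient space. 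You bypass Theorem~\ref{BAE} entirely and instead show directly that ``the $i$th $qq$-equation admits a polynomial solution $q^i_-$'' is a closed condition on $p_\bullet$: the two essential ingredients are the uniform degree bound from Remark~\ref{degrees} and the observation that $\rank A_i(p_\bullet)$ is independent of $p_\bullet$, since the kernel of $q^i_-\mapsto W(p_i,q^i_-)+\langle\alpha_i,Z^H\rangle p_i q^i_-$ is $0$ or $\C\,p_i$ according to whether $\langle\alpha_i,Z^H\rangle$ vanishes, whence solvability is exactly the vanishing of the $(r_i+1)$-minors of the augmented matrix. Without one of these two characterizations (constant rank, or the Bethe equations) the set would a priori only be constructible, as you correctly point out, so this step is the real content and you supply it. The trade-off: your proof is self-contained and yields explicit defining equations in the coefficient coordinates, while the paper's is essentially a one-line consequence of machinery it has already built, at the cost of translating conditions on roots into conditions on coefficients.
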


This theorem states that there is a surjection from nondegenerate polynomial
solutions of the $qq$-system and solutions of the Bethe Ansatz
equation whose fibers
  consist of all solutions with fixed $q^i_+(z)$'s.  Combining this
  with Corollary~\ref{MPsurj} gives the following result:
\begin{Thm}\label{MPbethe}  There is a one-to-one correspondence
  between nondegenerate $Z^H$-twisted Miura-Pl\"ucker opers and
  solutions of the Bethe Ansatz equations \eqref{bethe}.
\end{Thm}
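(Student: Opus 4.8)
The plan is to deduce this purely formally from the two surjections already in hand — Corollary~\ref{MPsurj} and Theorem~\ref{BAE} — by checking that they have \emph{the same} fibers. Write $\cS$ for the set of nondegenerate polynomial solutions $\{q^i_+(z),q^i_-(z)\}_{i=1,\dots,r}$ of the $qq$-system \eqref{qq} (with the fixed $Z^H$ and $\Lambda_i$'s), write $\cM$ for the set of nondegenerate $Z^H$-twisted Miura-Pl\"ucker $G$-opers, and $\cB$ for the set of solutions of the Bethe Ansatz equations \eqref{bethe}. Corollary~\ref{MPsurj} gives a surjection $\pi_\cM\colon\cS\to\cM$, and the construction preceding Theorem~\ref{BAE} (send a solution to the roots $w^i_\ell$ of its $q^i_+(z)$'s) gives a surjection $\pi_\cB\colon\cS\to\cB$. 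Let $\cP$ be the set of tuples $(q^1_+,\dots,q^r_+)$ of monic polynomials that occur as the ``positive part'' of some element of $\cS$, and let $\pi_\cP\colon\cS\to\cP$ be the obvious map. The whole theorem will follow once I show that $\pi_\cM$ and $\pi_\cB$ both factor through $\pi_\cP$ \emph{and} induce bijections $\cP\to\cM$ and $\cP\to\cB$.

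For $\pi_\cM$ this is exactly the content of Corollary~\ref{MPsurj}: its fibers are the level sets of $\pi_\cP$, so $\pi_\cM$ descends to a bijection $\overline{\pi}_\cM\colon\cP\xrightarrow{\sim}\cM$. (Concretely, the oper \eqref{key}, with $g_i(z)=\zeta_i-q^i_+(z)^{-1}\partial_z q^i_+(z)$, remembers the tuple $(q^i_+)$ via \eqref{giyi} together with the requirement that each $q^i_+$ be monic.) For $\pi_\cB$ the same thing is even more transparent: by definition $\pi_\cB(\{q^i_+,q^i_-\})$ is the collection of roots of the $q^i_+$'s, and by nondegeneracy these roots are simple and pairwise admissible, so each $q^i_+$ is recovered as the monic polynomial with precisely that root set. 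Hence $\pi_\cB$ factors through $\pi_\cP$ as well, and the induced map $\overline{\pi}_\cB\colon\cP\to\cB$ is injective; surjectivity of $\overline{\pi}_\cB$ is surjectivity of $\pi_\cB$ (Theorem~\ref{BAE}, which also certifies that every admissible tuple in $\cP$ does arise). So $\overline{\pi}_\cB\colon\cP\xrightarrow{\sim}\cB$ is a bijection too.

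Composing, $\Phi:=\overline{\pi}_\cB\circ\overline{\pi}_\cM^{-1}\colon\cM\xrightarrow{\sim}\cB$ is the desired one-to-one correspondence. Unwinding the definitions, $\Phi$ sends the Miura-Pl\"ucker oper \eqref{form of A1} with $g_i(z)=\zeta_i-q^i_+(z)^{-1}\partial_z q^i_+(z)$ to the tuple of roots of the $q^i_+(z)$'s, while $\Phi^{-1}$ sends a Bethe solution $\{w^i_\ell\}$ to the oper built from $q^i_+(z)=\prod_\ell(z-w^i_\ell)$ as in the proof of Theorem~\ref{BAE} and then formula \eqref{key}. Equivalently, one can phrase the argument without $\cP$: $\pi_\cM$ and $\pi_\cB$ are both surjective and induce the \emph{same} equivalence relation on $\cS$, namely ``$s\sim s'$ iff $s$ and $s'$ have equal positive parts'', so $\Phi(\nabla):=\pi_\cB(s)$ for any $s\in\pi_\cM^{-1}(\nabla)$ is well defined and bijective.

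The one place that needs genuine care is the fiber bookkeeping in the previous two paragraphs — specifically, fixing the convention that ``a solution of the Bethe Ansatz equations \eqref{bethe}'' carries exactly the combinatorial data (which simple roots are used, and the number $\deg q^i_+$ of Bethe roots attached to each) that matches an admissible tuple $(q^1_+,\dots,q^r_+)\in\cP$, so that $\pi_\cB$ is literally ``pass to the positive parts'' with no loss of information and with no extra solutions in its fibers. Once this is pinned down, no analytic input remains: the statement is a diagram chase resting entirely on Corollary~\ref{MPsurj} and Theorem~\ref{BAE}.
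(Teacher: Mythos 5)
Your proposal is correct and is essentially the paper's own argument: the paper likewise observes that the surjection of Corollary~\ref{MPsurj} and the surjection onto Bethe solutions from Theorem~\ref{BAE} both have fibers consisting of all $qq$-solutions with fixed $q^i_+(z)$'s, and concludes the bijection by matching fibers. Your version merely makes the factorization through the set of admissible positive parts explicit, which is a harmless elaboration of the same diagram chase.
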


\begin{Rem} It is easy to show that the Laurent expansion at $\infty$
  of a $Z$-twisted Miura-Pl\"ucker oper is given by formula (6.9) of
  \cite{Feigin:2006xs}, which is the principal part at $\infty$  of a certain oper with
 an irregular singularity.
\end{Rem}

\subsection{Regularity of the connection at the $\{w^i_\ell\}$'s}

The expression \eqref{key} for a nondegenerate Miura-Pl\"ucker oper
appears to have singularities at the roots of the $q^i_+$'s.  However,
the connection is in fact regular there.  To show this, it will be
convenient to describe the Bethe Ansatz equations in terms of the
Cartan connection $\nabla^H=\partial_z+A^H(z)$, with $A^H(z)$ defined
in \eqref{AH}:

  \begin{equation}\label{Bethecartan}
    \begin{gathered}
\left(\frac{2}{z-w^i_\ell}+\langle{\alpha}_i, A^H(z)\rangle+\partial_z\log\Lambda_i(z)\right)\Big|_{z=w^i_\ell}=0, \\ 
i=1,\dots, r, \quad  \ell=1, \dots, \deg(q^i_+(z)).
\end{gathered}
\end{equation}
We now apply gauge change by $g_{i,\ell}(z)=\exp\Big[\frac{-f_i}{\Lambda_i(z)(z-w^i_\ell)}\Big]$  to
$$\nabla =\partial_z +\sum^r_{i=1}\Lambda_i(z)e_i+ A^H(z).$$

The only terms in which $z-w^i_\ell$ appears in the denominator are those
which involve $\check\alpha_i$ and $f_i$.  The former gives $\frac{1}{z-w^i_\ell}\check\alpha_i+\langle{\alpha}_i,A^H(z)\rangle
  \check\alpha_i$, and since $\langle{\alpha}_i,A^H(z)\rangle$
  has a simple pole at $w^i_\ell$ with residue $-1$, this expression is
  regular at $w^i_\ell$.  The terms involving $f_i$ are 
\begin{equation*} -\partial_z
  (\frac{1}{\Lambda_i(z)(z-w^i_\ell)})f_i-\frac{f_i}{\Lambda_i(z-w^i_\ell)^2}-\frac{\langle{\alpha}_i,A^H(z)\rangle
  f_i}{\Lambda_i(z)(z-w^i_\ell)}=-\frac{\partial_z\log\Lambda_i+2(z-w^i_\ell)^{-1}+\langle{\alpha}_i,A^H(z)\rangle}{\Lambda_i(z-w^i_\ell)}f_i.
  \end{equation*}
 The residue term of this vanishes by the Bethe Ansatz equations
 \eqref{Bethecartan}, and we conclude that the matrix of $\nabla$ in
 this gauge is manifestly regular at $w^i_\ell$.
  
Thus, we have proved the following theorem:
\begin{Thm}\label{regular}
The nondegenerate Miura-Pl\"ucker oper connections are regular at the $\{w^i_\ell\}$'s.
\end{Thm}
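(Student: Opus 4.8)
The plan is to clear the apparent pole at each $w^i_\ell$ by a single explicit unipotent gauge transformation along the $f_i$-direction, arranged so that the leftover residue is exactly the Bethe Ansatz equation at that point. I would start from the form \eqref{key},
\[
\nabla=\partial_z+A^H(z)+\sum_{i=1}^r\Lambda_i(z)e_i,\qquad A^H(z)=\sum_{i=1}^r g_i(z)\check\alpha_i,\quad g_i=\zeta_i-(q^i_+)^{-1}\partial_z q^i_+,
\]
and first record what the singularity at $w:=w^i_\ell$ looks like. Because each $q^i_+$ is monic with simple roots and, by Definition~\ref{nondeg Cartan}, shares no root with $\Lambda_i$ nor with $q^j_+$ when $a_{ij}\neq 0$, the only term of $\nabla$ singular at $w$ is $g_i(z)\check\alpha_i$, with a simple pole there of residue $-\check\alpha_i$ (I postpone to the third paragraph the harmless case where $w$ is also a root of some $q^j_+$ with $j\neq i$, necessarily with $a_{ij}=0$); in particular $2(z-w)^{-1}+\langle\alpha_i,A^H(z)\rangle$ is regular at $w$, which is exactly what gives meaning to the Cartan form \eqref{Bethecartan} of the Bethe Ansatz equations. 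Since our oper is nondegenerate, its roots $\{w^i_\ell\}$ solve the Bethe Ansatz equations \eqref{bethe} (Theorem~\ref{MPbethe}), equivalently \eqref{Bethecartan}.

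The core step is the gauge change by $g_{i,\ell}(z)=\exp\!\big(-f_i/(\Lambda_i(z)(z-w))\big)$, which lies in $G(z)$ since $f_i$ is nilpotent and the exponential truncates. In conjugating $\nabla$ by $g_{i,\ell}$, the only commutators that appear involve the triple $\{e_i,f_i,\check\alpha_i\}$ together with brackets of $\mathfrak h$ with $f_i$, and the outcome is threefold: (i) conjugating $\Lambda_i(z)e_i$ produces a new $\check\alpha_i$-term whose simple pole at $w$, thanks to the specific scalar $-1/(\Lambda_i(z)(z-w))$, exactly cancels the pole of $g_i(z)\check\alpha_i$; (ii) conjugating $\partial_z$, $\Lambda_i(z)e_i$, and $A^H(z)$ each yields a term proportional to $f_i$, and these three add up to an $f_i$-coefficient equal, up to sign, to $\big(\partial_z\log\Lambda_i(z)+2(z-w)^{-1}+\langle\alpha_i,A^H(z)\rangle\big)$ divided by $\Lambda_i(z)(z-w)$, whose numerator is regular at $w$ by the previous paragraph and vanishes there precisely by \eqref{Bethecartan}; (iii) every remaining term of $\nabla$ --- the $e_j$ and $\check\alpha_j$ with $j\neq i$, no $f_i$ being present to begin with --- is either untouched or already regular at $w$ by nondegeneracy (the exceptional collision of $w$ with a root of another $q^j_+$ is treated next). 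Combining (i)--(iii), $g_{i,\ell}^{-1}\nabla\, g_{i,\ell}$ is regular at $w$, so $w^i_\ell$ is not a singular point of $\nabla$.

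Regularity is checked one point at a time, so no interaction among the different $g_{i,\ell}$ is needed; the one subtlety is that $w$ may equal $w^j_s$ for some $j\neq i$, but nondegeneracy then forces $a_{ij}=0$, hence the $\mathfrak{sl}(2)$-triples at $i$ and $j$ commute, and one simply conjugates by the (commuting) product of the corresponding $g_{j,s}$'s to clear every pole at $w$ at once. Letting $(i,\ell)$ range over all pairs then yields the theorem.

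The step I expect to be the main obstacle is (ii): pinning down the coefficient $-1/(\Lambda_i(z)(z-w))$ so that one elementary transformation both kills the $\check\alpha_i$-pole via (i) and collapses the three independent $f_i$-contributions into the single Cartan-form expression whose vanishing is the Bethe Ansatz equation --- together with the slightly fussy check behind (iii) that no root of $\Lambda_j$ or of $q^j_+$ with $a_{ij}\neq 0$ coincides with $w$, so that the ``everything else is regular'' claim is legitimate. The remainder is routine bookkeeping with the $\mathfrak{sl}(2)$-relations and logarithmic derivatives.
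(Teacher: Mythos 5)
Your proof is correct and follows essentially the same route as the paper: the same gauge transformation $\exp\!\big(-f_i/(\Lambda_i(z)(z-w^i_\ell))\big)$, the same cancellation of the $\check\alpha_i$-pole, and the same identification of the residual $f_i$-coefficient with the Cartan form \eqref{Bethecartan} of the Bethe Ansatz equations. Your extra care about a possible coincidence $w^i_\ell=w^j_s$ with $a_{ij}=0$ is a point the paper's proof passes over silently, and is handled correctly.
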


\section{B\"acklund transformations}\label{S:backlund}

In this section, we show that nondegenerate $Z^H$-twisted Miura-Pl\"ucker opers
are in fact $Z^H$-twisted Miura opers.  Thus, solutions of the Bethe Ansatz equations
are in fact parameterized by $Z^H$-twisted Miura opers.  The proof
relies on the important
technical tool of \emph{B\"acklund transformations}: transformations
on twisted Miura-Pl\"ucker opers associated to elements of the Weyl
group.

\subsection{Simple B\"acklund transformations}

Our goal is to define transformations which take a $Z^H$-twisted
Miura-Pl\"ucker oper to a $w(Z^H)$-twisted Miura-Pl\"ucker oper, where
$w$ is an element of the Weyl group.  As a first step, we consider the
case of a simple reflection $s_i$.  Recall that a polynomial solution
of the $qq$-system gives rise to a connection \eqref{key} defined in
terms of the $q^j_+$'s and $Z^H$.  We now exhibit a gauge
transformation which takes this connection to another connection in
the form \eqref{key}, but with $q^i_+$ and $Z^H$ replaced by $q^i_-$
and $s_i(Z^H)$.  This gauge transformation is by an element of
$N_-(z)$, so it does not preserve the Miura-Pl\"ucker
oper structure.

\begin{Prop}    \label{fiter}  Let $\{q^j_+,q^j_-\}_{j=1,\dots,r}$ be a polynomial
  solution of the $qq$-system \eqref{qq}, and let $\nabla$ be the
  connection of the corresponding $Z$-twisted Miura-Pl\"ucker oper
  in the form \eqref{key}.
  Let $\nabla^{(i)}$ be the connection obtained
  from $\nabla$ via the gauge transformation by $e^{\mu_i(z)f_i}$,
  where
  \begin{equation}\label{eq:PropDef}
    \mu_i(z)=\Lambda_i(z)^{-1}\Bigg[\partial_z\log\Bigg(\frac{q^i_-(z)}{q^i_+(z)}\Bigg)+\langle
    \alpha_i,Z^H\rangle\Bigg]
  \end{equation}
Then $\nabla^{(i)}$ is obtained by making the following substitutions
in \eqref{key}:

\begin{equation}\begin{aligned}
q^j_+(z) &\mapsto q^j_+(z), \qquad j \neq i, \\
q^i_+(z) &\mapsto q^i_-(z), \qquad Z\mapsto s_i(Z^H)=Z^H-\langle \alpha_i, Z^H\rangle\ \check{\alpha}_i.
\label{eq:Aconnswapped}  
\end{aligned}
\end{equation}
\end{Prop}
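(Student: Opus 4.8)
The plan is to compute the gauge-transformed connection $\nabla^{(i)}=e^{\mu_i(z)f_i}\,\nabla\,e^{-\mu_i(z)f_i}$ explicitly and match it against formula \eqref{key} subjected to the substitutions \eqref{eq:Aconnswapped}. Write $\nabla$ in the form \eqref{key}, so that $\nabla=\partial_z+\sum_j g_j(z)\check\alpha_j+\sum_j\Lambda_j(z)e_j$ with $g_j(z)=\zeta_j-q^j_+(z)^{-1}\partial_z q^j_+(z)$. The adjoint action of $e^{\mu f_i}$ on the generators occurring in $\nabla$ is read off from $[f_i,e_j]=0$ for $j\ne i$, $(\operatorname{ad} f_i)^2\check\alpha_j=0$, and $(\operatorname{ad} f_i)^3 e_i=0$: namely $e_j\mapsto e_j$ for $j\ne i$, $\check\alpha_j\mapsto\check\alpha_j+\mu a_{ji}f_i$, and $e_i\mapsto e_i-\mu\check\alpha_i-\mu^2 f_i$, while conjugating the $\partial_z$ term produces the extra summand $-(\partial_z\mu)f_i$. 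In particular the result still involves only $\check\alpha_1,\dots,\check\alpha_r$, $e_1,\dots,e_r$, and $f_i$, and collecting terms with $\mu=\mu_i$ gives
\begin{align*}
\nabla^{(i)}=\partial_z&+\sum_{j\ne i}g_j\check\alpha_j+\bigl(g_i-\Lambda_i\mu_i\bigr)\check\alpha_i+\sum_j\Lambda_j e_j\\
&+\Bigl(\mu_i\sum_j a_{ji}g_j-\Lambda_i\mu_i^2-\partial_z\mu_i\Bigr)f_i.
\end{align*}

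It then remains to match coefficients. The $e_j$-coefficients and, for $j\ne i$, the $\check\alpha_j$-coefficients are unchanged, which is consistent with \eqref{key} under \eqref{eq:Aconnswapped}, since $s_i(Z^H)=Z^H-\langle\alpha_i,Z^H\rangle\check\alpha_i$ has the same $\check\alpha_j$-component as $Z^H$ for $j\ne i$ and the substitution leaves $q^j_+$ alone for $j\ne i$. For the $\check\alpha_i$-coefficient, I insert the definition \eqref{eq:PropDef} of $\mu_i$ into $g_i-\Lambda_i\mu_i$ and use $\partial_z\log(q^i_-/q^i_+)=\partial_z q^i_-/q^i_--\partial_z q^i_+/q^i_+$; this turns $g_i-\Lambda_i\mu_i$ into $\zeta_i-\langle\alpha_i,Z^H\rangle-q^i_-(z)^{-1}\partial_z q^i_-(z)$, which is exactly the $\check\alpha_i$-coefficient of \eqref{key} after $q^i_+\mapsto q^i_-$ and $Z^H\mapsto s_i(Z^H)$, because $\langle\alpha_i,s_i(Z^H)\rangle=-\langle\alpha_i,Z^H\rangle$. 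Hence the entire proposition reduces to the single identity $\partial_z\mu_i=\mu_i\sum_j a_{ji}g_j-\Lambda_i\mu_i^2$, i.e.\ to the vanishing of the $f_i$-coefficient above.

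Establishing this identity is the crux, and it is the one place where the $qq$-system is used. Writing $\partial_z\log(q^i_-/q^i_+)=W(q^i_+,q^i_-)/(q^i_+ q^i_-)$, the $i$th equation \eqref{qq} says that $W(q^i_+,q^i_-)+\langle\alpha_i,Z^H\rangle q^i_+ q^i_-=\Lambda_i\prod_{j\ne i}(q^j_+)^{-a_{ji}}$, so the definition of $\mu_i$ collapses to the product formula $\mu_i=(q^i_+ q^i_-)^{-1}\prod_{j\ne i}q^j_+(z)^{-a_{ji}}$, which is rational with poles only among the roots of $q^i_+$ and $q^i_-$ (note $-a_{ji}\ge 0$ for $j\ne i$). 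I then take the logarithmic derivative of this product formula and substitute $\partial_z q^j_+/q^j_+=\zeta_j-g_j$ and $\partial_z q^i_-/q^i_-=(\zeta_i-g_i)+(\Lambda_i\mu_i-\langle\alpha_i,Z^H\rangle)$, using also $\langle\alpha_i,Z^H\rangle=\sum_j a_{ji}\zeta_j$ and $a_{ii}=2$; all the $\zeta_j$-terms cancel and one is left with $\partial_z\mu_i/\mu_i=\sum_j a_{ji}g_j-\Lambda_i\mu_i$, as desired. The hard part is not conceptual but bookkeeping — keeping the conventions $a_{ij}=\langle\alpha_j,\check\alpha_i\rangle$, the sign of the Wronskian, and the signs in $\operatorname{ad} f_i$ mutually consistent — so the main task is to be careful rather than clever.
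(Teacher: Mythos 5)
Your proposal is correct and follows essentially the same route as the paper: compute the adjoint action of $e^{\mu_i f_i}$ term by term, observe that everything matches except the $f_i$-coefficient, and reduce the proposition to the Riccati identity $\partial_z\mu_i/\mu_i+\Lambda_i\mu_i=\sum_j a_{ji}g_j$, which you verify from the $qq$-system. The only (cosmetic) difference is that you check the Riccati equation by taking the logarithmic derivative of the product formula $\mu_i=(q^i_+q^i_-)^{-1}\prod_{j\ne i}(q^j_+)^{-a_{ji}}$ (the paper's equation \eqref{mualt}), whereas the paper takes the logarithmic derivative of the integrated form \eqref{qqred}; these are the same computation in different packaging.
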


\begin{proof}
  A short computation shows that
\begin{multline}
\nabla^{(i)}=e^{\mu_i(z)f_i}~\nabla ~ e^{-\mu_i(z)f_i}=\\
\partial_z+A^H(z)-\Lambda_i(z)\mu_i(z)\check{\alpha}_i+\sum^r_{k=1}\Lambda_k(z)e_k
+f_i\Big(\mu_i(z)\langle \alpha_i,A^H(z)\rangle-\mu'_i(z)-\mu_i(z)^2\Lambda_i(z)\Big)
\end{multline}

In this expression, the diagonal term is
$$Z^H-\langle \alpha_i,Z^H\rangle\ \check{\alpha}_i-
\sum_j\frac{\partial_z q^j_+(z)}{q^j_+(z)}\check{\alpha}_j
-\partial_z\log\left(\frac{q^i_-(z)}{q^i_+(z)}\right) \check{\alpha}_i= s_i(Z^H)
-\frac{\partial_z q^i_-(z)}{q^i_-(z)}\check{\alpha}_i-\sum_{j\ne i}\frac{\partial_z
  q^j_+(z)}{q^j_+(z)}\check{\alpha}_j$$ as desired.

Thus the statement of the theorem is true if $\mu_i$ satisfies  the
\emph{Ricatti equation}:
\begin{equation}
\frac{\mu'_i(z)}{\mu_i(z)}+\mu_i(z)\Lambda_i(z)=\langle \alpha_i, A^H(z)\rangle.
 \end{equation}
Setting $h_i(z)=\Lambda_i(z)\mu_i(z)$, this equation is equivalent to 
 \begin{equation}
 \frac{h'_i(z)}{h_i(z)}+h_i(z)=\langle \alpha_i, A^H(z)\rangle+\partial_z\log(\Lambda_i(z)).
\end{equation}

This identity now follows by taking the logarithmic derivative of
\eqref{qqred}:

\begin{equation*}
  \begin{aligned} \frac{h'_i(z)}{h_i(z)}+h_i(z)&=\partial_z\log
    h_i(z)+\partial_z\log\Bigg(\frac{q^i_-(z)}{q^i_+(z)}\Bigg)+\langle\alpha_i,Z^H\rangle z
    =\partial_z\log\left(\partial_z\Bigg[\Bigg(\frac{q^i_-(z)}{q^i_+(z)}\Bigg)
      e^{\langle\alpha_i,Z^H\rangle
        z}\Bigg]\right)\\
    &=\partial_z\log\left(\Lambda_i(z)\left[\prod_{j}q_+^j(z)^{-a_{ji}}\right] e^{\langle\alpha_i,Z^H\rangle
        z}\right)
    =\partial_z\log(\Lambda_i(z))+\langle  \alpha_i, A^H(z)\rangle.
  \end{aligned}
\end{equation*}
\end{proof}

\begin{Rem}
Note that $\mu_i(z)$ can be rewritten using the $qq$-system equations as:
\begin{equation}\label{mualt}
\mu_i(z)=\frac{\prod_{j\neq i}q^j_+(z)^{-a_{ji}}}{q_+^i(z)q_-^i(z)}.
\end{equation}
\end{Rem}

\subsection{General B\"acklund transformations}

We would like to construct B\"acklund transformations associated to an
arbitrary element $w$ of the Weyl group by taking a reduced expression
for $w$ and composing simple B\"acklund transformations associated to
the given simple reflections.  However, in general, it is not possible
to compose B\"acklund transformations.  The problem is that, even if
one starts with a nondegenerate solution of the $qq$-system, the
connection $\nabla^{(i)}$ defined in Proposition \ref{fiter} is not
necessarily the underlying connection of a nondegenerate
$s_i(Z^H)$-twisted Miura-Pl\"ucker oper.  It thus does not give rise
to the necessary initial data for another B\"acklund transformation,
namely a solution of the $qq$-system for $s_i(Z^H)$.

\begin{Def}  Let $\{q^j_+(z), q^j_-(z)\}$ be a polynomial solution of
  the $qq$-system for $Z^H$.
  \begin{enumerate}\item The solution is called
  \emph{$i$-composable} if the 
  polynomials $q^1_+(z),\dots, q^{i-1}_+(z), q^{i}_-(z),
  q^{i+1}_+(z),\dots, q^{r}_+(z)$ are the positive polynomials of a
  solution to the $qq$-system for $s_i(Z^H)$.
\item The solution is called \emph{$i$-generic} if it is nondegenerate
  and if the collection of polynomials
  $q^1_+(z),\dots, q^{i-1}_+(z), q^{i}_-(z), q^{i+1}_+(z),\dots,
  q^{r}_+(z)$ satisfy the conditions in Definition~\ref{nondeg
    Cartan}.
\end{enumerate}
\end{Def}

We will also refer to a twisted Miura-Pl\"ucker datum as
$i$-composable or $i$-generic if it comes from such a solution of the
$qq$-system. 

It is immediate from Proposition~\ref{fiter} and Theorem~\ref{inj}
that if $\{q^j_+(z), q^j_-(z)\}$ is $i$-composable, then
$\nabla^{(i)}$ is the underlying connection of a $s_i(Z^H)$-twisted
Miura-Pl\"ucker oper.

\begin{Rem}\label{remgeneric} Assume that
  $\mathcal{Q}_{d_1,\dots,d_r}$ is nonempty.  While it is easy to see
  that $i$-genericity is a Zariski-open condition on the variety
  $\mathcal{Q}_{d_1,\dots,d_r}$, it is not clear that this open subset
  is nonempty.  In other words, $q^{i}_-(z)$ may have multiple roots
  or it may share a root with $\Lambda_i(z)$ or with $q^j_+(z)$ for
  $j\ne i$ such that $a_{ji}\ne 0$.  However, if
  $\langle \alpha_i,Z^H\rangle= 0$, the set of $i$-generic polynomial
  solutions is nonempty.  Indeed, if $q^{i}_-(z)$ does not satisfy the
  conditions in Definition~\ref{nondeg Cartan}, one can replace it by
  $q^{i}_-(z)+cq^i_+(z)$ for an appropriate nonzero scalar $c$.  In
  particular, when $Z=0$, the set of nondegenerate polynomial
  solutions of the $qq$-system that are $i$-generic for all $i$ is
  nonempty.

\end{Rem}

\begin{Lem} \label{nondegcond} If $\{q^j_+,q^j_-\}_{j=1,\dots,r}$ is
  an $i$-generic polynomial
  solution of the $qq$-system, then it is $i$-composable.  In particular,
  \begin{enumerate}\item The connection $\nabla^{(i)}$
  constructed in Proposition~\ref{fiter} is the underlying connection
  of a nondegenerate $s_i(Z^H)$-twisted Miura-Pl\"ucker oper.
  \item Any
  corresponding (necessarily nondegenerate) polynomial  solution
  $\{\wt{q}^j_+,\wt{q}^j_-\}_{j=1,\dots,r}$ of the $qq$-system for
  $s_i(Z^H)$ has
  $\wt{q}^i_+=q^i_-$ and $\wt{q}^j_+=q^j_+$ for $j\ne i$.  Moreover,
  one may take $\wt{q}^i_-=-q^i_+$.
\end{enumerate}
\end{Lem}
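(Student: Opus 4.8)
The plan is to establish $i$-composability directly, by producing the negative polynomials of a solution of the $qq$-system for $s_i(Z^H)$ whose positive part is the swapped collection $q^1_+,\dots,q^{i-1}_+,q^i_-,q^{i+1}_+,\dots,q^r_+$; statements (1) and (2) then follow formally from Proposition~\ref{fiter}, Theorem~\ref{inj}, and the discussion in Section~\ref{Sec:QQsystem}. Throughout, write $\xi_k=\langle\alpha_k,Z^H\rangle$, and note that $\langle\alpha_k,s_i(Z^H)\rangle=\xi_k-a_{ik}\xi_i$, so in particular $\langle\alpha_i,s_i(Z^H)\rangle=-\xi_i$.

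First I would dispose of the easy equations. Set $\widetilde q^i_+=q^i_-$, $\widetilde q^j_+=q^j_+$ for $j\ne i$, and $\widetilde q^i_-=-q^i_+$. Since $W(q^i_-,-q^i_+)=W(q^i_+,q^i_-)$ and $\langle\alpha_i,s_i(Z^H)\rangle=-\xi_i$, the $i$-th equation of the $qq$-system \eqref{qq} for $s_i(Z^H)$ is literally the original $i$-th equation for $Z^H$, hence holds. If $j\ne i$ and $a_{ij}=0$, then under the swap neither $\langle\alpha_j,s_i(Z^H)\rangle=\xi_j$ nor the right-hand side of the $j$-th equation changes, so $\widetilde q^j_-=q^j_-$ works.

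The substantive case is $j\ne i$ with $a_{ij}\ne 0$. By the integration argument used in the proof of Theorem~\ref{BAE} (cf. the partial-fraction condition \eqref{inverseconditions}), a polynomial $\widetilde q^j_-$ extending $\widetilde q^j_+=q^j_+$ to a solution of the $j$-th equation for $s_i(Z^H)$ exists precisely when
\[
G_j(z)\;:=\;e^{\langle\alpha_j,s_i(Z^H)\rangle z}\,\Lambda_j(z)\prod_{k}\widetilde q^k_+(z)^{-a_{kj}}
\]
has vanishing residue at each root $w$ of $q^j_+$. Bookkeeping of exponents rewrites this as
\[
G_j(z)\;=\;\Bigl[\,e^{\xi_j z}\Lambda_j(z)\prod_{k}q^k_+(z)^{-a_{kj}}\Bigr]\cdot\Bigl[\,e^{\xi_i z}\,q^i_-(z)/q^i_+(z)\,\Bigr]^{-a_{ij}},
\]
where $-a_{ij}\ge 1$. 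By \eqref{qqred} the first bracket equals $\partial_z\bigl(e^{\xi_j z}q^j_-/q^j_+\bigr)$, so it has zero residue everywhere, and by nondegeneracy of the original solution it has exactly a double pole at $w$ (every factor other than $(q^j_+)^{-2}$ is nonzero there). The second bracket, namely $e^{\xi_i z}\phi_i$ with $\phi_i=q^i_-/q^i_+$, is holomorphic at $w$ because $a_{ij}\ne 0$ forces the zeros of $q^i_+$ and $q^j_+$ to be disjoint. A short local computation (using $\Res_{z=w}\bigl(f(z)/(z-w)^2\bigr)=f'(w)$) then shows $\Res_{z=w}G_j$ is a multiple of $\partial_z(e^{\xi_i z}\phi_i)\big|_{z=w}=e^{\xi_i w}(\partial_z\phi_i+\xi_i\phi_i)(w)$. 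Finally, the $i$-th $qq$-equation in the form \eqref{phieq} gives $(\partial_z\phi_i+\xi_i\phi_i)(w)=\Lambda_i(w)\prod_k q^k_+(w)^{-a_{ki}}$, and this vanishes: the factor $(q^j_+)^{-a_{ji}}$ with $-a_{ji}\ge 1$ occurs in the product while $q^j_+(w)=0$. Hence $\Res_{z=w}G_j=0$ and the desired polynomial $\widetilde q^j_-$ exists.

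Therefore the swapped collection is the positive part of a polynomial solution of the $qq$-system for $s_i(Z^H)$, i.e., the datum is $i$-composable. This solution is nondegenerate, because by $i$-genericity its positive polynomials satisfy Definition~\ref{nondeg Cartan}; feeding it into the inverse construction of Theorem~\ref{inj} produces a nondegenerate $s_i(Z^H)$-twisted Miura-Pl\"ucker datum whose underlying connection is, upon matching \eqref{key} against the substitution rule of Proposition~\ref{fiter}, exactly $\nabla^{(i)}$. This proves~(1). For~(2), the last paragraph of Section~\ref{Hconn} shows that the positive polynomials of any nondegenerate $qq$-solution are determined up to scalars by the associated Cartan connection, which for $\nabla^{(i)}$ has $y$-functions $q^j_+$ ($j\ne i$) and $q^i_-$; together with $\widetilde q^i_-=-q^i_+$ from the first step, this gives~(2). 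The one genuinely nontrivial point is the residue computation in the third paragraph---and, within it, the observation that the original $i$-th $qq$-equation is precisely what forces $\partial_z(e^{\xi_i z}q^i_-/q^i_+)$ to vanish at the zeros of each neighboring $q^j_+$; everything else is formal bookkeeping with Proposition~\ref{fiter} and Theorem~\ref{inj}.
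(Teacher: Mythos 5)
Your proof is correct and follows essentially the same route as the paper's: both verify the new $i$-th equation directly with $\wt{q}^i_-=-q^i_+$, and for $j\ne i$ both reduce to the single key observation that the original $i$-th $qq$-equation forces $\partial_z\bigl(e^{\langle\alpha_i,Z^H\rangle z}q^i_-/q^i_+\bigr)$ to vanish at the roots of $q^j_+$, after which the integration argument of Theorem~\ref{BAE} supplies the remaining negative polynomials. The only difference is cosmetic: you phrase the criterion as residue-vanishing of $G_j$ via a local Laurent expansion, whereas the paper writes out the equivalent Bethe Ansatz equations \eqref{bethe} and manipulates logarithmic derivatives.
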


\begin{proof}

We will show that the polynomials $\{\wt{q}^j_+\}$ defined above give
rise to a solution of the Bethe Ansatz equations
for $s_i(Z^H)$.  It will then follow from Theorem~\ref{BAE} that there
exist polynomials $\wt{q}^j_-$ such that
$\{\wt{q}^j_+,\wt{q}^j_-\}_{j=1,\dots,r}$ is a nondegenerate
polynomial solution of the $qq$-system; moreover, this solution will
correspond to a $s_i(Z^H)$-twisted Miura-Pl\"ucker datum with
underlying connection $\nabla^{(i)}$.  We will show explicitly that
one can take $\wt{q}^i_-=-q^i_+$.

First, note that $W(q^i_-,-q^i_+)=W(q^i_+,q^i_-)$ and $\langle   \alpha_i,s_i(Z^H)\rangle
{q^i_-(z)(-q^i_+(z))}=\langle   \alpha_i,Z^H\rangle
{q^i_+(z)q^i_-(z)}$.  It is now immediate that the $i$th 
equation of  the $qq$-system for $s_i(Z^H)$ is satisfied by the
$\wt{q}^j_+$'s and $\wt{q}^i_-=-q^i_+$.  As in the proof of
Theorem~\ref{BAE}, this implies that the 
Bethe Ansatz equations \eqref{bethe} involving the roots of
$\wt{q}^i_+=q^i_-$ are satisfied.

Next, rewrite the $i$th equation of the original $qq$-system as

\begin{equation}    \label{qqre}
  \partial_z\log(q^i_-(z))-\partial_z\log(q^i_+(z))+\langle  Z^H,
\alpha_i\rangle =\frac{\Lambda_i(z)}{q^i_+(z)q^i_-(z)}\prod_{j\ne i}\Big [ q_+^j(z)\Big ]^{-a_{ji}}.
\end{equation}
  Evaluating this expression at a root $w_\ell^j$ of $q^j_+(z)$ for $j\ne
i$ and using nondegeneracy, one obtains
\begin{equation}
\partial_z\log(q^i_-(z))\Big|_{w^j_\ell}+\langle  \alpha_i,Z^H\rangle=\partial_z\log(q^i_+(z))\Big|_{w_\ell^j}.
\end{equation}

One gets the remaining Bethe Ansatz equations by substituting this
into \eqref{bethe}:
\begin{equation}
\begin{aligned}
  0&=\langle  \alpha_j,Z^H\rangle+\partial_z\log\Big[\Lambda_j(z)\prod_{k}q_+^k(z)^{-a_{kj}}(z-w^j_\ell)^2\Big]\Bigg|_{z=w^j_\ell}\\
  &=\langle  \alpha_j,Z^H\rangle-a_{ij}\langle  \alpha_i,Z^H\rangle+\partial_z\log\Big[\Lambda_j(z) q_-^i(z)^{-a_{ij}}\prod_{k\ne
    i}q_+^k(z)^{-a_{kj}}(z-w^j_\ell)^2\Big]\Bigg|_{z=w^j_\ell}\\
  &=\langle  \alpha_j,s_i(Z^H)\rangle+\partial_z\log\Big[\Lambda_j(z)\prod_{k}\wt{q}_+^k(z)^{-a_{kj}}(z-w^j_\ell)^2\Big]\Bigg|_{z=w^j_\ell}.
\end{aligned}
\end{equation}

\end{proof}

Thus, the $i$th simple B\"acklund transformation may be viewed as
taking an $i$-generic Miura-Pl\"ucker datum to a nondegenerate
$s_i(Z^H)$-twisted Miura-Pl\"ucker oper.

\begin{Def}\label{genericdef}
  Let $w=s_{i_1} \dots s_{i_k}$ be a reduced decomposition of an
  element $w$ of the Weyl group. \begin{enumerate}\item  A polynomial solution of the $qq$-system
  \eqref{qq} for $Z^H$ is called \emph{$({i_1}, \dots,
    {i_k})$-composable} 
  if for each $\ell$, $1\le\ell\le k$, the connection
   $\nabla^{(i_k)\dots(i_{k-\ell+1})}$ comes from a polynomial solution
     of the $qq$-system for
     $s_{i_{k-\ell+1}}\dots s_{i_k}(Z^H)$.
   \item The solution is called
     \emph{$({i_1}, \dots, {i_k})$-generic}) if it is nondegenerate
     and for each $\ell$, $1\le\ell\le k$, the connection
     $\nabla^{(i_k)\dots(i_{k-\ell+1})}$ comes from a nondegenerate
     polynomial solution of the $qq$-system for
     $s_{i_{k-\ell+1}}\dots s_{i_k}(Z^H)$.
     \item A $Z^H$-twisted  Miura-Pl\"ucker oper is called \emph{$({i_1}, \dots,
    {i_k})$-composable} (resp. \emph{$({i_1}, \dots, {i_k})$-generic})
  if it arises from such a solution of the $qq$-system.
 \end{enumerate}
\end{Def}

It is immediate that $({i_1}, \dots, {i_k})$-genericity implies
$({i_1}, \dots, {i_k})$-composability.

\begin{Rem}
  Note that in this definition, we only assume the existence of a
  sequence of transformations as described in Lemma \ref{nondegcond}
  for a particular reduced decomposition of $w$. We do not assume that
  such a sequence exists for other reduced decompositions of $w$.
\end{Rem}

We will need a technical result for $({i_1} \dots {i_k})$-composable
solutions of the $qq$-system, showing the existence of an element of
$B_-(z)$ which intertwines the action of $\nabla$ and $s_{i_1} \dots
s_{i_k}(Z^H)$ on highest weight vectors.

\begin{Prop} \label{bminus} Let $w=s_{i_1} \dots s_{i_k}$ be a reduced
  decomposition.  Then, for each $({i_1} \dots {i_k})$-composable
  solution of the $qq$-system \eqref{qq}, there exists an element
  $b_-(z)\in B_-(z)$ of the form
$$
b_-(z)=e^{c_{i_k}(z)f_{i_k}}\dots e^{c_{i_2}(z)f_{i_2}}
  e^{c_{i_1}(z)f_{i_1}}h(z),
$$
where $c_{i_j}(z)$ are non-zero rational functions and $h(z)\in H(z)$,
such that
\begin{equation}
b_-(z)w(Z^H)v=\partial_zb_-(z)v+A(z)b_-(z)v.
\label{eq:OperatorDiffbm}
\end{equation}
Here, $A(z)$ is given by equation \eqref{key}
and $v$ is a highest weight vector in any irreducible
finite-dimensional representation of $G$.
\end{Prop}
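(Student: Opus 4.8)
The plan is to produce $b_-(z)$ as the inverse of the composite of the simple B\"acklund transformations attached to $s_{i_1},\dots,s_{i_k}$, corrected by a torus-valued factor, and then to verify \eqref{eq:OperatorDiffbm} on a highest weight vector, where the identity collapses to a scalar computation. For the first part I would unwind the composability hypothesis: by Definition~\ref{genericdef} together with Proposition~\ref{fiter}, $({i_1},\dots,{i_k})$-composability furnishes a chain of gauge transformations starting from $\nabla=\partial_z+A(z)$ --- conjugating by $e^{\mu_{i_k}(z)f_{i_k}}$ produces $\nabla^{(i_k)}$; conjugating the result by $e^{\mu_{i_{k-1}}(z)f_{i_{k-1}}}$, with $\mu_{i_{k-1}}(z)$ the rational function \eqref{eq:PropDef} attached to the $qq$-solution for $s_{i_k}(Z^H)$, produces $\nabla^{(i_k)(i_{k-1})}$; and so on, ending with $\nabla^{(i_k)\cdots(i_1)}=\partial_z+A^w(z)$, where $w=s_{i_1}\cdots s_{i_k}$. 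By composability this last connection comes from a polynomial solution $\{\tilde{q}^j_+,\tilde{q}^j_-\}$ of the $qq$-system for $w(Z^H)$, so by Theorem~\ref{inj} and formula \eqref{key} its $\check\alpha_k$-component equals $\langle\omega_k,w(Z^H)\rangle-\partial_z\log\tilde{q}^k_+(z)$ with $\tilde{q}^k_+(z)$ a nonzero polynomial. I would then set
\[
G(z)=e^{\mu_{i_1}(z)f_{i_1}}\cdots e^{\mu_{i_{k-1}}(z)f_{i_{k-1}}}e^{\mu_{i_k}(z)f_{i_k}}\in N_-(z),\qquad h(z)=\prod_k\tilde{q}^k_+(z)^{\check\alpha_k}\in H(z),
\]
so that $\partial_z+A^w(z)=G(z)(\partial_z+A(z))G(z)^{-1}$ and $G(z)^{-1}=e^{-\mu_{i_k}(z)f_{i_k}}e^{-\mu_{i_{k-1}}(z)f_{i_{k-1}}}\cdots e^{-\mu_{i_1}(z)f_{i_1}}$, already of the shape required of $b_-(z)$ up to the factor $h(z)$, with $c_{i_j}(z)=-\mu_{i_j}(z)$ nonzero by \eqref{mualt}.

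Next I would record how $A^w(z)$ acts on highest weight vectors. Let $V$ be any irreducible finite-dimensional representation of $G$ with highest weight $\lambda$, write $m_k=\langle\lambda,\check\alpha_k\rangle$, and let $v\in V$ be a highest weight vector. Since $e_k v=0$ and $\check\alpha_k v=m_k v$, one has $A^w(z) v=\bigl(\sum_k m_k\langle\omega_k,w(Z^H)\rangle-\sum_k m_k\,\partial_z\log\tilde{q}^k_+(z)\bigr)v$, while $h(z)v=\bigl(\prod_k\tilde{q}^k_+(z)^{m_k}\bigr)v$ and $w(Z^H)v=\langle\lambda,w(Z^H)\rangle v=\bigl(\sum_k m_k\langle\omega_k,w(Z^H)\rangle\bigr)v$. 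Since $\partial_z\log\prod_k\tilde{q}^k_+(z)^{m_k}=\sum_k m_k\,\partial_z\log\tilde{q}^k_+(z)$, a direct computation then gives
\[
(\partial_z+A^w(z))\bigl(h(z)v\bigr)=h(z)\,w(Z^H)\,v,
\]
i.e.\ $\partial_z+A^w(z)$ acts on the highest weight line exactly as the constant connection $\partial_z+w(Z^H)$ --- which is all the Miura-Pl\"ucker condition yields, and all that is needed.

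Finally I would set $b_-(z)=G(z)^{-1}h(z)$. As $G(z)^{-1}\in N_-(z)$ and $h(z)\in H(z)$, this lies in $B_-(z)$ and has the asserted form $e^{c_{i_k}(z)f_{i_k}}\cdots e^{c_{i_1}(z)f_{i_1}}h(z)$ with nonzero rational $c_{i_j}(z)$ and $h(z)\in H(z)$. Using that $v$ is constant, that $G(z)$ intertwines $\partial_z+A(z)$ with $\partial_z+A^w(z)$, and the previous display, one computes
\[
\partial_z b_-(z)\,v+A(z)b_-(z)\,v=(\partial_z+A(z))\bigl(G(z)^{-1}h(z)v\bigr)=G(z)^{-1}(\partial_z+A^w(z))\bigl(h(z)v\bigr)=G(z)^{-1}h(z)\,w(Z^H)\,v=b_-(z)\,w(Z^H)\,v,
\]
which is \eqref{eq:OperatorDiffbm}.

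The hard part is that a Miura-Pl\"ucker oper is only constrained to be $w(Z^H)$-twisted on the rank-two subbundles $\cW_i$, so $A^w(z)$ cannot in general be gauged to the constant matrix $w(Z^H)$ on an arbitrary irreducible representation; passing to highest weight vectors dissolves this --- there $A^w(z)$ acts by a scalar --- and, just as essentially, lets one replace the $B_+(z)$-framing of the transformed oper by its diagonal part $h(z)$, which is what keeps $b_-(z)$ inside $B_-(z)$ rather than merely in the big cell $N_-(z)B_+(z)$. The remaining care is bookkeeping: verifying that composability makes every $\mu_{i_j}(z)$ in the chain well-defined and nonzero, and that inverting the accumulated $N_-(z)$-factor reverses the order of the exponentials into exactly the product written in the statement.
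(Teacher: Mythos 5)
Your proposal is correct and follows essentially the same route as the paper: both take $b_-(z)=G(z)^{-1}h(z)$ with $G(z)$ the accumulated B\"acklund gauge transformation and $h(z)=\prod_j \tilde{q}^j_+(z)^{\check\alpha_j}$ built from the plus polynomials of the transformed $qq$-system, then verify \eqref{eq:OperatorDiffbm} by reducing to a scalar computation on the highest weight line via the intertwining relation $(\partial_z+A(z))G(z)^{-1}=G(z)^{-1}(\partial_z+A^w(z))$. Your explicit remarks on the nonvanishing of the $\mu_{i_j}$'s via \eqref{mualt} and on why only the highest-weight-line action is available are accurate refinements of the same argument.
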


\begin{proof}

Let $\nabla^w$ be the $w(Z^H)$-twisted Miura-Pl\"ucker
oper obtained by iterating the B\"acklund transformations defined in Proposition~\ref{fiter}:
\begin{equation} \label{gaugew} \nabla^w=
  e^{\mu_{i_1}(z)f_{i_1}}\dots e^{\mu_{i_k}(z)f_{i_k}}\nabla e^{-\mu_{i_k}(z)f_{i_k}}\dots e^{-\mu_{i_1}(z)f_{i_1}}.
\end{equation}
Let $\{\bar{q}^i_+\}_{i=1,\dots,r}$ be the ``plus'' part of the
corresponding solution to the $qq$-system.  We claim that
\begin{equation}
b_-(z)=e^{-\mu_{i_k}(z)f_{i_k}}\dots
e^{-\mu_{i_1}(z)f_{i_1}}\prod_j
\Big[\overline{q}^j_+(z)\Big]^{\check{\alpha}_j}
\end{equation}
satisfies \eqref{eq:OperatorDiffbm}.

Let $V$ be an irreducible representation with highest weight
$\lambda$, and let $v\in V$ be a highest weight vector. First observe
that
\begin{equation}
\nabla^{w}v=w(Z^H)v-\left(\sum^r_{j=1}
 \frac{\partial_z{\bar{q}^j_+(z)}}
  {\bar{q}^j_+(z)}\check{\alpha}_j\right)v.
\end{equation}

For brevity, write $E(z)=e^{-\mu_{i_k}(z)f_{i_k}}\dots
e^{-\mu_{i_1}(z)f_{i_1}}$.  We now compute:
\begin{align*}
  (\partial_z + A(z) )b_-(z)v&=\prod_j
\Big[\bar{q}^j_+(z)\Big]^{\langle\check{\alpha}_j,\lambda\rangle}(\partial_z
  + A(z))E(z)v+b_-(z)\left(\sum_j
 \frac{\partial_z{\bar{q}^j_+(z)}}
  {\bar{q}^j_+(z)}\check{\alpha}_j\right) v\\
  &=\prod_j
\Big[\bar{q}^j_+(z)\Big]^{\langle\check{\alpha}_j,\lambda\rangle}E(z)\nabla^w
    v+b_-(z)\left(\sum_j
 \frac{\partial_z{\bar{q}^j_+(z)}}
    {\bar{q}^j_+(z)}\check{\alpha}_j\right)v\\
    &=b_-(z)\left[w(Z^H)v-\left(\sum^r_{j=1}
 \frac{\partial_z{\bar{q}^j_+(z)}}
  {\bar{q}^j_+(z)}\check{\alpha}_j\right) v\right]+b_-(z)\left(\sum_j
 \frac{\partial_z{\bar{q}^j_+(z)}}
      {\bar{q}^j_+(z)}\check{\alpha}_j\right) v\\
  &=b_-(z)w(Z^H)v,
\end{align*}
as desired.

\end{proof}

\subsection{$Z^H$-twisted Miura-Pl\"ucker opers with admissible
  combinatorics are  $Z^H$-twisted Miura opers}    \label{main thm}

We now prove one of the main results of the paper, namely, that
$Z^H$-twisted Miura-Pl\"ucker opers satisfying certain combinatorial
conditions are in fact nondegenerate $Z^H$-twisted Miura opers.  We
begin by outlining the argument.

The first step is to define a class of $Z^H$-twisted Miura-Pl\"ucker
opers for which one can give an explicit construction of an upper
triangular matrix which diagonalizes the oper, thereby showing that it
is a $Z^H$-twisted Miura oper.  The desired condition will be called
$w_0$-{\em genericity} (or more generally, $w_0$-{\em composability});
it will be a special case of the genericity considered in
Definition~\ref{genericdef}.


Next, we observe that the behavior of the $qq$-system and its iterates under
B\"acklund transformations depend on certain underlying combinatorics:
the set of roots killing $Z^H$, the degrees of
the $\Lambda_i$'s, and the degrees of the $q^i_+$'s.   This
combinatorial data essentially determine the degrees of the $q_-^i$'s
and inductively, the degrees of the polynomials appearing as solutions
of the new $qq$-systems obtained after applying B\"acklund
transformations.  We will call this combinatorial data
\emph{admissible} if there exists a $w_0$-generic solution of the
$qq$-system with the given combinatorics.

Finally, we show that twisted Miura-Pl\"ucker opers with admissible
combinatorics are in fact Miura opers.  To do this, we introduce
formal variables associated to the given admissible combinatorics: for
the coordinates of a certain affine variety determined by the set of roots, for the zeros of the
$q_+^i$'s and other $\tilde{q}_{\pm}^i$'s that appear upon an
appropriate iteration of B\"acklund transformations, and for the zeros
of the $\Lambda_i$'s.  We construct a ring $R$ by adjoining these
formal variables to $\C(z)$ and taking a suitable localization.  One
can now define a $qq$-system $\{Q^i_+,Q^i_-\}$ over $R$ which has the
property that upon specializing the formal variables appropriately,
one obtains an ordinary $qq$-system with the given combinatorics.
Moreover, $\{Q^i_+,Q^i_-\}$ is $w_0$-generic because it specializes to
an ordinary $w_0$-generic $qq$-system.  We can use this fact to deduce
that Miura-Pl\"ucker opers with the given combinatorics are in fact
Miura opers.

\subsubsection{$w_0$-composability and $w_0$-genericity}

We begin by describing a sufficient condition for a $Z^H$-twisted
Miura-Pl\"ucker oper to be a $Z^H$-twisted Miura oper.  Let $w_0$ be
the longest element of the Weyl group.  We call a solution of the
$qq$-system (or the corresponding Miura-Pl\"ucker oper) $w_0$-generic
(resp. \emph{$w_0$-decomposable}) if there exists a reduced
decomposition $w_0=s_{i_1}\dots s_{i_{\ell}}$ such that the solution
(or oper) is $(i_1,\dots,i_{\ell})$-generic (resp. composable).  (For
any $w\in W$, one defines $w$-genericity and $w$-composability
similarly.)


We will need the following well-known fact about the product of
Bruhat cells (see e.g. \cite[Lemma 29.3.A]{humphreys}):

\begin{Lem} \label{red}
i) If $u, v\in W$ satisfy $\ell(u)+\ell(v)=\ell(uv)$, then
$B_-uB_-vB_-=B_-uvB_-$.\\
ii) If $w\in W$ has a reduced decomposition $w=s_{i_1}s_{i_2}\dots
s_{i_k}$, then $$e^{a_{i_1}e_{i_1}}
e^{a_{i_2}e_{i_2}}\dots e^{a_{i_k}e_{i_k}}\in B_-wN_-, \quad e^{a_{i_1}f_{i_1}}
e^{a_{i_2}f_{i_2}}\dots e^{a_{i_k}f_{i_k}}\in B_+wN_+$$ if
$a_{i_j}\neq 0$ for all $j$.
\end{Lem}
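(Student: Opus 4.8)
The plan is to deduce everything from the standard multiplicativity of Bruhat cells. The base case I would take as given is the one underlying \cite[Lemma 29.3.A]{humphreys}: for a simple reflection $s_i$ and any $w\in W$, $B_-s_iB_-\cdot B_-wB_-=B_-s_iwB_-$ when $\ell(s_iw)=\ell(w)+1$, and $B_-wB_-\cup B_-s_iwB_-$ otherwise; the mirror statement with the two factors interchanged, and the version with $B_+$ replacing $B_-$, hold as well.

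To prove i), fix $u,v$ with $\ell(uv)=\ell(u)+\ell(v)$ and pick a reduced word $u=s_{i_1}\cdots s_{i_m}$. Iterating the base case --- and using that every contiguous prefix $s_{i_1}\cdots s_{i_j}$ is itself reduced, hence of length $j$ --- gives $B_-uB_-=B_-s_{i_1}B_-\cdots B_-s_{i_m}B_-$. Since $\ell(uv)=\ell(u)+\ell(v)$, concatenating reduced words for $u$ and $v$ produces a reduced word for $uv$, so every suffix $s_{i_j}\cdots s_{i_m}v$ has length $(m-j+1)+\ell(v)$; in particular $\ell\!\big(s_{i_j}\,(s_{i_{j+1}}\cdots s_{i_m}v)\big)$ exceeds $\ell(s_{i_{j+1}}\cdots s_{i_m}v)$ by one. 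Hence in the product $B_-s_{i_1}B_-\cdots B_-s_{i_m}B_-\cdot B_-vB_-$ one may absorb the simple cells into $v$ one at a time from the right, always landing in the single-cell case of the base case (never the union), and so obtain $B_-uvB_-$. Running the same argument with $B_+$ yields the $B_+$-version of i), which is what the second half of ii) requires.

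For ii), the key rank-one input is: if $a\neq 0$ then $e^{ae_i}\in B_-s_iN_-$ and $e^{af_i}\in B_+s_iN_+$. Inside $G_i\cong\SL(2)$ one has $e^{ae_i}=\left(\begin{smallmatrix}1&a\\0&1\end{smallmatrix}\right)$, which has nonzero upper-right entry and so lies outside the big cell (relative to the lower-triangular Borel), i.e. in $(B_-\cap G_i)\,s_i\,(N_-\cap G_i)\subseteq B_-s_iN_-$; the $f_i$ statement is the mirror image. Now induct on $k$: by the inductive hypothesis $e^{a_{i_1}e_{i_1}}\cdots e^{a_{i_{k-1}}e_{i_{k-1}}}\in B_-w'N_-$ with $w'=s_{i_1}\cdots s_{i_{k-1}}$ (reduced, being a prefix of the given word), so the full product lies in $B_-w'N_-\cdot B_-s_{i_k}N_-\subseteq B_-w'B_-\cdot B_-s_{i_k}B_-$. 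As $w=w's_{i_k}$ is reduced, $\ell(w's_{i_k})=\ell(w')+1$, and the right-handed base case collapses this to $B_-wB_-$. Finally $B_-wB_-=B_-wN_-$, since $B_-=HN_-$ and any lift of $w$ normalizes $H$, so $B_-wB_-=B_-wHN_-=B_-HwN_-=B_-wN_-$. This gives the first assertion of ii); the second is identical after replacing $(B_-,N_-,e_i)$ by $(B_+,N_+,f_i)$ and invoking the $B_+$-version of i).

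No step here is a genuine obstacle; the whole lemma is standard $BN$-pair combinatorics. The only points demanding attention are bookkeeping ones: verifying the length hypothesis of the base case at each stage (which is why one works with reduced words and their contiguous prefixes and suffixes), ensuring one always lands in the single-cell rather than the two-cell outcome, and the routine identity $B_\pm wB_\pm=B_\pm wN_\pm$ used to rewrite the conclusion in the stated normal form.
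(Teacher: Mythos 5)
Your proof is correct. The paper does not actually prove this lemma---it simply cites \cite[Lemma 29.3.A]{humphreys} as a well-known fact---and your argument is precisely the standard $BN$-pair/Tits-system proof that citation stands in for: iterated use of $B s B\cdot B w B=B s w B$ under the length-additivity hypothesis, the rank-one observation that $e^{ae_i}$ (resp.\ $e^{af_i}$) with $a\neq 0$ lies outside the big cell of $G_i$, and the normalization $B_\pm wB_\pm=B_\pm wN_\pm$.
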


\begin{Thm}    \label{w0}
Every $w_0$-composable (resp. $w_0$-generic) $Z^H$-twisted Miura-Pl\"ucker $G$-oper is a
$Z^H$-twisted Miura $G$-oper (resp. a nondegenerate $Z^H$-twisted Miura $G$-oper).  
\end{Thm}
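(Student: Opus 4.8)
The plan is to reduce the desired twistedness condition to an identity on highest weight vectors that is already supplied by Proposition~\ref{bminus}, applied to $w=w_0$. Concretely, a $w_0$-composable $Z^H$-twisted Miura-Pl\"ucker $G$-oper comes from a polynomial solution of the $qq$-system that is $(i_1,\dots,i_\ell)$-composable for some reduced decomposition $w_0=s_{i_1}\cdots s_{i_\ell}$, and its underlying connection has the form $\nabla=\partial_z+A(z)$ of \eqref{key} with $A(z)\in\mathfrak{b}_+(z)$. Since a Miura-Pl\"ucker oper is in particular a Miura $G$-oper, it remains only to produce $\beta(z)\in B_+(z)$ with $\beta(z)^{-1}\nabla\beta(z)=\partial_z+Z^H$; Definition~\ref{ZtwMiura} then identifies $\nabla$ as a $Z^H$-twisted Miura $G$-oper, and in the $w_0$-generic case it is moreover nondegenerate in the sense of Definition~\ref{nondeg Miura}, since nondegeneracy is already part of that hypothesis.

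I would first invoke Proposition~\ref{bminus} with $w=w_0$ to obtain $b_-(z)=e^{c_{i_\ell}(z)f_{i_\ell}}\cdots e^{c_{i_1}(z)f_{i_1}}h(z)\in B_-(z)$, with each $c_{i_j}(z)$ nonzero and $h(z)\in H(z)$, satisfying $(\partial_z+A(z))(b_-(z)v)=b_-(z)\,w_0(Z^H)\,v$ for every highest weight vector $v$ in every irreducible finite-dimensional $G$-module. The next step is to pin down the Bruhat cell of $b_-(z)$ with respect to $B_+$: because $w_0^{-1}=w_0$, reversing the chosen reduced word gives another reduced decomposition $w_0=s_{i_\ell}\cdots s_{i_1}$, so Lemma~\ref{red}(ii) gives $e^{c_{i_\ell}f_{i_\ell}}\cdots e^{c_{i_1}f_{i_1}}\in B_+(z)\,w_0\,N_+(z)$; right multiplication by $h(z)\in H(z)$ does not change this cell, since $H$ normalizes $N_+$ and $w_0$ normalizes $H$. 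Hence $b_-(z)=\beta(z)\,\dot w_0\,\nu(z)$ with $\beta(z)\in B_+(z)$ and $\nu(z)\in N_+(z)$, for a fixed lift $\dot w_0$ of $w_0$.

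The heart of the argument is then the following bookkeeping. Evaluating the identity of Proposition~\ref{bminus} on a highest weight vector $v_\lambda\in V_\lambda$ and using that $\nu(z)\in N_+(z)$ fixes $v_\lambda$, one gets $b_-(z)v_\lambda=\beta(z)\,\dot w_0 v_\lambda$, where $\dot w_0 v_\lambda$ is a \emph{lowest} weight vector of weight $w_0\lambda$. Since $\langle\lambda,w_0(Z^H)\rangle=\langle w_0\lambda,Z^H\rangle$ is exactly the eigenvalue of $Z^H$ on $\dot w_0 v_\lambda$, the identity becomes $(\partial_z+A(z))(\beta(z)\,\dot w_0 v_\lambda)=\beta(z)\,Z^H\,\dot w_0 v_\lambda$; writing $\beta(z)^{-1}\nabla\beta(z)=\partial_z+D(z)$ this reads $D(z)\,\dot w_0 v_\lambda=Z^H\,\dot w_0 v_\lambda$ in $V_\lambda(z)$ for every dominant $\lambda$. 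Now $D(z)\in\mathfrak{b}_+(z)$ because $A(z)\in\mathfrak{b}_+(z)$ and $\beta(z)\in B_+(z)$, so I would write $D(z)-Z^H=\delta_0(z)+\delta_+(z)$ along $\mathfrak{h}(z)\oplus\mathfrak{n}_+(z)$ and compare weight components on $\dot w_0 v_\lambda$: this forces $\langle w_0\lambda,\delta_0(z)\rangle=0$ for all dominant $\lambda$, hence $\delta_0(z)=0$ since the weights $w_0\lambda$ span $\mathfrak{h}^*$, and $\delta_+(z)\,\dot w_0 v_\lambda=0$ for all dominant $\lambda$, hence $\delta_+(z)=0$ since for sufficiently regular $\lambda$ the map $n\mapsto n\cdot\dot w_0 v_\lambda$ is injective on $\mathfrak{n}_+$ (near its lowest weight $V_\lambda$ looks like a lowest weight Verma module). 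Thus $D(z)=Z^H$, as needed.

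I expect the main obstacle to be this last extraction step: deducing equality of the \emph{entire} gauge-transformed connection with $\partial_z+Z^H$ from a relation known only on highest weight vectors. This is exactly why $w=w_0$ is forced: conjugation by $\dot w_0$ turns highest weight vectors into lowest weight vectors and the shifted twist $w_0(Z^H)$ back into $Z^H$, and only a lowest weight vector in a sufficiently regular $V_\lambda$ is acted on nontrivially by \emph{every} positive root vector, which is what lets one conclude $\delta_+(z)=0$; for a general Weyl group element $w$ the vectors $\dot w v_\lambda$ are annihilated by the root vectors $e_\alpha$ with $w^{-1}\alpha>0$, and the argument breaks. A minor point to verify along the way is that the $c_{i_j}(z)$ produced by Proposition~\ref{bminus} are genuinely nonzero, so that Lemma~\ref{red}(ii) applies; this reduces to the B\"acklund functions $\mu_{i_j}(z)$ being nonzero, which holds for any polynomial solution of the $qq$-system because no $q^i_\pm$ can vanish identically.
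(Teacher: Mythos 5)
Your proposal is correct and follows essentially the same route as the paper's proof: invoke Proposition~\ref{bminus} for a reduced word of $w_0$, use Lemma~\ref{red}(ii) (plus $w_0^{-1}=w_0$) to write $b_-(z)\in B_+(z)w_0N_+(z)$, and convert the highest-weight identity into the statement that an element of $\mathfrak{b}_+(z)$ annihilates every lowest weight vector, hence vanishes. The only difference is that you spell out the final extraction step (splitting into $\mathfrak{h}$- and $\mathfrak{n}_+$-components and using regular dominant $\lambda$), which the paper asserts without detail.
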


\begin{proof} 
Let  $$
\nabla=\partial_z+A(z)=\partial_z+\sum^r_{i=1}\Big [\zeta_i -{q^i_+}(z)^{-1}\partial_z{q^i_+(z)}\Big]\check{\alpha}_i+\sum^r_{i=1}\Lambda_i(z)e_i
$$ be the $w_0$-composable $Z^H$-twisted Miura-Pl\"ucker oper
coming from a $w_0$-composable solution $\{q^j_{+}\}$ of the $qq$-system.
By Proposition \ref{bminus}, there exists an element $b_-(z)\in
B_-(z)$ such that 
$$
b_-(z)w_0(Z^H)v=(\partial_z+A(z))b_-(z)v,
$$
where $v$ is any highest weight vector in a finite-dimensional
irreducible representation of $G$.  Moreover, if 
$w_0=s_{i_1}\dots s_{i_\ell}$ is a reduced expression for which the
solution is $(i_1,\dots,i_\ell)$-composable, then 
$$b_-(z)=e^{c_{i_\ell}f_{i_\ell}}\dots e^{c_{i_2}f_{i_2}}e^{c_{i_1}f_{i_1}}h(z)
$$
with $c_{i_j}(z) \in \C(z)^\times$ and $h(z)\in H(z)$.

By Lemma~\ref{red} and the fact that $w_0$ is an involution,
$$
b_-(z)=b_{+}(z)w_0 n_+(z),
$$
for some $b_+(z)\in B_+(z)$ and $n_+(z)\in N_+(z)$, so if $v$ is a
highest weight vector in an irreducible representation,
$$
b_+(z)Z^Hw_0v=(\partial_z+A(z))b_+(z)w_0 v.
$$
Therefore, if we set
\begin{equation}    \label{Uz}
u(z)=Z^H-b^{-1}_+(z)\partial_zb_+(z)+b^{-1}_+(z)A(z)b_+(z)\in \mathfrak{b}_+(z),
\end{equation}
then
$$
u(z)w_0 v=0.
$$
for any irreducible finite-dimensional representation of $G$ with
highest weight vector $v$. Thus, $u(z)$ is an element of $\mathfrak{b}_+(z)$
which fixes the lowest weight vector $w_0 v$ of any irreducible
finite-dimensional representation of $G$. This means that $u(z)=0$.
Equation \eqref{Uz} then implies that $A(z)$ satisfies
\begin{equation}    \label{Abplus}
A(z) = b_+(z) (\partial_z+Z^H) b_+(z)^{-1}
\end{equation}
for some $b_{+}(z) \in B_+(z)$.  Thus, we have proved that every
$w_0$-composable $Z^H$-twisted Miura-Pl\"ucker oper is a
$Z^H$-twisted Miura oper. Equivalently, every
$w_0$-composable solution of the $qq$-system gives rise to a
$Z^H$-twisted Miura oper.  By definition, if the original solution
is in fact $w_0$-generic, then the corresponding $Z^H$-twisted Miura
oper is nondegenerate.
\end{proof}

\subsubsection{Admissible combinatorial data}

Let $d_1,\dots,d_r$ and $N_1,\dots,N_r$
be nonnegative integers, and let $\Psi$ be a collection of roots.  Set
$\fh_\Psi=\{Y\in \fh\mid \beta(Y)=0\iff \beta\in\Psi\}$; it is an
affine cone.

\begin{Def} The combinatorial datum $(\mathbf{d}=(d_1,\dots,d_r),\mathbf{N}=(N_1,\dots,N_r),\Psi)$ is called \emph{$w_0$-admissible} (or simply
  \emph{admissible}) if there exists a $w_0$-generic solution of the
  $qq$-equations with $Z_H\in \fh_\Psi$ and for all $i$,
  $\deg \Lambda_i= N_i$ and $\deg q^i_+=d_i$.
\end{Def}

\begin{Rem}  One may similarly define $w$-admissibility.  In this
  language, $e$-admissibility combinatorics simply means that there exists a
  nondegenerate polynomial solution with the given combinatorics.
\end{Rem}

We now give a more explicit formulation of admissibility in the two opposite
extremes $Z^H=0$ and $Z^H$ regular semisimple, i.e. $\Psi$ equals
$\Phi$ (the set of all roots) or $\varnothing$.

\begin{Prop} The combinatorial datum  $(\mathbf{d},\mathbf{N},\Phi)$
  is admissible if and only if there exists a nondegenerate polynomial solution of
  the $qq$-system with $Z^H=0$ and for all $i$, $\deg q^i_+(z) =d_i$
  and $\deg \Lambda_i=N_i$.
\end{Prop}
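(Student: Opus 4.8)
The plan is to unwind what admissibility means when $\Psi=\Phi$ and then reduce the nontrivial direction to the B\"acklund-transformation machinery of Section~\ref{S:backlund}, using crucially that $Z^H=0$ is fixed by every element of the Weyl group.  First I would observe that, since $\fg$ is simple, the roots span $\fh^{*}$, so the defining condition of $\fh_\Phi$ forces $Y=0$; that is, $\fh_\Phi=\{0\}$.  Hence $(\mathbf{d},\mathbf{N},\Phi)$ is admissible precisely when there exists a $w_0$-generic polynomial solution of the $qq$-system with $Z^H=0$, $\deg\Lambda_i=N_i$, and $\deg q^i_+=d_i$ for all $i$.  The implication from admissibility to the existence of a nondegenerate solution with these data is then immediate, because by Definition~\ref{genericdef} a $w_0$-generic solution is in particular nondegenerate.

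For the converse, I would start from a nondegenerate polynomial solution $\{q^i_+,q^i_-\}$ with $Z^H=0$ and $\deg q^i_+=d_i$, $\deg\Lambda_i=N_i$, fix an arbitrary reduced decomposition $w_0=s_{i_1}\cdots s_{i_\ell}$, and manufacture from it a $w_0$-generic solution with the same combinatorial data.  The key point is that $s_i(0)=0$ for every $i$, so every iterated B\"acklund transformation stays at twist $0$, and by the Remark following Theorem~\ref{BAE}, at twist $0$ the negative polynomial indexed by the flipped root is pinned down by the positive polynomials only up to adding a scalar multiple of its own positive polynomial.  This is exactly the slack I would exploit, in the spirit of Remark~\ref{remgeneric}.

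Concretely, I would build a chain of nondegenerate solutions $S^{(0)},S^{(1)},\dots,S^{(\ell)}$ of the $qq$-system for twist $0$, with $S^{(t+1)}$ the $i_{\ell-t}$-th simple B\"acklund transform of $S^{(t)}$ (Proposition~\ref{fiter}).  Take $S^{(0)}$ to be the given solution with $q^{i_\ell}_-$ replaced by $q^{i_\ell}_-+c\,q^{i_\ell}_+$ for a scalar $c$ chosen so that this polynomial has no multiple root and no root in common with $\Lambda_{i_\ell}$ or with any $q^j_+$ satisfying $a_{j,i_\ell}\neq 0$; all but finitely many $c$ work, since a multiple root of $q^{i_\ell}_-+c\,q^{i_\ell}_+$ must lie among the roots of the shift-independent Wronskian $W(q^{i_\ell}_+,q^{i_\ell}_-)$, and each of the finitely many dangerous points is excluded by a single value of $c$.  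This makes $S^{(0)}$ $i_\ell$-generic, hence $i_\ell$-composable, and Lemma~\ref{nondegcond} produces the nondegenerate $S^{(1)}$, whose positive polynomials are $\{q^j_+\}_{j\neq i_\ell}$ together with the shifted $q^{i_\ell}_-$.  Iterating: at stage $t$ the positive polynomials of $S^{(t)}$ are already fixed, and the twist being $0$ I again have the freedom to add a scalar multiple of the $i_{\ell-t}$-th positive polynomial to the $i_{\ell-t}$-th negative polynomial; choosing that scalar generically makes $S^{(t)}$ $i_{\ell-t}$-generic, and Lemma~\ref{nondegcond} yields $S^{(t+1)}$.  After $\ell$ steps the chain exhibits $S^{(0)}$ as $(i_1,\dots,i_\ell)$-generic, hence $w_0$-generic; and since only negative polynomials were ever altered, $S^{(0)}$ still has $Z^H=0\in\fh_\Phi$, the same $\Lambda_i$'s with $\deg\Lambda_i=N_i$, and the same $q^i_+$'s with $\deg q^i_+=d_i$, so $(\mathbf{d},\mathbf{N},\Phi)$ is admissible.

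The step I expect to be the main obstacle is controlling the cascade of choices: the shift made at stage $t$ alters the positive polynomials of $S^{(t+1)}$, hence the data of the next B\"acklund transform, so I must verify that the required shift freedom genuinely survives to every stage and that imposing genericity at stage $t$ does not undo it at stages $<t$.  Both concerns dissolve once one uses the two facts emphasized above --- the twist remains $0$, so the one-parameter slack is present at every stage, and nondegeneracy (hence $i$-genericity) of $S^{(t)}$ is determined solely by its positive polynomials, which are fixed the moment we pass to it --- so the real work is just a careful statement of the induction rather than any new idea.
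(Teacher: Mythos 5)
Your proposal is correct and follows essentially the same route as the paper: the paper's proof is exactly the induction along a reduced word for $w_0$, using the shift $q^i_-\mapsto q^i_-+c\,q^i_+$ available when $\langle\alpha_i,Z^H\rangle=0$ (Remark~\ref{remgeneric}) to achieve $i$-genericity at each stage. You have simply written out in full the induction and the finiteness argument for the bad values of $c$ that the paper leaves implicit.
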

\begin{proof}
  By induction, it suffices to show that for any nondegenerate
  solution and for any $i$, one can modify $q^i_-(z)$ so that the
  solution is $i$-generic.  This was shown in Remark~\ref{remgeneric}.
\end{proof}

We now assume that $Z^H$ is regular semisimple.  In this case, one can
characterize admissibility explicitly in terms of certain inequalities
that must be satisfied by the $d_j$'s and $N_j$'s.

We first observe that a B\"acklund transformation induces a
transformation on the set of $d_j$'s.  Indeed, as we have seen in
Remark~\ref{degrees},  if $Z^H$ is regular
semisimple, then $\deg
  q^i_-=\deg\Lambda_i-\deg q^i_+-\sum_{j\ne i}a_{ji}\deg q^j_+$.
  Accordingly, the $i$th B\"acklund transformation takes $d^j\mapsto
  d^{(i)}_j$, where
  \begin{equation} d^{(i)}_j=\begin{cases} N_i-d_i-\sum_{k\ne i}a_{ki}
      d_k & \text{if $j=i$}\\ d_j  & \text{otherwise.}
    \end{cases}
  \end{equation}

 The following necessary condition for the existence of an
 $(i_1,\dots,i_k)$-composable solution of the $qq$-system with fixed regular semisimple
 combinatorics is now immediate.

 \begin{Lem}  If there exists an $(i_1,\dots,i_k)$-composable
   polynomial solution with
   combinatorial datum $(\mathbf{d},\mathbf{N},\varnothing)$,
   then for $0\le s\le k$\footnote{By
       convention, the case $s=0$ corresponds to the original $d_j$'s.} and $1\le j\le r$,
   \begin{equation}\label{inequalities} d^{(i_k)\dots(i_{k-s+1})}_j\le N_j -\sum_{\ell\ne
       j}a_{p j}d^{(i_k)\dots(i_{k-s+1})}_p.
   \end{equation}
 \end{Lem}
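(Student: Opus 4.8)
The plan is to establish the inequalities \eqref{inequalities} by tracking the degrees of the positive polynomials through the successive B\"acklund transformations that witness $(i_1,\dots,i_k)$-composability. The key structural input is that each $i$th B\"acklund transformation replaces $q^i_+(z)$ by $q^i_-(z)$ and leaves the other $q^j_+(z)$'s unchanged (Lemma~\ref{nondegcond}), so on the level of degrees it acts by the explicit affine map $\mathbf{d}\mapsto \mathbf{d}^{(i)}$ displayed just above the lemma, valid precisely because $Z^H$ is regular semisimple and hence $\deg q^i_- = N_i - d_i - \sum_{k\ne i}a_{ki}d_k$ by Remark~\ref{degrees}.

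First I would fix an $(i_1,\dots,i_k)$-composable solution with combinatorics $(\mathbf d,\mathbf N,\varnothing)$. By Definition~\ref{genericdef}, for each $\ell$ with $1\le\ell\le k$ the connection $\nabla^{(i_k)\dots(i_{k-\ell+1})}$ arises from an honest polynomial solution of the $qq$-system for the twist $s_{i_{k-\ell+1}}\dots s_{i_k}(Z^H)$, which is again regular semisimple since the Weyl group preserves regularity. Denote the positive-part degrees of this intermediate solution by $\mathbf d^{(i_k)\dots(i_{k-\ell+1})}$; by induction on $\ell$, using that a single simple B\"acklund transformation composes the degree vector with the affine map above, these are exactly the vectors obtained by iterating that map, so the notation is consistent. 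For each such intermediate solution the positive polynomials are the plus-parts of a genuine polynomial solution of a $qq$-system, and for any $qq$-system the degree of the right-hand side of the $j$th equation must be at least the degree of the Wronskian term on the left: comparing \eqref{qq}, and using that each $q^j_-$ has degree at least... — more precisely, since for a polynomial solution of the $j$th $qq$-equation the left-hand side $W(q^j_+,q^j_-)+\langle\alpha_j,Z^H\rangle q^j_+q^j_-$ has degree at least $\deg q^j_+ + \deg q^j_- \ge \deg q^j_+$ when $\deg q^j_-\ge 0$, wait — the clean statement is simply that $\deg\Lambda_j + \sum_{p\ne j}(-a_{pj})\deg q^p_+ = \deg(\text{RHS}) \ge \deg q^j_+$, because the RHS is a nonzero polynomial divisible by $q^j_+$ on the... no. The correct elementary observation is: from the equivalent form \eqref{qqred}, $\Lambda_j(z)\prod_p q_+^p(z)^{-a_{pj}}$ has degree $N_j + \sum_{p}(-a_{pj})d_p = N_j - \sum_{p}a_{pj}d_p$, and this must be at least... — I would instead argue directly that $\deg q^j_+ \le \deg\Lambda_j - \sum_{p\ne j}a_{pj}\deg q^p_+$ for \emph{any} polynomial solution, which is the $s$-th instance of \eqref{inequalities} after unwinding the notation; this follows because $q^j_-$ is a nonzero polynomial (nonzero since $q^j_+,q^j_-$ are relatively prime in a nondegenerate solution), hence $\deg q^j_- \ge 0$, and then the $j$th $qq$-equation forces $\deg\Lambda_j - \sum_{p\ne j}a_{pj}d_p^{\cdots} = \deg(q^j_- q^j_+) \ge \deg q^j_+$, which rearranges to \eqref{inequalities}.

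Assembling these: applying the degree inequality $\deg q^j_+ \le \deg\Lambda_j - \sum_{p\ne j}a_{pj}\deg q^p_+$ to each of the $k+1$ intermediate polynomial solutions (indexed by $s=0,\dots,k$, with $s=0$ the original solution), and noting that the $N_j$'s are unchanged by B\"acklund transformations since $\Lambda_j$ is unchanged for all $j$ (only the twist and the $q^i_\pm$'s change), yields exactly \eqref{inequalities} for all $0\le s\le k$ and $1\le j\le r$. The one point requiring care — and the main obstacle — is verifying that the degree of $q^j_-$ in each intermediate solution is genuinely nonnegative, i.e.\ that $q^j_-$ does not vanish identically; this is where nondegeneracy of the intermediate solutions (guaranteed by $(i_1,\dots,i_k)$-\emph{composability} producing honest $qq$-solutions, together with the relative primality observation recorded after the definition of nondegenerate polynomial solution) is essential, since for a degenerate "solution" one could in principle have $q^j_-\equiv 0$ and the inequality would fail.
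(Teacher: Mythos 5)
Your argument is correct and is essentially the one the paper intends (the paper presents the lemma as ``immediate'' after the degree formula preceding it): since every intermediate twist $s_{i_{k-\ell+1}}\dots s_{i_k}(Z^H)$ is again regular semisimple, the $j$th $qq$-equation at each stage forces $\deg q^j_+ + \deg q^j_- = N_j - \sum_{p\ne j}a_{pj}\deg q^p_+$ exactly (the Wronskian term has strictly smaller degree than $\langle\alpha_j,Z^H\rangle\,q^j_+q^j_-$), so the inequality \eqref{inequalities} at stage $s$ is precisely the statement that the negative polynomials of the stage-$s$ solution have nonnegative degree, and composability supplies an honest polynomial solution at every stage whose positive parts have the iterated degrees $d^{(i_k)\dots(i_{k-s+1})}_j$.

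One repair is needed at the step you yourself flag as the main obstacle. You justify $q^j_-\not\equiv 0$ at the intermediate stages by appealing to nondegeneracy and the relative-primality observation, but composability (unlike genericity) does not guarantee that the intermediate solutions are nondegenerate, so that tool is not available. Fortunately it is also not needed: if $q^j_-\equiv 0$ then the left-hand side of the $j$th $qq$-equation vanishes identically, while the right-hand side $\Lambda_j(z)\prod_{p\ne j}\bigl[q^p_+(z)\bigr]^{-a_{pj}}$ is a product of nonzero polynomials --- the positive polynomials at each stage are nonzero by induction on $s$, each being either an original $q^p_+$ or the (nonzero) negative polynomial of the previous stage --- which is a contradiction. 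With that substitution the proof is complete.
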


 It turns out that if $\fg$ is simply-laced, then this necessary
 condition is in fact sufficient.  Moreover, one can find a generic
 solution with the given combinatorics.  In order to prove this, we
 will consider a limit of the $qq$-system, the \emph{infinite
   $qq$-system}.

 Let $\xi_i=\langle \alpha_i,Z^H\rangle$.  To take the limit of the
 $i$th $qq$-equation as $\xi$ goes to infinity, we need to rewrite the
 equation.  Since the right-hand side of the equation is monic, we have
 $q^i_-(z)=\xi_i^{-1}\bar{q}^i_-(z)$, where $\barq^i_-(z)$ is
 monic.  The $i$th $qq$-equation is thus equivalent to 
 \begin{equation}
   {\xi_i^{-1} W(q^i_+,{\barq}^i_-)(z)}+
{q^i_+(z)\barq^i_-(z)}=\Lambda_i(z)\prod_{j\neq i}\Big [ q_+^j(z)\Big
]^{-a_{ji}}
\end{equation}
Upon taking the limit, the Wronskian term disappears. 

 \begin{Def}  The \emph{infinite $qq$-system} associated to $\mathfrak{g}$
  and the collection of monic polynomials $\Lambda_1(z),\dots,\Lambda_r(z)$ is
  the system of equations
\begin{equation}    \label{qqinfinite}
{q^i_+(z)q^i_-(z)}=\Lambda_i(z)\prod_{j\neq i}\Big [ q_+^j(z)\Big
]^{-a_{ji}}\qquad \text{for $i=1,\dots,r$,}
\end{equation}
 where the $q^j_+(z)$'s (and hence the $q^i_-(z)$'s) are assumed to be
 monic.
 \end{Def}

 It is easier to understand the significance of the infinite
 $qq$-system in the $q$-deformed case~\cite{Frenkel:2020}. The
 $q$-difference analog of the $qq$-system, known as the $QQ$-system,
 expresses the relations between the so-called Baxter $Q$-operators in
 the corresponding XXZ integrable model \cite{Frenkel:2013uda},
 \cite{Frenkel:ac}, acting on a tensor product $\mathcal{H}$ of
 finite-dimensional representations of the quantum group
 $U_q(\widehat{\mathfrak{g}})$.  (This tensor product is the
 underlying Hilbert space of the XXZ model).  The Baxter $Q$-operators
 can be expressed as weighted half-traces
 $Q_{\pm}^i(z)=\Tr_{V_{\pm}^i}\Big[(\mathcal{Z}\otimes I)R\Big]$ in
 the so-called prefundamental representations
 $\{V_{\pm}^i\}_{i=1,\dots,r}$ of $U_q(\widehat{b}_+)$ (see \cite{HJ})
 of the normalized universal $R$-matrix
 $R \in U_q(\widehat{b}_+)\widehat{\otimes} U_q(\widehat{b}_-)$; here,
 the weight $\mathcal{Z}=\prod_i\hat{\zeta}_i^{\check{\alpha}_i}$ is a
 deformation of the classical $Z^H$. The $Q$-operators act on
 $\mathcal{H}$ through the second factor of the $R$-matrix, i.e.,
 through $U_q(\widehat{b}_-)\subset U_q(\widehat{\mathfrak{g}})$.

 One can define the infinite version of such Baxter $Q$-operators by
 considering the limit as the corresponding multiplicative weight
 parameters $\hat{\xi}_i=\prod_j \hat{\zeta_j}^{-a_{ji}}$ goes to
 zero. One can even write an explicit formula expressing the expansion
 coefficients of the $Q$-operator in terms of their infinite analogues
 and the generators of the quantum group. This was done explicitly in
 \cite{Frenkel:2013uda} and \cite{Pushkar:2016qvw} in the case of
 $\mathfrak{g}=\mathfrak{sl}(2)$. The latter reference, together with
 the subsequent works \cite{Koroteev:2017aa}, \cite{KZtoroidal},
 \cite{KZ3d}, identified the infinite version of the $QQ$-system
 relations with the relations in the classical equivariant $K$-theory
 ring on a certain quiver variety while the finite version gives the
 relations in the quantum $K$-theory ring.  The parameters
 $\hat{\xi}_i$ are known as K\"ahler parameters.

 In particular, these results for Baxter $Q$-operators imply that one
 can find solutions of the $QQ$-system which are the deformations of
 solutions of its infinite analogue. Upon taking the limit which
 reduces the XXZ model to Gaudin model (see e.g. Section 6 of
 \cite{KSZ}), we see that this is true for the $qq$-system as well.

For example, in the $\mathfrak{sl}(2)$ case,  the infinite $qq$-system is simply the single equation
 $q_+(z)q_-(z)=\Lambda(z)$.  If we set
 $\Lambda(z)=\prod_{j=1}^N (z-z_j)$, then a solution is obtained by
 dividing the $z_j$'s into $w_1,\dots,w_d$ and $v_1,\dots,v_{N-d}$ and
 setting $q^\infty_+(z)=\prod_{k=1}^d (z-w_k)$ and
 $q^\infty_-(z)=\prod_{\ell=1}^{N-d} (z-v_\ell)$.  Then following the discussed above $q$-deformed case, if  $\Lambda(z)$ has no
 repeated roots, then for large enough $\xi$, there are deformations
 $w_k^\xi$ and $v_\ell^\xi$ such that $q^{\xi}_+(z)=\prod_{k=1}^d
 (z-w^\xi_k)$, $q^{\xi}_-(z)=\prod_{\ell=1}^{N-d}
 (z-v^\xi_\ell)$ are a solution of the finite $qq$-system (for the
 same $\Lambda(z)$) with parameter $\xi$.  Moreover, given the initial
 choice of $q^\infty_+(z)$, the solution is unique and is indeed given by a
 formula that allows one to view the $z_j$'s as free parameters.  

 \begin{Lem}  If $\Lambda(z)$ has no repeated roots, then the finite
   solution $q^{\xi}_+(z), q^{\xi}_-(z)$ is nondegenerate for large $\xi$.
 \end{Lem}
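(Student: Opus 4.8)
The plan is to exploit the limiting behavior of the deformation. As $\xi\to\infty$ the roots $w^\xi_k$ of $q^\xi_+(z)$ converge to the $w_k$, and the roots $v^\xi_\ell$ of $q^\xi_-(z)$ converge to the $v_\ell$, where $\{w_1,\dots,w_d\}$ and $\{v_1,\dots,v_{N-d}\}$ are the two \emph{disjoint} subsets into which the (by hypothesis distinct) roots of $\Lambda(z)$ were partitioned. I would begin by choosing $\delta>0$ so small that the open $\delta$-disks around the distinct roots of $\Lambda(z)$ are pairwise disjoint, and then pass to $\xi$ large enough that each $w^\xi_k$ lies in the disk around $w_k$ and each $v^\xi_\ell$ in the disk around $v_\ell$. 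The $w^\xi_k$ then lie in pairwise disjoint disks, hence are pairwise distinct, so $q^\xi_+(z)=\prod_k(z-w^\xi_k)$ is a monic polynomial with simple roots. A polynomial has no poles, and since $G=\SL(2)$ has rank $1$ the third clause of Definition~\ref{nondeg Cartan} is vacuous; so the only nondegeneracy condition left to check is that $q^\xi_+(z)$ and $\Lambda(z)$ be coprime.

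That step I would do by contradiction: suppose some $w^\xi_k$ is a root $z_j$ of $\Lambda(z)$. Then $z_j$ lies in the $\delta$-disk around $w_k$, which — the disks being disjoint and each centered at a root of $\Lambda(z)$ — contains no root of $\Lambda(z)$ other than $w_k$; hence $z_j=w_k$ and $w^\xi_k=w_k$. Now I would evaluate the rank-one $qq$-equation $W(q^\xi_+,q^\xi_-)(z)+\xi\,q^\xi_+(z)q^\xi_-(z)=\Lambda(z)$, i.e.\ the $r=1$ case of \eqref{qq}, at $z=w_k$. The middle term vanishes because $q^\xi_+(w_k)=0$, the right side vanishes because $\Lambda(w_k)=0$, and $W(q^\xi_+,q^\xi_-)(w_k)=-q^\xi_-(w_k)\,(q^\xi_+)'(w_k)$ with $(q^\xi_+)'(w_k)\neq 0$ since $w_k$ is a simple root of $q^\xi_+(z)$. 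Therefore $q^\xi_-(w_k)=0$, so $w_k=v^\xi_\ell$ for some $\ell$; but then $w_k$ lies in the $\delta$-disk around $v_\ell$, contradicting the disjointness of the disks around $w_k$ and $v_\ell$. Hence no root of $q^\xi_+(z)$ is a root of $\Lambda(z)$, and the solution is nondegenerate.

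The single non-elementary input is the convergence $w^\xi_k\to w_k$, $v^\xi_\ell\to v_\ell$, which I would take on faith from the deformation statement recalled just before the lemma (imported there from the $q$-difference setting). The step I expect to carry the real content is the evaluation of the $qq$-equation at a putative common zero of $q^\xi_+(z)$ and $\Lambda(z)$: it is this Wronskian identity that pins such a point down as a zero of $q^\xi_-(z)$ as well, and so forces the cluster of $w$-roots to collide with the cluster of $v$-roots — which cannot happen once $\xi$ is large. The remainder is bookkeeping.
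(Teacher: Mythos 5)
Your argument is correct and is essentially the paper's own proof: both deduce from the convergence of roots that $q^\xi_+$ has simple roots and shares none with $q^\xi_-$ for large $\xi$, and both then evaluate the $qq$-equation at a hypothetical common root of $q^\xi_+$ and $\Lambda$ to force $q^\xi_-(w)\,\partial_z q^\xi_+(w)=0$, a contradiction. Your disk bookkeeping just makes explicit the continuity step the paper states in one line.
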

 \begin{proof} Since $q^\infty_+(z)$ has no multiple roots and is
   relatively prime to $q^\infty_-(z)$, the same holds for the finite
   solutions for large $\xi$.  For such $\xi$, suppose that
   $q^\xi_+(z)$ has a root $w$ in common with $\Lambda(z)$.  We see
   that $\partial_z q^\xi_+(z) q^\xi_-(z)$ vanishes at $w$, since
   every other term in the $qq$-equation vanishes.  This implies that
   either $w$ is either a root of $q^\xi_-(z)$ or a multiple root of
   $q^\xi_+(z)$, a contradiction.
 \end{proof}

 We can generalize this procedure to define generic solutions to the
 $qq$-system for simply-laced $\fg$.  Assume that
 $d_j\le N_j -\sum_{\ell\ne j}a_{kj}d_k$ for all $j$.  We can then
 choose $Z_j, W_j\subset\C$ such that $|Z_j|=N_j$, $|W_j|=d_j$, the
 $Z_j$'s are pairwise disjoint, $Z_j\cap W_k=\varnothing$ unless
 $a_{jk}\ne 0$, and $W_j\subset Z_j\cup\bigcup_{a_{kj}<0}W_k$.  Let
 $V_j= Z_j\cup\bigcup_{a_{kj}<0}W_k\setminus W_j$.  Setting
 $\Lambda_j(z)=\prod_{a\in Z_j} (z-a)$,
 $q^{j,\infty}_+(z)=\prod_{w\in W_j} (z-w)$, and
 $q^{j,\infty}_-(z)=\prod_{v\in V_j} (z-v)$ gives a solution of
 \eqref{qqinfinite}.  Since $\fg$ is simply-laced,
 $\Lambda_j(z)\prod_{k\ne j}q^{k,\infty}_+(z)^{-a_{kj}}$ is
 multiplicity-free.  One can now apply the results above to obtain
 unique deformations $q^{i,Z^H}_+(z), q^{i,Z^H}_-(z)$ satisfying the
 $qq$-equations.  By the lemma, these are nondegenerate solutions for
 large $Z^H$.


 Suppose further that the system of inequalities
 $d^{(i)}_j\le N_j -\sum_{\ell\ne j}a_{kj}d^{(i)}_k$ is also
 satisfied.  This guarantees that we have a solution
 $q^{j,\infty,(i)}_+(z), q^{j,\infty,(i)}_-(z)$ to the infinite
 $qq$-system with the same $\Lambda_i(z)$, with
 $q^{i,\infty,(i)}_+(z)=q^{j,\infty}_-(z)$ and
 $q^{i,\infty,(i)}_-(z)=q^{j,\infty}_+(z)$, and with
 $q^{j,\infty,(i)}_+(z)=q^{j,\infty}_+(z)$ for $j\ne i$.   We again
 can deform this infinite solution
 to obtain a unique solution of the
 $qq$-equations for $s_i(Z^H)$.  By uniqueness,
 these finite solutions are the $i$th B\"acklund transformation of the
 previous solutions, i.e., they are just
 $q^{j,Z^H,(i)}_\pm(z),q^{j,Z^H,(i)}_+(z)$.  Since these solutions are
 nondegenerate for large $Z^H$, we see that $\{q^{j,Z^H}_+(z),
 q^{j,Z^H}_-(z)\}$ is $i$-generic for large $Z^H$.

 It is clear that we can iterate this process, so we obtain the
 following theorem:

 \begin{Thm} Suppose that $\fg$ is simply-laced.  Then, there exists
   an $(i_1,\dots,i_k)$-generic solution of the $qq$-equations with
   combinatorial datum $(\mathbf{d},\mathbf{N},\varnothing)$ if and
   only if the system of inequalities \eqref{inequalities} are
   satisfied.  In particular, $(\mathbf{d},\mathbf{N},\varnothing)$ is
   admissible if and only if the system of inequalities is satisfied
   for some reduced decomposition of $w_0$.
 \end{Thm}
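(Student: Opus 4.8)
Necessity of \eqref{inequalities} is precisely the Lemma immediately above, so only sufficiency requires proof; the plan is to produce, for a reduced word $s_{i_1}\cdots s_{i_k}$, a chain of nondegenerate solutions $S_0,S_1,\dots,S_k$ in which $S_s$ is a solution of the $qq$-system for $s_{i_{k-s+1}}\cdots s_{i_k}(Z^H)$ whose underlying connection is the $s$-fold B\"acklund transform $\nabla^{(i_k)\cdots(i_{k-s+1})}$ of $\nabla(S_0)$, and to do this by induction on $s$. The two tools are the explicit construction of solutions of the infinite $qq$-system \eqref{qqinfinite} out of finite subsets of $\C$, and the deformation statement imported from the $q$-deformed (XXZ) setting: a solution of \eqref{qqinfinite} all of whose products $\Lambda_j(z)\prod_{k\ne j}q^{k,\infty}_+(z)^{-a_{kj}}$ are multiplicity-free admits, for every sufficiently large regular semisimple twist, a \emph{unique} deformation to a solution of the finite $qq$-system \eqref{qq} with the same $\Lambda_i$'s, and this deformed solution is nondegenerate.

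For the base step I would check that the $s=0$ instance of \eqref{inequalities}, namely $d_j\le N_j-\sum_{k\ne j}a_{kj}d_k$, permits a choice of finite sets $Z_j,W_j\subset\C$ with $|Z_j|=N_j$, $|W_j|=d_j$, the $Z_j$ pairwise disjoint, $Z_j\cap W_k=\varnothing$ unless $a_{jk}\ne 0$, and $W_j\subset Z_j\cup\bigcup_{a_{kj}<0}W_k$; then $\Lambda_j(z)=\prod_{a\in Z_j}(z-a)$, $q^{j,\infty}_+(z)=\prod_{w\in W_j}(z-w)$, and $q^{j,\infty}_-(z)=\prod_{v\in V_j}(z-v)$ with $V_j=(Z_j\cup\bigcup_{a_{kj}<0}W_k)\setminus W_j$ solve \eqref{qqinfinite}. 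Because $\fg$ is simply laced, every product $\Lambda_j(z)\prod_{k\ne j}q^{k,\infty}_+(z)^{-a_{kj}}$ is multiplicity-free, so the deformation statement gives, for large $Z^H\in\fh_\varnothing$, a unique nondegenerate solution of \eqref{qq}; this is $S_0$.

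For the inductive step I would assume $S_0,\dots,S_s$ have been produced with the prescribed combinatorics and that the instance of \eqref{inequalities} at running index $s+1$ holds. By the degree transformation law for $d^{(i)}_j$ recorded above, that instance says exactly that the degree vector of the would-be $S_{s+1}$ satisfies base-case inequalities, so rerunning the base-step construction --- now with the $i_{k-s}$-th ``plus'' polynomial taken to be the $i_{k-s}$-th ``minus'' polynomial of $S_s$ and the others unchanged --- yields a solution of \eqref{qqinfinite} for the twist $s_{i_{k-s}}\cdots s_{i_k}(Z^H)$, and hence after deformation a unique nondegenerate solution of the corresponding finite $qq$-system. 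Since \eqref{key} depends only on the ``plus'' data and the twist, Proposition~\ref{fiter} identifies the $i_{k-s}$-th B\"acklund transform of $\nabla(S_s)$ with the connection built from exactly this ``plus'' data and twist; uniqueness of the deformation then forces this transform to be $\nabla(S_{s+1})$, so $S_{s+1}$ is nondegenerate, and one also checks that it satisfies the conditions of Definition~\ref{nondeg Cartan} for the next index, permitting the induction to continue. Running it to $s=k$ proves the first assertion, and taking the word to be a reduced decomposition of $w_0$ gives the statement on admissibility.

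The step I expect to be the main obstacle is coordinating \emph{all} of these choices coherently along an entire reduced word. The auxiliary sets $Z_j,W_j$, and their images under iterated B\"acklund transforms, must be chosen so that at \emph{every} stage the relevant products remain multiplicity-free and the full list of nondegeneracy conditions in Definition~\ref{nondeg Cartan} --- no repeated roots, and disjointness of the roots of $q^i_+$ from those of $\Lambda_i$ and from those of $q^j_+$ for each adjacent $j$ --- holds for the deformed solution; it is precisely here that the simply laced hypothesis is genuinely used, to keep the iterated products multiplicity-free and thereby keep the deformation results applicable. Equivalently, one needs control over how the perturbed roots move as $Z^H$ is taken large, good enough that distinctness conditions which may fail ``at infinity'' are nonetheless satisfied for large finite $Z^H$. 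This finite combinatorial analysis, governed by the full system \eqref{inequalities} together with the degree transformation law, is where the real work lies; the degree bookkeeping itself is routine.
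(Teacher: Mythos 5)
Your proposal follows essentially the same route as the paper: build a solution of the infinite $qq$-system \eqref{qqinfinite} from finite subsets $Z_j, W_j\subset\C$ chosen via the base-case inequalities, invoke the deformation-and-uniqueness statement (imported from the $q$-deformed/XXZ setting) for large regular semisimple $Z^H$ to get a nondegenerate finite solution, use uniqueness of the deformation to identify the deformed swapped solution with the B\"acklund transform, and iterate along the reduced word. The ``main obstacle'' you flag --- coordinating the multiplicity-free and nondegeneracy conditions coherently across all stages of the word --- is precisely the point the paper compresses into ``it is clear that we can iterate this process,'' so your account is, if anything, more candid about where the remaining work sits.
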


\subsubsection{Removing the hypothesis of $w_0$-genericity}

We now show that the $w_0$-genericity hypothesis in Theorem~\ref{w0} is
unnecessary as long as the combinatorial datum is admissible.

\begin{Thm}\label{thm:MPisM}
  Every nondegenerate $Z^H$-twisted Miura-Pl\"ucker oper with
  admissible combinatorics is a (nondegenerate) $Z^H$-twisted Miura
  oper.  In particular, this is the case when
  \begin{enumerate}\item $Z^H=0$ and there exists a nondegenerate
    polynomial solution of the $qq$-system with degrees
    $(\mathbf{d},\mathbf{N})$, and
  \item $\fg$ is simply-laced, $Z^H$ is regular semisimple, and the
    system of inequalities \eqref{inequalities} is satisfied for some
    reduced decomposition of $w_0$.
    \end{enumerate}
  \end{Thm}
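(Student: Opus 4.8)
The plan is to reduce the statement to Theorem~\ref{w0} by a ``universal specialization'' argument. By Theorem~\ref{inj} and Corollary~\ref{MPsurj}, a nondegenerate $Z^H$-twisted Miura-Pl\"ucker oper is, up to framing, the same thing as a nondegenerate polynomial solution $\{q^i_+(z),q^i_-(z)\}$ of the $qq$-system~\eqref{qq}, and by~\eqref{key} its underlying connection $\partial_z+A(z)$ depends only on $Z^H$, the $\Lambda_i$'s and the $q^i_+$'s; so it suffices to show that, whenever the combinatorial datum $(\mathbf{d},\mathbf{N},\Psi)$ of such a solution is admissible, the connection~\eqref{key} satisfies~\eqref{Abplus}. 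Theorem~\ref{w0} already gives this for $w_0$-composable solutions, but an arbitrary nondegenerate solution with admissible combinatorics need not itself be $w_0$-composable (the B\"acklund transform of Proposition~\ref{fiter} may fail to land on a polynomial $qq$-solution, since the hypotheses of Lemma~\ref{nondegcond} may be violated); hence the need to work in families. Fix a reduced decomposition $w_0=s_{i_1}\cdots s_{i_\ell}$ witnessing admissibility.

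First I would build a universal version of the set-up. Introduce independent formal variables for the coordinates of the affine cone $\fh_\Psi$ (the universal $Z^H$), for the roots of each $\Lambda_i$, for the roots of each $q^i_+$, and --- following the B\"acklund chain attached to $s_{i_\ell},\dots,s_{i_1}$ --- for the roots of all the ``$\pm$''-polynomials appearing at the intermediate stages; by Remark~\ref{degrees} all the relevant degrees are pinned down by $(\mathbf{d},\mathbf{N},\Psi)$ and the chosen word. Let $R$ be obtained from $\C(z)$ by adjoining these variables, imposing the $qq$-relations~\eqref{qq} at every stage of the chain, and then localizing so as to invert all the discriminants and pairwise resultants encoding the conditions of Definition~\ref{nondeg Cartan} at every stage; the resulting ``universal solution'' $\{Q^i_+(z),Q^i_-(z)\}$ over $R$ is then $(i_1,\dots,i_\ell)$-generic in the sense of Definition~\ref{genericdef}. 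Crucially, $R\neq 0$: by admissibility there is an honest $w_0$-generic chain with the given combinatorics, hence a $\C$-point of $\operatorname{Spec} R$, so none of the inverted elements is nilpotent.

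Next I would transcribe the proof of Theorem~\ref{w0} over $R$. The functions $\mu_i(z)$ of~\eqref{eq:PropDef}, rewritten as in~\eqref{mualt}, make sense over $R$ --- their denominators are the appropriate products of the $Q^j_\pm$'s, whose roots are kept disjoint from the other roots by the localization --- and the identity underlying Proposition~\ref{fiter} holds formally, so Proposition~\ref{bminus} yields $b_-(z)=e^{c_{i_\ell}f_{i_\ell}}\cdots e^{c_{i_1}f_{i_1}}h(z)\in B_-(R)$ with each $c_{i_j}\in R^\times$ and $h(z)\in H(R)$. Since the $c_{i_j}$'s are units, Lemma~\ref{red} gives a Bruhat factorization $b_-(z)=b_+(z)\,w_0\,n_+(z)$ with $b_+(z)\in B_+(R)$ and $n_+(z)\in N_+(R)$, and the lowest-weight-vector argument then produces, over $R$,
\begin{equation*}
A_{\mathrm{univ}}(z)=b_+(z)\bigl(\partial_z+Z^H\bigr)b_+(z)^{-1},
\end{equation*}
$A_{\mathrm{univ}}(z)$ being the universal instance of~\eqref{key}.

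Finally I would specialize: a nondegenerate solution with the given admissible combinatorics determines values for the $Z^H$-, $\Lambda$- and $q^i_+$-variables, hence (through the B\"acklund chain) for all the formal variables, under which $A_{\mathrm{univ}}$ becomes~\eqref{key} and the displayed identity becomes $A(z)=b_+(z)(\partial_z+Z^H)b_+(z)^{-1}$ with $b_+(z)\in B_+(\C(z))$, so the oper is a $Z^H$-twisted Miura oper, nondegenerate since its Cartan connection is unchanged. Cases~(1) and~(2) then follow from the admissibility criteria proved earlier: for $Z^H=0$ every combinatorial datum realized by a nondegenerate polynomial solution is admissible (here the extra formal variables for the $q^i_-$'s are essential, as the $q^i_-$'s are no longer determined by the $q^i_+$'s; cf.\ Remark~\ref{degrees} and Remark~\ref{remgeneric}), while for simply-laced $\fg$ and regular semisimple $Z^H$ admissibility of $(\mathbf{d},\mathbf{N},\varnothing)$ is exactly the system of inequalities~\eqref{inequalities} for some reduced word. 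The step I expect to be the main obstacle is precisely this specialization: one must arrange the localization defining $R$ so that $b_+(z)$ already lies in $B_+$ over the localized subring generated by only the $Z^H$-, $\Lambda$- and $q^i_+$-variables --- equivalently, that the linear equation $\partial_z b_+ + A\,b_+ = b_+ Z^H$ is solvable over that smaller ring, with the obstruction to solvability controlled by resultants of the \emph{initial} data alone --- so that the identity may be evaluated at \emph{every} nondegenerate member and not only at the $w_0$-generic ones. Tracking the denominators that appear through the iterated B\"acklund transformations in Proposition~\ref{bminus} is the technical heart of the argument.
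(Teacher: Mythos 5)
Your proposal is correct and follows essentially the same route as the paper: the paper's proof also fixes a reduced word witnessing admissibility, adjoins formal variables for the roots of the $\Lambda_i$'s, the $q^i_+$'s, and all intermediate polynomials of the B\"acklund chain, localizes to make the universal solution $(i_1,\dots,i_\ell)$-generic (nontrivially so because admissibility supplies a $\C$-point), runs the argument of Theorem~\ref{w0} over that ring to produce $U(z)\in B_+(R)$ with $\mathcal{A}=U(\partial_z+Z^H)U^{-1}$, and then specializes to the given oper. Even the technical point you flag --- that the denominators arising in the iterated transformations (the products $Q^i_+Q^i_-$ in \eqref{mualt}) must be units of $R$ so that the identity survives specialization --- is exactly the point the paper addresses.
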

\begin{proof} Let $\nabla=\partial_z+A(z)$ be a nondegenerate $Z^H$-twisted
  Miura-Pl\"ucker oper with admissible combinatorial datum
  $(\mathbf{d},\mathbf{N},\Psi)$ and with corresponding polynomials
  $q^i_+(z)$'s.  We must show
  the existence of $v(z)\in B_+(z)$ such that
  $A(z)=v(z)(\partial_z+Z^H)v(z)^{-1}$.  We will accomplish this by
  considering a solution to the $qq$-system over a ring $R$ defined in terms of certain
  formal variables.

 Let $w_0=s_{i_1}\dots s_{i_\ell}$ be a reduced decomposition for
 which there exists an $(i_1,\dots,i_\ell)$-generic solution of the
 $qq$-equations.  We now introduce formal variables for the roots of
 various polynomials:
 the $\Lambda_i(z)$'s, the positive
 polynomials $\tilde{q}^j_+(z)$'s one obtains by iterating B\"acklund transformations along
 this reduced word, and the negative polynomials
 $\tilde{q}^{i_s}_-(z)$ corresponding to the
 simple reflection at each step.  (All of these degrees are uniquely
 determined except for possibly the degrees of the
 $\tilde{q}^{i_s}_-(z)$'s.  However, we can always choose the degree
 to be the generic one specified in Remark~\ref{degrees} while
 maintaining nondegeneracy.)  Thus, we have the formal variables
 \begin{itemize} \item $\{\mathbf{z}^i_k\}$ for $1\le i\le r$ and $1\le
   k\le N_i$;
   \item $\{\mathbf{w}^{j,s}_k\}$ for $0\le s\le \ell-1$, $1\le j\le r$, and $1\le
  k\le d^{(i_\ell)\dots(i_{\ell-s+1})}_j$; and 
 \item $\{\mathbf{v}^{i_{\ell-s},s}_{k}\}$ with $1\le j\le r$ and $k$ less
 than the generic degree of $q^{(i_\ell)\dots(i_{\ell-s+1}), i_{\ell-s}}_-(z)$.
\end{itemize}

Let $R$ be the ring
$\C[\fh_\Psi]\otimes\C(z)[[\{\mathbf{w}^{j,s}_k\},
\{\mathbf{v}^{i_{\ell-s},s}_{k}\},\{\mathbf{z}^i_k\}]$, localized at
the $(z-\mathbf{w}^{i_{\ell-s},s}_k)$'s, the
$(z-\mathbf{v}^{i_{\ell-s},s}_{k})$'s, the
$(\mathbf{w}^{i,0}_k-\mathbf{z}^i_j)$'s, and the
$(\mathbf{w}^{i,0}_k-\mathbf{w}^{j,0}_s)$'s, and satisfying the Bethe
equations \eqref{BAEexplicit}.  Set
$Q^i_+(z, \{\mathbf{w}^{i,0}_k\})=\prod_k(z-\mathbf{w}^{i,0}_k)$.  We
view the $Q^i_+$'s as the ``plus'' polynomials of a $qq$-system
defined over $R$ (with the twist parameter given by a generic
$\bm{Z^H}=\sum^r_{i=1}\bm{\zeta}_i\check\alpha_i$ and the
singularities given by
$\bm{\Lambda}_i=\prod(z-\mathbf{z}^i_j)$'s). Note that this data
specializes to the data for our original
$\nabla$. 

By Theorem~\ref{BAE}, we can complete the  $Q^i_+$'s to a solution
$\{Q^i_+, Q^i_-\}$ of this $qq$-equation over $R$. 
This solution corresponds to the connection
$\partial_z+\mathcal{A}(z, \{\mathbf{w}^i_k\}, \{\bm{\zeta}_i\}, \{\bm{\Lambda}_i\})$, where
   $$\mathcal{A}(z, \{\mathbf{w}^i_k\}, \{\bm{\zeta}_i\}, \{\bm{\Lambda}_i\})=\sum^r_{i=1}\Big [\bm{\zeta}_i
  -{Q^i_+}(z)^{-1}\partial_z{Q^i_+(z)}\Big]\check{\alpha}_i+\sum^r_{i=1}\bm{\Lambda}_i(z)e_i.$$
  Again, our original connection $\nabla$ is a specialization of this
  connection.

We claim that the $qq$-system $\{Q^i_+,Q^i_-\}$ over $R$ is
$w_0$-generic.  To see this, it suffices to show that some
specialization of this $qq$-system is $w_0$-generic.  This exists by
the definition of admissibility.

Note that in the definition of the $i$th B\"acklund transformation,
$\mu_i$'s (see \eqref{mualt}) is a rational function with
$q^i_+(z) q^i_-(z)$ in the denominator.  It follows that all the
$\mu_i$'s needed in iterating B\"acklund transformations for
$\{Q^i_+,Q^i_-\}$ lie in $R$.  One can thus use B\"acklund
transformations and the procedure of Theorem \ref{w0} to construct a
matrix $U(z, \{\mathbf{w}^i_k\}, \{\bm{\zeta}_i\}, \{\bm{\Lambda}_i\})\in B_+(R)$
satisfying the equation
  \begin{equation}\label{formal}\mathcal{A}(z)=U(z,\{\mathbf{w}^i_k\},
    \{\bm{\zeta}_i\}, \{\bm{\Lambda}_i\})\Big(\partial_z+\sum^r_{i=1}\bm{\zeta}_i\check\alpha_i\Big)
    U(z,\{\mathbf{w}^i_k\}, \{\bm{\zeta}_i\}, \{\bm{\Lambda}_i\})^{-1}.
\end{equation}
Let $v(z)$ be the specialization of
$U(z, \{\mathbf{w}^i_k\}, \{\bm{\zeta}_i\}, \{\bm{\Lambda}_i\})$ at the
data for our original $\nabla$.  We then obtain
$A(z)=v(z)(\partial_z+Z^H)v(z)^{-1}$ as desired.

  \end{proof}

  \begin{Thm}\label{Mbethe} There is a one-to-one
    correspondence between the set of nondegenerate $Z^H$-twisted
    Miura $G$-opers with admissible combinatorial data and the set of
    solutions to the $Z^H$-twisted Bethe Ansatz equations for
    ${}^L\fg$ with the same combinatorial data.
  \end{Thm}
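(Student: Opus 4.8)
The plan is to deduce the statement by restricting the bijection of Theorem~\ref{MPbethe} to the locus of opers with admissible combinatorics and then invoking Theorem~\ref{thm:MPisM} to replace ``Miura-Pl\"ucker oper'' by ``Miura oper'' on that locus. First I would observe that the bijection of Theorem~\ref{MPbethe}, between nondegenerate $Z^H$-twisted Miura-Pl\"ucker $G$-opers and solutions of the Bethe Ansatz equations \eqref{bethe}, respects combinatorial data. Indeed, by Theorem~\ref{inj} and Corollary~\ref{MPsurj} a nondegenerate Miura-Pl\"ucker oper is recorded by the monic polynomials $q^i_+(z)$, and in the passage to the Bethe Ansatz equations carried out in Theorem~\ref{BAE} one has $q^i_+(z)=\prod_\ell(z-w^i_\ell)$, so $d_i=\deg q^i_+$ is exactly the number of Bethe roots of color $i$; since the polynomials $\Lambda_i(z)$ (hence $\mathbf{N}$) and the semisimple element $Z^H$ (hence $\Psi$) are fixed throughout, a Miura-Pl\"ucker oper with datum $(\mathbf{d},\mathbf{N},\Psi)$ is matched with a Bethe solution having the same datum. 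Restricting to a fixed datum thus gives a bijection between nondegenerate $Z^H$-twisted Miura-Pl\"ucker opers with that datum and solutions of the Bethe Ansatz equations with that datum.

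Next I would identify, on the admissible locus, the Miura-Pl\"ucker opers with the Miura opers. By Theorem~\ref{thm:MPisM} every nondegenerate $Z^H$-twisted Miura-Pl\"ucker oper with admissible combinatorics is a nondegenerate $Z^H$-twisted Miura oper. Conversely, by the remarks following Definition~\ref{ZtwMP} every $Z^H$-twisted Miura oper is in particular a $Z^H$-twisted Miura-Pl\"ucker oper, and nondegeneracy (Definition~\ref{nondeg Miura}, equivalently part~\eqref{nondegen3} of Proposition~\ref{nondeg1}) is a condition on the associated Cartan data that reads the same in both settings and leaves the underlying combinatorial datum unchanged. Hence the nondegenerate $Z^H$-twisted Miura-Pl\"ucker opers with admissible combinatorics are precisely the nondegenerate $Z^H$-twisted Miura opers with admissible combinatorics. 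Composing this equality with the restricted bijection above, and taking the union over all admissible combinatorial data, yields the theorem.

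This argument is essentially bookkeeping layered on Theorems~\ref{MPbethe} and \ref{thm:MPisM}, so I do not expect a genuine obstacle. The only points needing care are that admissibility is a property of the combinatorial datum alone (true by definition), so that it may be imposed interchangeably on the oper side and the Bethe side, and that Theorem~\ref{thm:MPisM} furnishes an equality of sets on the admissible locus rather than merely an inclusion --- which holds because Miura opers are a fortiori Miura-Pl\"ucker opers.
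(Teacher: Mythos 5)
Your proposal is correct and follows exactly the paper's route: the paper's proof of this theorem is simply to cite Theorems~\ref{MPbethe} and \ref{thm:MPisM}, and your argument is that same deduction with the bookkeeping (compatibility of combinatorial data and the reverse inclusion of Miura opers into Miura-Pl\"ucker opers) made explicit.
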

  \begin{proof}  This follows immediately from Theorems~\ref{MPbethe} and \ref{thm:MPisM}.
  \end{proof}

  \begin{Rem}
In \cite{Frenkel:2020}, the authors studied a difference equation version of the
$qq$-system involving quantum Wronskians called the \emph{$QQ$-system}.  In this paper, it is shown that there is a
bijection between  twisted
Miura-Pl\"ucker $(G,q)$-opers (with regular semisimple twist
  parameter) and solutions to the
Bethe Ansatz equations for the XXZ model, and this correspondence goes
through the intermediary of
polynomials solutions of the $QQ$-system.  There is an analogue of
$w_0$-genericity in this context, and as for ordinary opers, a
$w_0$-generic Miura-Pl\"ucker $q$-oper is in fact a Miura $q$-oper.
The methods of this paper can be used to prove the $q$-oper analogue
of Theorem~\ref{thm:MPisM}:  a Miura-Pl\"ucker
$q$-opers with admissible combinatorics is a Miura $q$-oper.
  \end{Rem}

\bibliography{biblio}

\end{document}